%
%

\documentclass[MSNbibl,number,citesort,seceqn,dvips]{arxbj}
\usepackage{graphicx}
%

\aid{0}
\volume{19}
\issue{3}
\pubyear{2013}
\firstpage{1047}
\lastpage{1085}
\doi{10.3150/12-BEJ445} 

\makeatletter
\newcommand{\angler}{\rangle}
\newcommand{\anglel}{\langle}
\newcommand{\rrVert}{\Vert}
\newcommand{\rrvert}{\vert}
\newcommand{\llVert}{\Vert}
\newcommand{\llvert}{\vert}
\newtheorem{theorem}{Theorem}[section]
\newremark{example}[theorem]{Example}
\newremark{remark}[theorem]{Remark}
\newtheorem{lemma}[theorem]{Lemma}
\newtheorem{proposition}[theorem]{Proposition}
\newtheorem{corollary}[theorem]{Corollary}

%
%

\def\bsuffix #1{#1}

\newcommand{\eqref}[1]{(\ref{#1})}
\newcommand{\p}{\mathbb{P}}
\renewcommand{\mid}{\vert}
\newcommand{\ess}{\operatorname{ess}}
\newcommand{\sign}{\operatorname{sign}}
\def\F{\mathcal{F}}

\makeatother

\begin{document}
\begin{frontmatter}

\title{BS$\Delta$Es and BSDEs with non-Lipschitz drivers:
Comparison, convergence and~robustness}
\runtitle{BS$\Delta$Es and BSDEs with non-Lipschitz drivers}

\begin{aug}
\author[1]{\fnms{Patrick} \snm{Cheridito}\thanksref{1}\ead[label=e1]{dito@princeton.edu}} \and
\author[2]{\fnms{Mitja} \snm{Stadje}\corref{}\thanksref{2}\ead[label=e2]{m.a.stadje@uvt.nl}}
\runauthor{P. Cheridito and M. Stadje} 
\address[1]{Princeton University,
Princeton, NJ 08544, USA. \printead{e1}}
\address[2]{Department of Econometrics and Operations Research,
Tilburg University and CentER, 5000 LE Tilburg, The
Netherlands. \printead{e2}}
\end{aug}

\received{\smonth{11} \syear{2010}}
\revised{\smonth{9} \syear{2011}}

%
\begin{abstract}
We provide existence results and comparison principles for
solutions of backward stochastic difference equations
(BS$\Delta$Es) and then prove convergence of these to solutions
of backward stochastic differential equations (BSDEs) when the
mesh size of the time-discretizaton goes to zero. The
BS$\Delta$Es and BSDEs are governed by drivers
$f^N(t,\omega,y,z)$ and $f(t,\omega,y,z),$ respectively. The
new feature of this paper is that they may be non-Lipschitz in
$z$. For the convergence results it is assumed that the
BS$\Delta$Es are based on $d$-dimensional random walks $W^N$
approximating the $d$-dimensional Brownian motion $W$
underlying the BSDE and that $f^N$ converges to $f$. Conditions
are given under which for any bounded terminal condition $\xi$
for the BSDE, there exist bounded terminal conditions $\xi^N$
for the sequence of BS$\Delta$Es converging to $\xi$, such that
the corresponding solutions converge to the solution of the
limiting BSDE. An important special case is when $f^N$ and $f$
are convex in $z.$ We show that in this situation, the
solutions of the BS$\Delta$Es converge to the solution of the
BSDE for every uniformly bounded sequence $\xi^N$ converging to
$\xi$. As a consequence, one obtains that the BSDE is robust in
the sense that if $(W^N,\xi^N)$ is close to $(W,\xi)$ in
distribution, then the solution of the $N$th BS$\Delta$E is
close to the solution of the BSDE in distribution too.
\end{abstract}

%
\begin{keyword}
\kwd{backward stochastic difference equations}
\kwd{backward stochastic differential equations}
\kwd{comparison principle}
\kwd{convergence}
\kwd{robustness}
\end{keyword}

\end{frontmatter}

\section{Introduction}\label{secint}

The aim of this paper is to obtain general convergence results of
solutions of
stochastic backward equations in discrete time (BS$\Delta$Es) to
solutions of
stochastic backward equations in continuous time
(BSDEs). The discrete equations are governed by drivers $f^N(t,\omega,y,z)$,
$N \in\mathbb{N}$, and the continuous one
by $f(t,\omega,y,z)$. The new feature of this paper is that
$f^N$ and $f$ may be non-Lipschitz in $z$. We assume
that the BS$\Delta$Es are based on $d$-dimensional random walks $W^N$
converging
to the $d$-dimensional Brownian motion $W$ underlying the BSDE
and that $f^N$ tends to $f$. Convergence results for Lipschitz drivers
have been obtained by
Briand \textit{et al.}~\cite{4,5} as well as Toldo~\cite{29,30}. In these papers,
existence and uniqueness of solutions follow from a Picard iteration
argument. Using
results on convergence of filtrations from Coquet \textit{et al.}~\cite{13}, it can
be shown
that the Picard sequences approach each other asymptotically, which
yields general
convergence results. In the case of non-Lipschitz drivers this approach
does not work, and
neither the existence of solutions of BS$\Delta$Es nor their
convergence to their counterparts in
continuous time are clear.

In this paper, we start with a careful analysis of BS$\Delta$Es.
Central to our approach is Theorem~\ref{thmcomp} which provides
a comparison principle for BS$\Delta$Es. It requires drivers
that can grow faster than linearly but strictly less than
quadratically in $z$. Kobylanski~\cite{21} showed existence,
comparison and uniqueness of solutions to BSDEs with general
bounded terminal conditions and drivers of quadratic growth in
$z$. However, in discrete time the situation is different.
Example~\ref{ex2} shows that neither a general comparison
principle nor convergence of solutions for diminishing step
sizes can hold for BS$\Delta$Es if the drivers grow
quadratically in $z$. Our main convergence results are Theorems
\ref{thmone} and~\ref{thmtwo}. Theorem~\ref{thmone} shows that
if $f^N$ and $f$ grow less than quadratically in $z$, then for any
bounded terminal condition $\xi$ for the BSDE, there exist bounded terminal
conditions $\xi^N$ for the BS$\Delta$Es such that the corresponding
solutions $Y^N$
converge to the solution $Y$ of the BSDE in the following sense:
%
\begin{equation}\label{convYN}
\sup_{0\le t\le T} \bigl|Y^N_t -
Y_t\bigr| \to 0 \qquad \mbox{in } L^2 \mbox{ when } N \to\infty.
\end{equation}
Furthermore, if $\xi$ is of the form $\xi=\varphi(W_{s_1},\ldots
,W_{s_n})$ for a bounded, uniformly
continuous function $\varphi$, then the $\xi^N$ can be chosen
as $\xi^N = \varphi(W^N_{s_1},\ldots,W^N_{s_n})$. In Theorem
\ref{thmtwo}, we prove that if the drivers $f^N$ are convex in
$z$, then \eqref{convYN} holds for every sequence of
uniformly bounded $\xi^N$ converging to $\xi$ in $L^2$. As a
corollary one obtains that if $(W^N, \xi^N)$ is close to
$(W,\xi)$ in distribution, then $Y^N$ is close to $Y$ in
distribution too.

Discrete schemes for the approximation of solutions of BSDEs
have been studied by a number of authors; see for instance, Ma
\textit{et al.}~\cite{22}, Douglas \textit{et al.}~\cite{16}, Bally~\cite{1},
Chevance~\cite{10}, Coquet \textit{et al.}~\cite{12},
Ma \textit{et al.}~\cite{23}, Zhang and Zheng~\cite{32}, Zhang~\cite{31},
Bouchard and Touzi~\cite{3}, Gobet \textit{et al.}~\cite{18} and Otmani~\cite{25}.
However, in all these papers the
drivers are assumed to be Lipschitz. Recently, Imkeller and
Reis~\cite{20} as well as Richou~\cite{28} have obtained results on the
convergence of solutions of discretized BSDEs with drivers of quadratic growth
under regularity assumptions on the terminal conditions and for
specially chosen discrete-time drivers $f^N$.
In Cheridito and Stadje~\cite{9} convergence results are shown for
convex drivers and terminal conditions that are Lipschitz continuous
in the underlying Brownian motion.
Our results hold for general terminal conditions and general drivers
$f^N$ converging to $f$. But they
need subquadratic growth of $f^N$ in~$z$.
Comparison results for BS$\Delta$Es have also been studied in Cohen
and Elliott~\cite{11}
but under different assumptions than here.

The structure of the paper is as follows: In Section~\ref{secnot}, we
introduce the notation and
provide some background material. Then we give an example showing that
BS$\Delta$Es with non-Lipschitz drivers need not converge if the terminal
conditions are not uniformly bounded. In Section~\ref{secsol}, we
show that\vadjust{\goodbreak}
BS$\Delta$Es admit solutions under very mild assumptions if the
time-discretization is fine enough.
Section~\ref{seccomp} starts with an example showing two facts about
BS$\Delta$Es with drivers of quadratic growth: (a) a general comparison
principle cannot hold
and (b) solutions of BS$\Delta$Es can explode if the step-size goes to
zero even if the terminal
conditions are uniformly bounded and converge to zero in $L^2$. We then
prove a general
comparison principle for subquadratic BS$\Delta$Es. Section \ref
{secconvergence}
gives convergence results of solutions of general BS$\Delta$Es to
solutions of BSDEs,
and in Section~\ref{secvex} we prove convergence results for drivers
that are convex in $z$.

\section{Notation and setup}\label{secnot}

We fix a finite time horizon $T \in\mathbb{R}_+$. As underlying
process for the BSDE, we take a $d$-dimensional Brownian motion
$(W_t)_{t\in[0,T]}$ on a
probability space $(\Omega, \mathcal{F}, \p)$ and denote by $(\F_t)_{t
\in[0,T]}$ the augmented filtration generated by $(W_t)_{t \in
[0,T]}$. Equalities and inequalities between random variables
will, as usual, be understood in the $\p$-almost sure sense.
As approximating processes we consider a sequence
$(W^N_t)_{t\in[0,T]}$, $N \in\mathbb{N}$, of $d$-dimensional
square-integrable martingales on $(\Omega, \mathcal{F}, \p)$
starting at $0$ with independent
increments satisfying the following three conditions:

\begin{enumerate}[(C2)]
\item[(C1)]
For every $N$ there exists a finite sequence $0=t_0^N < t^N_1< \cdots<
t^N_{i_N}=T$ such that
\[
\lim_{N\rightarrow\infty} \sup_i \bigl|t^N_{i+1} -
t^N_i\bigr| = 0
\]
and $W^N_t$ is constant on each of the intervals $[t^N_i ,t^N_{i+1}).$
\item[(C2)]
\[
\Delta\bigl\anglel W^{N,k}\bigr\angler_{t^N_i} = \Delta\bigl
\anglel W^{N,l}\bigr\angler_{t^N_i} > 0\qquad \mbox{for all } i \mbox{ and } 1 \le k,l \le d .
\]
\item[(C3)]
\[
\lim_{N \to\infty} {\mathbb{E}} \Bigl[\sup_{0\le t\le T} \bigl|W^N_t-W_t\bigr|^2
\Bigr] = 0,
\]
where $|\cdot|$ denotes the standard Euclidean norm on $\mathbb{R}^d\dvt
|x| := (\sum_{i=1}^d x_i^2)^{1/2}$.
\end{enumerate}
Let $(\mathcal{F}^N_t)$ be the filtration generated by $(W^N_t)$ and
define $\anglel W^N\angler_t := \anglel W^{N,k}\angler_t$.
Since $W^N$ has independent increments, $\anglel W^N\angler_t = \anglel W^{N,k}\angler_t$ is equal to
${\mathbb{E}} [(W^{N,k}_t)^2 ]$, and it follows from (C3) that
%
\begin{eqnarray}
\label{WNt} \sup_{0 \le t \le T} \bigl|\bigl\anglel W^N\bigr
\angler_t - t\bigr| = \sup_{0 \le t \le T} \bigl\llvert {\mathbb{E}} \bigl[
\bigl(W^{N,k}_t\bigr)^2 - \bigl(W^k_t
\bigr)^2 \bigr]\bigr\rrvert \to0\qquad \mbox{for } N \to\infty.
\end{eqnarray}
In particular,
\[
\lim_{N \rightarrow\infty} \max_i \bigl|\Delta\bigl\anglel W^N\bigr
\angler_{t^N_i}\bigr| = 0.
\]

Our standard example for the approximating processes $W^N$ will be
$d$-dimensional Bernoulli random walks.\vadjust{\goodbreak}

\begin{example} \label{ExBernoulli}
Let
\[
t^N_i = i \frac{T}{N}\quad \mbox{and}\quad
\tilde{W}^{N,k}_{t^N_i} = \sqrt{\frac{T}{N}} \sum
_{j=1}^{i} X^{N,k}_j
\]
for i.i.d. random variables $X^{N,k}_j$ on a probability space
$(\tilde{\Omega}, \tilde\mathcal{F}, \tilde{\p})$ with
distribution $\tilde{\p}[X^{N,k}_j = \pm1] = 1/2$. Extend
$(\tilde{W}^N_t)$ to $[0,T]$ such that it is constant on the
intervals $[t^N_i, t^N_{i+1})$. Then conditions (C1) and (C2)
are satisfied. To fulfill (C3), one must transfer the random
walks to another probability space. Since they converge to
$d$-dimensional Brownian motion in distribution, there exists a
probability space $(\Omega, \mathcal{F}, \p)$ with a
$d$-dimensional Brownian motion $(W_t)$ and random walks
$(W^N_t)$ having the same distributions as $(\tilde{W}^N_t)$
such that
%
\begin{equation}
\label{convergence} \sup_{0\le t\le T} \bigl|W^{N}_t-
W_t\bigr| \to0 \quad\mbox{almost surely} \quad\mbox{as } N \to \infty;
\end{equation}
see, for instance, Theorem I.2.7 in Ikeda and
Watanabe~\cite{19}. It can be shown that the sequence
$\sup_{0\le t\le T} |W^{N}_t- W_t|^2$ is uniformly integrable.
Therefore, the convergence \eqref{convergence} also holds in
$L^2$, and condition (C3) is satisfied.
\end{example}

The \textit{driver} of the BSDE is a
$\mathcal{P} \otimes\mathcal{B}(\mathbb{R})\otimes\mathcal
{B}(\mathbb{R}^d)$-measurable
function
\[
f \dvtx  [0,T]\times\Omega\times\mathbb{R}\times\mathbb {R}^d\rightarrow
\mathbb{R},
\]
where $\mathcal{P}$ denotes the predictable $\sigma$-algebra on
$[0,T] \times\Omega$
with respect to $(\mathcal{F}_t)$ and $\mathcal{B}(\mathbb{R})$ and
$\mathcal{B}(\mathbb{R}^d)$ are the Borel $\sigma$-algebras on
$\mathbb{R}$ and $\mathbb{R}^d$, respectively.
We will assume throughout the paper that for fixed
$(t, \omega)$, $f(t,\omega,y,z)$ is continuous in $(y,z)$. Then
$\mathcal{P} \otimes\mathcal{B}(\mathbb{R})\otimes\mathcal
{B}(\mathbb{R}^d)$-measurability of $f$
is equivalent to $(t,\omega) \mapsto f(t,\omega,y,z)$ being
predictable for all fixed $(y,z)$.\looseness=-1

The approximating BS$\Delta$Es have \textit{drivers}
\[
f^N\dvtx  [0,T] \times\Omega\times\mathbb{R} \times\mathbb
{R}^d\rightarrow\mathbb{R}
\]
that are continuous in $(y,z)$, constant on the intervals $(t^N_i, t^N_{i+1}]$
and such that $\omega\mapsto f^N(t^N_{i+1},\omega, y,z)$ is $\mathcal
{F}^N_{t^N_i}$-measurable.
As usual, we henceforth suppress the dependence of $f$ and $f^N$ on
$\omega$ in the notation.

The \textit{terminal conditions} for the BSDE and BS$\Delta$Es are
given by random variables
$\xi$, $\xi^N$ that are measurable with respect to $\mathcal{F}_T$
and $\mathcal{F}^N_T$, respectively.

A \textit{solution} of the BSDE consists of a pair of predictable processes
$(Y_t, Z_t)$ with values in $\mathbb{R} \times\mathbb{R}^d$ such that
\[
{\mathbb{E}} \Bigl[\sup_{0\le t\leq T} Y_t^2 \Bigr] <
\infty,\qquad {\mathbb{E}} \biggl[ \biggl(\int_0^T
|Z_s|^2 \,\mathrm{d}s \biggr)^{1/2} \biggr] < \infty,
\]
and
%
\begin{equation}
\label{bsde} Y_t = \xi+ \int_t^T
f(s,Y_s,Z_s)\,\mathrm{d}s -\int_t^T
Z_s \,\mathrm{d}W_s,\qquad 0 \le t \le T.\vadjust{\goodbreak}
\end{equation}
In contrast to $(W_t)$, the approximating processes $(W^N_t)$ do in
general not
have the predictable representation property. Therefore, a \textit
{solution} of the
$N$th BS$\Delta$E is a triple of $(\mathcal{F}^N_t)$-adapted processes
$(Y^N_t,Z^N_t,M^N_t)$ taking values in
$\mathbb{R} \times\mathbb{R}^d \times\mathbb{R}$ such that
$(Y^N_t)$ is constant on the intervals $[t^N_i, t^N_{i+1})$,
$(Z^N_t)$ is constant on the intervals $(t^N_i, t^N_{i+1}]$,
$(M^N_t)$ is a square-integrable martingale starting at $0$ and
orthogonal to $(W^N_t)$ that is
constant on the intervals $[t^N_i, t^N_{i+1})$ and
%
\begin{equation}
\label{bdisc} Y^N_t = \xi^N + \int
_{(t,T]} f^N\bigl(s, Y^N_{s-},
Z^N_s\bigr)\, \mathrm{d} \bigl\anglel W^N\bigr
\angler_s - \int_{(t,T]} Z^N_s\,
\mathrm{d} W^N_s - \bigl(M^N_T -
M^N_t\bigr).
\end{equation}
Due to the particular form of $(Y^N_t, Z^N_t, M^N_t)$, \eqref{bdisc}
is equivalent to
%
\begin{eqnarray}
\label{disc} Y^N_{t^N_i} &=& Y^N_{t^N_{i+1}} +
f^N\bigl(t^N_{i+1},Y^N_{t^N_i},
Z^N_{t^N_{i+1}}\bigr) \Delta\bigl\anglel W^N\bigr
\angler_{t^N_{i+1}} - Z^N_{t^N_{i+1}} \Delta
W^N_{t^N_{i+1}} - \Delta M^N_{t^N_{i+1}},
\\
Y^N_T &=& \xi^N.
\end{eqnarray}
Note that if $(W^N_t)$ is a one-dimensional Bernoulli random walk, it
has the
predictable representation property and the orthogonal martingale terms in
\eqref{bdisc} and \eqref{disc} disappear.

It is well known that if the driver $f$ is Lipschitz-continuous
in $(y,z)$ and the terminal condition $\xi$ is in $L^2$, the
BSDE \eqref{bsde} admits a unique solution $(Y,Z)$;
see, for instance, Pardoux and Peng~\cite{26}
or the survey paper by El Karoui \textit{et al.}~\cite{17}. Concerning the
approximation of
BSDEs with Lipschitz drivers, we recall the following result from
Briand \textit{et al.}~\cite{5}.
Their assumptions are slightly different. But the result also holds in
our setup.

\begin{theorem}[(Briand \textit{et al.}~\cite{5})] \label{thmBriand}
Assume $\xi^N \to\xi$ in $L^2$ and there exists a constant
$K \in\mathbb{R}_+$ such that for all $N \in\mathbb{N}$, $y,y' \in
\mathbb{R}$
and $z,z' \in\mathbb{R}^d$ the following four conditions hold:
\begin{enumerate}[(iii)]
\item[{(i)}] ${\mathbb{E}} [\sup_t f(t,0,0)^2 ] <
\infty$;
\item[{(ii)}] $|f(t,y,z)-f(t,y',z')| \le K(|y-y'|+|z-z'|)$;
\item[{(iii)}] $|f^N(t,y,z) - f^N(t,y',z')| \le K(|y-y'|+|z-z'|)$;
\item[{(iv)}] $\sup_t |f^N(t,y,z)-f(t,y,z)| \rightarrow0 \mbox
{ in } L^2 \mbox{ as } N \to\infty$.
\end{enumerate}
Then, for $N$ large enough, the $N$th BSDE has a unique solution
$(Y^N,Z^N,M^N)$, and
\[
\sup_t \biggl(\bigl|Y^N_t-Y_t\bigr| + \biggl|
\int_0^t Z^N_s
\,\mathrm{d}W^N_s - \int_0^t
Z_s \,\mathrm{d}W_s\biggr|+ \bigl|M^N_t\bigr| \biggr)
\stackrel{(N \rightarrow\infty)} {\to} 0 \qquad \mbox{in } L^2
\]
as well as
\begin{eqnarray*}
&&\sup_t \Biggl(\sum_{k=1}^d
\biggl\llvert \int_0^t Z^{N,k}_s
\,\mathrm{d} \bigl\anglel W^N\bigr\angler_s - \int
_0^t Z^k_s \,\mathrm{d}s\biggr
\rrvert^2 + \biggl\llvert \int_0^t
\bigl|Z^N_s\bigr|^2\, \mathrm{d} \bigl\anglel W^N
\bigr\angler_s - \int_0^t
|Z_s|^2\, \mathrm{d}s\biggr\rrvert \Biggr)\\
&&\quad \stackrel{(N \rightarrow
\infty)} {\to} 0\qquad \mbox{in } L^1,
\end{eqnarray*}
where $(Y,Z)$ is the unique solution of the BSDE \eqref{bsde}.
\end{theorem}

\begin{remark}
Two special cases of terminal conditions satisfying $\xi^N \to\xi$
in $L^2$ are:
\begin{enumerate}[(b)]

\item[(a)]
$\xi=\varphi(W_T)$ and $\xi^N=\varphi(W^N_T)$ for a continuous function
$\varphi\dvtx  \mathbb{R}^d \to\mathbb{R}$
such that $\varphi^2(W^N_T)$, $N \in\mathbb{N}$, is uniformly integrable.

\item[(b)]
$\xi\in L^2(\F_T)$ general and $\xi^N={\mathbb{E}} [\xi|\F^N_T ]$.
\end{enumerate}
\end{remark}

The aim of this paper is to obtain similar convergence results for
non-Lipschitz drivers.
However, the following example shows that we cannot hope for general
results under the sole
assumption $\xi^N \to\xi$ in $L^2$.

\begin{example} \label{ex1}
Consider a one-dimensional Bernoulli random walk with $T=1$, $t_i^N = i/N$
and $\p[\Delta W^N_{t^N_i} = \pm\sqrt{1/N}] = 1/2$. Then
\[
\Delta\bigl\anglel W^N\bigr\angler_{t_i^N} = {\mathbb{E}}
\bigl[\bigl(\Delta W^N_{t^N_i}\bigr)^2 \bigr] =
1/N.
\]
Fix $q \in(1,2)$ and a sequence of constants $a^N \ge2 N^{({1 -
q/2})/({q-1})}$.
Consider the BS$\Delta$Es
\[
Y^N_{t_i^N} = Y^N_{t_{i+1}^N} +
\bigl|Z_{t^N_{i+1}}^N\bigr|^q \Delta\bigl\anglel
W^N\bigr\angler_{t^N_{i+1}} - Z^N_{t^N_{i+1}}
\Delta W^N_{t^N_{i+1}}, \qquad
Y^N_T = a^N 1_{ \{W^N_{t_N} = \sqrt{N} \}}.
\]
It can easily be checked that
\[
Z^N_{t^N_N} = \frac{\sqrt{N}}{2} a^N
1_{ \{W^N_{t_{N-1}^N} =
({N-1})/{\sqrt{N}} \}}\quad \mbox{and}\quad Y^N_{t_{N-1}^N} =
a^N_{t_{N-1}^N} 1_{ \{W^N_{t_{N-1}^N} =
({N-1})/{\sqrt{N}} \}}
\]
for
\[
a^N_{t_{N-1}^N} = \frac{a^N}{2} + 2^{-q}
N^{q/2-1} \bigl(a^N\bigr)^q \ge a^N.
\]
Continuing this way one gets
\[
Z^N_{t^N_{N-1}} = \frac{\sqrt{N}}{2} a^N_{t^N_{N-1}}
1_{ \{
W^N_{t_{N-2}^N} = ({N-2})/{\sqrt{N}} \}} \quad\mbox{and} \quad Y^N_{t_{N-2}^N} =
a^N_{t_{N-2}^N} 1_{ \{W^N_{t_{N-2}^N} = ({N-2})/{\sqrt{N}} \}}
\]
with
\[
a^N_{t_{N-2}^N} = \frac{a^N_{t_{N-1}}}{2} + 2^{-q}
N^{q/2 -1} \bigl(a^N_{t_{N-1}^N} \bigr)^q \ge
a^N_{t_{N-1}^N},
\]
and so on. In particular,
\[
Y^N_0 \ge a^N \ge2 N^{({1 - (q/2)})/({q-1})} \to
\infty\qquad\mbox{for } N \to\infty.
\]
Note that for $a^N = 2 N^{({1 - q/2})/({q-1})}$, one has
$\xi^N \to0$ in $L^p$ for all $p \in(0,\infty)$ but not in~$L^{\infty}$.
\end{example}

The example shows that in the case of super-linear growth
of $f^N$ in $z$ one cannot expect convergence of the discrete-time
solutions if the terminal conditions are uniformly $L^p$-bounded and
converge in $L^p$ for $p < \infty$.
This is not unexpected since in the literature on BSDEs with non-Lipschitz
drivers it is usually required that the terminal condition be in
$L^{\infty}$
or sufficiently well exponentially integrable (see Kobylanski~\cite{21}, or
Briand and Hu~\cite{6}). Consequently, in this paper, we will always assume:
\begin{enumerate}[(C4)]
\item[(C4)]
\[
\sup_N \bigl\llVert \xi^N \bigr\rrVert_{\infty} <
\infty\quad\mbox{and}\quad \llVert \xi\rrVert_{\infty} <\infty.
\]
\end{enumerate}
We shortly summarize the notation and assumptions that have been
introduced in this section:
\begin{itemize}
\item $W^N$, $N \in\mathbb{N}$, is a sequence of discrete-time
martingales approximating the $d$-dimensional
Brownian motion $W$.
\item $f$ and $\xi$ are the driver and terminal condition of the BSDE
\eqref{bsde}. A solution to
\eqref{bsde} will be denoted by $(Y,Z)$.
\item $f^N$ and $\xi^N$ are the drivers and terminal conditions of the
BS$\Delta$Es \eqref{bdisc}.
Solutions will be denoted by $(Y^N,Z^N,M^N)$.
\item We always assume (C1)--(C4).
\end{itemize}

\section{\texorpdfstring{Solutions of BS$\Delta$Es}{Solutions of BS Delta Es}}\label{secsol}

In this section, we present two results on solutions of BS$\Delta$Es
that will be needed later in the paper.
Their proofs are straightforward and therefore, given in the \hyperref[app]{Appendix}.

\begin{lemma} \label{lemmaYZM}
If a solution $(Y^N,Z^N, M^N)$ of the $N$th BS$\Delta$E exists, one has
%
\begin{equation}
\label{Yformula} Y^N_{t^N_i} - f^N
\bigl(t^N_{i+1},Y^N_{t^N_i},Z^N_{t^N_{i+1}}
\bigr) \Delta \bigl\anglel W^N\bigr\angler_{t^N_{i+1}}={
\mathbb{E}} \bigl[Y^N_{t^N_{i+1}}|\F^N_{t^N_i}
\bigr],
\end{equation}
and the pair $(Z^N, M^N)$ is uniquely determined by $Y^N$
through
%
\begin{eqnarray}
\label{Zformula} Z^{N,k}_{t^N_{i+1}} &=& \frac{{\mathbb{E}} [Y^N_{t^N_{i+1}} \Delta
W^{N,k}_{t^N_{i+1}}|\F^N_{t^N_i} ]}{\Delta\anglel
W^{N}\angler_{t^N_{i+1}}},
\\
\label{Mformula} \Delta M^N_{t^N_{i+1}} &=& Y^N_{t^N_{i+1}}
- {\mathbb{E}} \bigl[Y^N_{t^N_{i+1}}|\F^N_{t^N_i}
\bigr] - Z^N_{t^N_{i+1}} \Delta W^N_{t^N_{i+1}}.
\end{eqnarray}
\end{lemma}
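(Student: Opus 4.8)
The plan is to start from the discrete equation \eqref{disc} and exploit the martingale and measurability properties of the three components of the solution. First I would take conditional expectations of both sides of \eqref{disc} with respect to $\F^N_{t^N_i}$. On the right-hand side, $Y^N_{t^N_{i+1}}$ stays as is, the driver term $f^N(t^N_{i+1},Y^N_{t^N_i},Z^N_{t^N_{i+1}})\Delta\ang{W^N}_{t^N_{i+1}}$ is $\F^N_{t^N_i}$-measurable by the standing assumptions on $f^N$ (it is $\F^N_{t^N_i}$-measurable in $\omega$ and $Y^N_{t^N_i}$, $Z^N_{t^N_{i+1}}$ are $\F^N_{t^N_i}$-measurable — the latter because $Z^N$ is constant on $(t^N_i,t^N_{i+1}]$ and predictable), while $\E{Z^N_{t^N_{i+1}}\Delta W^N_{t^N_{i+1}}|\F^N_{t^N_i}}=0$ and $\E{\Delta M^N_{t^N_{i+1}}|\F^N_{t^N_i}}=0$, the first because $W^N$ has independent increments and mean zero, the second because $M^N$ is a martingale. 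This yields \eqref{Yformula} directly. The same is true of $Y^N_{t^N_i}$ on the left-hand side, so there is nothing further to check there.

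Next, to extract $Z^{N,k}$, I would multiply \eqref{disc} by $\Delta W^{N,k}_{t^N_{i+1}}$ and again condition on $\F^N_{t^N_i}$. The term $Y^N_{t^N_i}\Delta W^{N,k}_{t^N_{i+1}}$ contributes nothing since $Y^N_{t^N_i}$ is $\F^N_{t^N_i}$-measurable and $\E{\Delta W^{N,k}_{t^N_{i+1}}|\F^N_{t^N_i}}=0$; likewise the driver term, being $\F^N_{t^N_i}$-measurable, multiplied by $\Delta W^{N,k}_{t^N_{i+1}}$ has zero conditional expectation; and $\E{\Delta M^N_{t^N_{i+1}}\Delta W^{N,k}_{t^N_{i+1}}|\F^N_{t^N_i}}=0$ by the orthogonality of $M^N$ and $W^N$ (here one uses that $M^N W^{N,k}$ is a martingale, so its increment has vanishing conditional expectation). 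There remains
$$
\E{Y^N_{t^N_{i+1}}\Delta W^{N,k}_{t^N_{i+1}}|\F^N_{t^N_i}}
= \E{Z^N_{t^N_{i+1}}\Delta W^N_{t^N_{i+1}}\,\Delta W^{N,k}_{t^N_{i+1}}|\F^N_{t^N_i}}
= \sum_{l=1}^d Z^{N,l}_{t^N_{i+1}}\,\E{\Delta W^{N,l}_{t^N_{i+1}}\Delta W^{N,k}_{t^N_{i+1}}|\F^N_{t^N_i}}.
$$
Because $W^N$ has independent increments and all its coordinates have the same quadratic variation by (A2), with the coordinates being uncorrelated, the cross terms $l\ne k$ vanish and the $l=k$ term equals $Z^{N,k}_{t^N_{i+1}}\Delta\ang{W^N}_{t^N_{i+1}}$; dividing by $\Delta\ang{W^N}_{t^N_{i+1}}>0$ (again (A2)) gives \eqref{Zformula}. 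Finally, \eqref{Mformula} is just \eqref{disc} rearranged: solving \eqref{disc} for $\Delta M^N_{t^N_{i+1}}$ and substituting \eqref{Yformula} to replace $Y^N_{t^N_i}-f^N(\cdots)\Delta\ang{W^N}_{t^N_{i+1}}$ by $\E{Y^N_{t^N_{i+1}}|\F^N_{t^N_i}}$ yields the stated expression, and in particular shows $Z^N$ and $M^N$ are functions of $Y^N$ alone.

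The one point requiring a little care — and the only place where the hypotheses genuinely enter beyond bookkeeping — is the justification that the coordinates of the increment $\Delta W^N_{t^N_{i+1}}$ are pairwise uncorrelated conditionally on $\F^N_{t^N_i}$, which is what makes the Gram matrix of increments diagonal and forces the clean formula \eqref{Zformula}. Since $W^N$ is a square-integrable martingale with independent increments, $\E{\Delta W^{N,k}_{t^N_{i+1}}\Delta W^{N,l}_{t^N_{i+1}}|\F^N_{t^N_i}}=\E{\Delta W^{N,k}_{t^N_{i+1}}\Delta W^{N,l}_{t^N_{i+1}}}=\E{\Delta\ang{W^{N,k},W^{N,l}}_{t^N_{i+1}}}$, and one needs the off-diagonal predictable covariations to vanish; under (A2) together with the martingale property this is standard, but it is worth spelling out rather than taking for granted. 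Everything else is a routine conditioning argument, so I expect no real obstacle.
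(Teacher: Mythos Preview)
Your proposal is correct and follows exactly the same route as the paper: rewrite the one-step recursion, take conditional expectations with respect to $\F^N_{t^N_i}$ for \eqref{Yformula}, multiply by $\Delta W^{N,k}_{t^N_{i+1}}$ and condition again for \eqref{Zformula}, then read off \eqref{Mformula} by substitution. You supply considerably more detail than the paper's three-line argument, and you are right to flag the off-diagonal covariation issue --- the paper glosses over it just as you do, and indeed only later (assumption (W2) in Section~6) is the vanishing of $\E{\Delta W^{N,k}_{t^N_i}\Delta W^{N,l}_{t^N_i}}$ for $k\neq l$ stated explicitly; so your caution there is well placed, though your claim that it follows from (A2) and the martingale property alone is not quite right.
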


Concerning the existence of solutions to BS$\Delta$Es, one has
the following result. For the special case where $W^N$ is a
one-dimensional Bernoulli random walk, see Peng~\cite{27}.

\begin{proposition} \label{propex}
Assume there exists a constant $K \in\mathbb{R}_+$ and a locally bounded
function $g \dvtx  \mathbb{R}^d \to\mathbb{R}_+$ such that
\[
\bigl|f^N(t,y,z)\bigr| \le K\bigl(1+|y| + g(z)\bigr)\quad \mbox{and}\quad
\max_i \Delta\bigl\anglel W^N\bigr\angler_{t^N_i}
< 1/K .
\]
Then the $N$th BS$\Delta$E has a solution $(Y^N,Z^N,M^N)$ such that
$Y^N$ and $Z^N$ are bounded.
If $W^N$ is bounded, then so is $M^N$.\vadjust{\goodbreak}
\end{proposition}

\begin{remark}
For $\max_i \Delta\anglel W^N\angler_{t^N_i} \ge1/K$ a solution of
the $N$th
BS$\Delta$E might not exist. For example, let $W^1$ be a
one-dimensional Bernoulli random walk with
$t_0^1=0$, $t_1^1=1 = T$, $\p[\Delta W^1_1 = \pm1] =1/2$, $\xi^1=1$
and $f^1(t,y,z)=y$.
Since the terminal condition is deterministic, one must choose $Z^1_1 = 0$,
and (\ref{Ysolvi}) becomes
\[
Y^1_0 - Y^1_0 = 1,
\]
an equation without solution.
\end{remark}

\section{\texorpdfstring{Comparison principle for BS$\Delta$Es}{Comparison principle for BS Delta Es}}\label{seccomp}

Our main tool to derive convergence results
will be a comparison principle for BS$\Delta$Es of the
following form: Let $f^N_1$, $f^N_2$ be drivers and $\xi^N_1$,
$\xi^N_2$ terminal conditions such that $f^N_1(t,y,z) \ge
f^N_2(t,y,z)$ for all $t,y,z$ and $\xi^N_1 \ge\xi^N_2$. Then
the corresponding solutions satisfy $Y^N_{1,t} \ge Y^N_{2,t}$
for all $t$.

The next example shows that if the drivers grow quadratically
in $z$, a general comparison principle for BS$\Delta$Es cannot hold.

\begin{example} \label{ex2}
As in Example~\ref{ex1}, let $W^N$ be a one-dimensional
Bernoulli random walk with $T=1$, $t^N_i = i/N$ and $\p[\Delta W^N_{t^N_i}
= \pm\sqrt{1/N}] = 1/2$. Consider the BS$\Delta$Es
%
\begin{eqnarray}
\label{bsdep=2} Y^N_{t^N_i} &=& Y^N_{t^N_{i+1}}
+ \bigl(Z^N_{t^N_{i+1}}\bigr)^2 \Delta\bigl\anglel
W^N\bigr\angler_{t^N_{i+1}} - Z^N_{t^N_{i+1}}
\Delta W^N_{t^N_{i+1}},
\\
Y^N_T &=& a 1_{ \{W^N_{t_N^N} = \sqrt{N} \}}
\end{eqnarray}
for a constant $a > 2$ and define $\varepsilon> 0$ by $a = 2(1+ 2
\varepsilon)$.
Then
\[
Z^N_{t^N_N} = \frac{\sqrt{N}}{2} a 1_{ \{W^N_{t_{N-1}^N} =
({N-1})/{\sqrt{N}} \}} ,\qquad
Y^N_{t_{N-1}^N} = a^N_{t_{N-1}^N}
1_{ \{W^N_{t_{N-1}^N} =
({N-1})/{\sqrt{N}} \}},
\]
where
\begin{eqnarray*}
a^N_{t_{N-1}^N} = \frac{a}{2} + \biggl(\frac{a}{2}
\biggr)^2 = a (1 + \varepsilon)
\end{eqnarray*}
and
\[
Z^N_{t^N_{N-1}} = \frac{\sqrt{N}}{2} a^N_{t_{N-1}^N}
1_{ \{W^N_{t_{N-2}^N} = ({N-2})/{\sqrt
{N}} \}} ,\qquad Y_{t_{N-2}^N}^N = a^N_{t_{N-2}^N}
1_{ \{W^N_{t_{N-2}^N} =
({N-2})/{\sqrt{N}} \}}
\]
for
\begin{eqnarray*}
&& a^N_{t_{N-2}} = \frac{a^N_{t_{N-1}}}{2} + \biggl(
\frac
{a^N_{t_{N-1}}}{2} \biggr)^2 \ge\frac{a^N_{t_{N-1}}}{2} \biggl(1 +
\frac{a}{2} \biggr) = a^N_{t_{N-1}} (1 + \varepsilon).
\end{eqnarray*}
Continuing this computation, one obtains
\[
Y_0^N \ge a (1+\varepsilon)^N \to\infty\qquad
\mbox{as } N \to\infty.
\]
Note that the terminal conditions $Y_T^N$ are uniformly
$L^{\infty}$-bounded in $N$ and $Y_T^N \to0$ in $L^p$ for all $p <
\infty$. But
the solutions $Y^N_t$ explode as $N \to\infty$. We also point out that
for fixed $N$, the solutions to equation \eqref{bsdep=2} are not
monotone in the terminal
condition. Indeed, $(\tilde{Y}^N_t, \tilde{Z}^N_t) \equiv(a,0)$ is a
solution of
equation \eqref{bsdep=2} with terminal condition $\tilde{Y}^N_T = a
\ge Y^N_T$.
However, $\tilde{Y}^N_0 < Y^N_0$. In particular, the comparison
principle is violated.
\end{example}

In view of Example~\ref{ex2}, we restrict ourselves in the next
theorem to
drivers that grow less than quadratically in $z$. We need the following
assumption on the increments of~$W^N$:
\begin{enumerate}[(W1)]
\item[(W1)]
There exists a constant  $q \in[1,2)$ such that  $\lim_{N \to\infty} \max_{i , k}
\frac{\Vert\Delta W^{N,k}_{t^N_i}\Vert_{\infty}}{\Delta\anglel W^{N}\angler_{t^N_{i}}^{q/4}}= 0$.
\end{enumerate}
Note that the standard Bernoulli random walks of Example \ref
{ExBernoulli} satisfy (W1) for all $q \in[1,2)$.
The subsequent theorem establishes a comparison result for BS$\Delta
$Es governed by non-Lipschitz drivers.

\begin{theorem} \label{thmcomp}
Let $C, K, L \in\mathbb{R}_+$ and assume \emph{(W1)} holds for some $q
\in[1,2)$.
Then there exists $N_0 \in\mathbb{N}$ such that for every $N \ge N_0$,
all drivers $f^N_1 \ge f^N_2$ and terminal conditions $\xi^N_1 \ge\xi^N_2$
satisfying
\begin{enumerate}[(iii)]
\item[(i)] $\llVert \xi^N_m \rrVert_{\infty} \le C$,
\item[(ii)]
$|f^N_m(t,y,z)| \le K(1+|y|+|z|^q)$ for all $(t,y,z) \in[0,T]
\times\mathbb{R}^{d+1}$,
\item[(iii)]
$|f^N_m(t,y_1,z) - f^N_m(t,y_2,z)| \le L(1+|z|^q) |y_1-y_2|$
for all $(t,y_1, y_2, z) \in[0,T] \times\mathbb{R}^{d+2}$ such that
$|y_1|, |y_2| \le(C+1) \exp(K T)$,
\item[(iv)]
$|f^N_m(t,y,z_1)-f^N_m(t,y,z_2)| \le L(1+(|z_1| \vee|z_2|)^{q/2})
|z_1-z_2|$
for all $(t,y,z_1,z_2) \in[0,T] \times\mathbb{R}^{2d +1}$ such that
$|y| \le(C+1) \exp(K T)$,
\end{enumerate}
the BS$\Delta$Es with parameters $(f^N_m, \xi^N_m)$ have unique solutions
$(Y^N_m, Z^N_m, M^N_m)$, $m = 1,2$, and
\[
(C+1) \exp\bigl(K(T-t)\bigr) \ge Y^N_{1,t} \ge
Y^N_{2,t} \ge - (C+1) \exp\bigl(K(T-t)\bigr)\qquad \mbox{for all } t \in[0,T].
\]
\end{theorem}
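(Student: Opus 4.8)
The plan is to reduce the comparison statement to a one-step estimate and then iterate backwards through the time grid $t^N_{i_N} = T, t^N_{i_N-1}, \dots, t^N_0 = 0$. First I would establish existence and uniqueness of the solutions $(Y^N_m, Z^N_m, M^N_m)$ together with the a priori bound $|Y^N_{m,t}| \le (C+1)\exp\{K(T-t)\}$. Existence of \emph{some} solution with $(Y^N_m)$, $(Z^N_m)$ bounded follows from Proposition \ref{propqex} applied with $g(z) = |z|^q$, which is locally bounded; the explicit $L^\infty$-bound on $Y^N_m$ should come from comparing with the deterministic equation of Lemma \ref{lemmadetex} (driver $K(1+|y|)$, terminal value $C$), whose solution is exactly $(C+1)\exp\{K(T-t)\}-1$ --- this is why that lemma was isolated, and it is also why hypotheses (iii), (iv) are only imposed on the relevant $y$-range $|y| \le (C+1)\exp(KT)$. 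For uniqueness, note from Lemma \ref{lemmaYZM} that $(Z^N_m, M^N_m)$ are determined by $(Y^N_m)$, and $Y^N_{m,t^N_i}$ solves the scalar fixed-point equation \eqref{Ysolvi}; the map $A(\omega, y) = y - f^N_m(t^N_{i+1}, y, Z^N_{t^N_{i+1}})\Delta\langle W^N\rangle_{t^N_{i+1}}$ is strictly increasing in $y$ once $L(1+|z|^q)\,\Delta\langle W^N\rangle < 1$, which holds for $N$ large because $|Z^N_{t^N_{i+1}}|$ is controlled (via \eqref{Zformula} and (W1), as explained below) and $\max_i \Delta\langle W^N\rangle_{t^N_i} \to 0$; strict monotonicity gives uniqueness of $Y^N_{m,t^N_i}$.

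The heart of the argument is the backward induction for $\Delta Y^N_{t^N_i} := Y^N_{1,t^N_i} - Y^N_{2,t^N_i} \ge 0$. Assume $\Delta Y^N_{t^N_{i+1}} \ge 0$. From Lemma \ref{lemmaYZM},
\begin{align*}
\Delta Y^N_{t^N_i} &= \E{\Delta Y^N_{t^N_{i+1}} \mid \F^N_{t^N_i}} + f^N_1(t^N_{i+1}, Y^N_{1,t^N_i}, Z^N_{1,t^N_{i+1}})\Delta\langle W^N\rangle_{t^N_{i+1}} \\ &\qquad - f^N_2(t^N_{i+1}, Y^N_{2,t^N_i}, Z^N_{2,t^N_{i+1}})\Delta\langle W^N\rangle_{t^N_{i+1}}.
\end{align*}
The conditional expectation is $\ge 0$ by the induction hypothesis. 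Split the driver difference into three pieces: $f^N_1 - f^N_2$ evaluated at $(Y^N_{1,t^N_i}, Z^N_{1,t^N_{i+1}})$, which is $\ge 0$ by $f^N_1 \ge f^N_2$; the $y$-increment $f^N_2(\cdot, Y^N_{1,t^N_i}, Z^N_{1,t^N_{i+1}}) - f^N_2(\cdot, Y^N_{2,t^N_i}, Z^N_{1,t^N_{i+1}})$, bounded by (iii) by $L(1+|Z^N_{1,t^N_{i+1}}|^q)\,|\Delta Y^N_{t^N_i}|$; and the $z$-increment, bounded by (iv). The $y$-increment term, multiplied by $\Delta\langle W^N\rangle_{t^N_{i+1}}$, can be absorbed into the left side provided $L(1+|Z^N_{1,t^N_{i+1}}|^q)\Delta\langle W^N\rangle_{t^N_{i+1}} < 1$, turning the estimate into $\Delta Y^N_{t^N_i}(1 - \text{small}) \ge (\text{nonneg}) + (\text{error from } z\text{-term})$. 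The remaining task is to show the $z$-error term is itself nonnegative or at least nonnegative in conditional expectation --- more precisely, one re-expresses $Z^N_{m,t^N_{i+1}}$ via \eqref{Zformula} as $\Delta\langle W^N\rangle^{-1}\E{Y^N_{m,t^N_{i+1}}\Delta W^N_{t^N_{i+1}}\mid\F^N_{t^N_i}}$, so that the whole right-hand side is a conditional expectation of an expression in $\Delta Y^N_{t^N_{i+1}}$ and $\Delta W^N_{t^N_{i+1}}$, and one shows that expression is $\ge 0$ pointwise using $|\Delta W^N|/\Delta\langle W^N\rangle^{q/4} \to 0$ to dominate the quadratic-type correction by the linear drop $\Delta Y^N_{t^N_{i+1}}$.

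The main obstacle is exactly this last point: controlling the $z$-term. Naively, $|Z^N_{m,t^N_{i+1}}|$ could be as large as $\|\Delta Y^N_{t^N_{i+1}}\|_\infty \cdot \|\Delta W^N\|_\infty / \Delta\langle W^N\rangle$, and then $(1 + |Z|^{q/2})|Z^N_1 - Z^N_2|\,\Delta\langle W^N\rangle$ carries a factor $\|\Delta W^N\|_\infty^{1+q/2}/\Delta\langle W^N\rangle^{q/2}$; assumption (W1) is precisely calibrated so that this, divided by the linear gain, tends to $0$ uniformly in $i$ --- one checks $\|\Delta W^N\|_\infty^{1+q/2}/\Delta\langle W^N\rangle^{q/2} = (\|\Delta W^N\|_\infty/\Delta\langle W^N\rangle^{q/(2(2+q))})^{1+q/2}$ and relates the exponent to $q/4$. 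So I expect to prove a uniform bound of the form $\max_{i,k} |Z^{N,k}_{m,t^N_{i+1}}| \le c_N \|\Delta Y^N_{t^N_{i+1}}\|_\infty$ with $c_N \|\Delta W^N\|_\infty \to 0$, feed it into the one-step inequality to get $\Delta Y^N_{t^N_i} \ge (1 - \delta_N)^{-1}(\text{nonnegative terms})$ with $\delta_N \to 0$ uniformly in $i$, and then close the induction --- with $N_0$ chosen so that all these smallness conditions hold simultaneously for $N \ge N_0$. The double-sided bound $-(C+1)\exp\{K(T-t)\} \le Y^N_{2,t}$ is then immediate from the a priori estimate already established.
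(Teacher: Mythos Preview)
Your overall strategy matches the paper's --- backward induction, linearization of the driver difference into an $\alpha,\beta,\gamma$ decomposition, and (W1) to control the $z$-term --- but there is a genuine circularity you have not resolved, and the paper's proof is built precisely around breaking it.

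You want first to obtain the a priori bound $|Y^N_{m,t}| \le (C+1)\exp\{K(T-t)\}$ ``by comparing with the deterministic equation of Lemma \ref{lemmadetex}'', and then run the comparison $Y^N_1 \ge Y^N_2$ using (iii)--(iv). But your comparison argument itself requires (iii) and (iv), and these are only assumed on the strip $|y| \le (C+1)\exp(KT)$ --- exactly the bound you are trying to prove. Concretely, in your decomposition the $y$-increment and $z$-increment are evaluated at $Y^N_{1,t^N_i}$ and $Y^N_{2,t^N_i}$, i.e.\ at the \emph{current} step; the induction hypothesis only controls step $i{+}1$. So you cannot use your comparison machinery against the deterministic solution to get the a priori bound without already having it.

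The paper breaks this loop in two stages. First, Lemma \ref{lemmacomp1} proves the full statement --- a priori bounds \emph{and} comparison together in one backward induction --- but under \emph{global} versions of (iii)--(iv). Second, Lemma \ref{lemmacomp2} derives the a priori bound under \emph{only} (i)--(ii), via a one-step auxiliary-driver trick: at step $i{+}1$ replace $f^N$ by $\tilde f^N$ equal to $K(1+|y|+|z|^q)$ there and $0$ elsewhere (this does satisfy global (iii)--(iv)); strict monotonicity of $y\mapsto y-\tilde f^N(\cdot,y,Z^N)\Delta\langle W^N\rangle$ together with $\tilde f^N \ge f^N$ gives $\tilde Y^N_{t^N_i} \ge Y^N_{t^N_i}$, and then Lemma \ref{lemmacomp1} bounds $\tilde Y^N$ by the deterministic solution. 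Theorem \ref{thmcomp} then follows: Lemma \ref{lemmacomp2} supplies the a priori bound, after which one may modify $f^N_m$ outside $|y|\le (C+1)\exp(KT)$ to satisfy global (iii)--(iv) and invoke Lemma \ref{lemmacomp1}.

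There is also a smaller issue in your exponent bookkeeping. You bound the \emph{individual} $|Z^N_{m,t^N_{i+1}}|$ by $\|\Delta W^N\|_\infty/\Delta\langle W^N\rangle$ times an $L^\infty$-bound, which leads to the requirement $\|\Delta W^N\|_\infty^{1+q/2}/\Delta\langle W^N\rangle^{q/2}\to 0$, equivalently $\|\Delta W^N\|_\infty/\Delta\langle W^N\rangle^{q/(2+q)}\to 0$. Since $q/(2+q) > q/4$ for $q<2$, this is \emph{strictly stronger} than (W1) and need not follow from it. The paper instead uses Cauchy--Schwarz for the individual components, giving $|Z^N_{m,t^N_{i+1}}| \le \sqrt{d}\,D/\sqrt{\Delta\langle W^N\rangle_{t^N_{i+1}}}$ (see \eqref{boundZm}); fed into $|\gamma|$ this produces exactly the ratio in (W1). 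The crude $\|\Delta W^N\|_\infty$-bound is reserved for the \emph{difference} $Z^N_1-Z^N_2$, where it yields a factor $\E{|Y^N_{t^N_{i+1}}|\mid\F^N_{t^N_i}}$ that cancels against the conditional expectation already present on the right-hand side.
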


To prove Theorem~\ref{thmcomp}, we need the following two lemmas,
whose proofs can be found in the \hyperref[app]{Appendix}.
The first one provides a comparison principle under stronger
assumptions than Theorem~\ref{thmcomp}. The second one
gives conditions under which the $Y^N$ are uniformly bounded in $N$.

\begin{lemma} \label{lemmacomp1}
Let $C,K \in\mathbb{R}_+$ and assume \emph{(W1)} holds for some $q \in[1,2)$.
Then there exists $N_0 \in\mathbb{N}$ such that for every $N \ge N_0$,
all drivers $f^N_1 \ge f^N_2$ and terminal conditions $\xi^N_1 \ge\xi^N_2$
satisfying conditions \emph{(i)} and \emph{(ii)} of Theorem \ref
{thmcomp} as well as
\begin{enumerate}[(iii)]
\item[(iii)]
$|f^N_m(t,y_1,z) - f^N_m(t,y_2,z)| \le K(1+|z|^q) |y_1-y_2|$
for all $(t,y_1,y_2,z) \in[0,T] \times\mathbb{R}^{d+2}$,\vspace*{1pt}
\item[(iv)]
$|f^N_m(t,y,z_1)-f^N_m(t,y,z_2)| \le q K(1+(|z_1| \vee|z_2|)^{q/2}) |z_1-z_2|$
for all $(t,y,z_1,z_2) \in[0,T] \times\mathbb{R}^{2d+1}$,\vadjust{\goodbreak}
\end{enumerate}
the BS$\Delta$Es with parameters $(f^N_m, \xi^N_m)$ have unique solutions
$(Y^N_{m}, Z^N_{m}, M^N_{m})$, $m=1,2$, and
%
\begin{equation}
\label{statlemma} (C+1) \exp\bigl(K(T-t)\bigr) \ge Y^N_{1,t}
\ge Y^N_{2,t} \ge- (C+1) \exp\bigl(K(T-t)\bigr) \qquad\mbox{for
all } t \in[0,T].
\end{equation}
\end{lemma}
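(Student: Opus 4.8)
The plan is to fix a large $N$, first secure existence and uniqueness of the solutions, and then prove the a priori bounds and the comparison by a single backward induction over the grid $0=t^N_0<\dots<t^N_{i_N}=T$, in which $Y^N_1$ and $Y^N_2$ are compared both with one another and with the explicit \emph{deterministic} solution supplied by Lemma \ref{lemmadetex}. For $N$ large enough that $\max_i\Delta\ang{W^N}_{t^N_i}<1/K$, condition (ii) and Proposition \ref{propqex} (with $g(z)=|z|^q$, which is locally bounded) yield solutions $(Y^N_{m},Z^N_{m},M^N_{m})$, $m=1,2$, with $Y^N_m$ and $Z^N_m$ bounded. Let $\bar Y^N$ be the solution of Lemma \ref{lemmadetex} for the driver $K(1+|y|)$ (i.e.\ $g\equiv0$) and terminal value $C$: it is deterministic, nonnegative, satisfies $\bar Y^N_{t^N_i}(1-K\Delta\ang{W^N}_{t^N_{i+1}})=\bar Y^N_{t^N_{i+1}}+K\Delta\ang{W^N}_{t^N_{i+1}}$, and converges uniformly to $(C+1)\exp(K(T-t))-1$; hence for $N$ large $0\le\bar Y^N_{t^N_i}\le(C+1)\exp(K(T-t^N_i))=:\rho_i\le D:=(C+1)\exp(KT)$ for all $i$.

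\noindent\textbf{The master induction.}
I claim that for every solution, every $m$ and every $i$ one has $-\bar Y^N_{t^N_i}\le Y^N_{2,t^N_i}\le Y^N_{1,t^N_i}\le\bar Y^N_{t^N_i}$. This is (i) together with $\xi^N_1\ge\xi^N_2$ at $i=i_N$. For the step from $i+1$ to $i$, subtract the relations \eqref{Yformula} for the two processes being compared; in each of the three comparisons this gives
\[
\Delta Y_i \;=\; p \;+\; \Delta\ang{W^N}_{t^N_{i+1}}\,\big(\text{driver difference}\big),\qquad p:=\E{\Delta Y_{i+1}\mid\F^N_{t^N_i}}\ge0,
\]
with $\Delta Y_{i+1}\ge0$ by the induction hypothesis. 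Using $f^N_1\ge f^N_2$ and (ii)--(iv), the driver difference is bounded below by $-K|\Delta Y_i|$ minus a term of order $|Z^N_{m,t^N_{i+1}}|^q$ and a term of order $(1+|Z^N_{m,t^N_{i+1}}|^{q/2})|\Delta Z_{i+1}|$, where $\Delta Z_{i+1}$ is attached to $\Delta Y$ by \eqref{Zformula}. These two $Z$-terms are handled as follows. A conditional Cauchy--Schwarz bound in \eqref{Zformula} together with $|Y^N_{m,t^N_{i+1}}|\le\bar Y^N_{t^N_{i+1}}\le D$ gives $|Z^N_{m,t^N_{i+1}}|\le\sqrt d\,D\,\Delta\ang{W^N}_{t^N_{i+1}}^{-1/2}$, so $\Delta\ang{W^N}_{t^N_{i+1}}|Z^N_{m,t^N_{i+1}}|^q\le(\sqrt d\,D)^q(\max_j\Delta\ang{W^N}_{t^N_j})^{1-q/2}\to0$. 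Moreover — and this is where comparing with the \emph{deterministic} $\bar Y^N$ is essential — $\bar Y^N$ contributes nothing to the conditional covariance with $\Delta W^{N,k}_{t^N_{i+1}}$, so, since $\Delta Y_{i+1}\ge0$, both $|Z^N_{m,t^N_{i+1}}|$ (in the two a priori-bound comparisons) and $|\Delta Z_{i+1}|$ (in the $Y^N_1\ge Y^N_2$ comparison) are $\le\sqrt d\,\N{\Delta W^N_{t^N_{i+1}}}_{\infty}\,\Delta\ang{W^N}_{t^N_{i+1}}^{-1}\,p$. Invoking (W1) in the form $\N{\Delta W^{N,k}_{t^N_{i+1}}}_{\infty}\le\eps_N\Delta\ang{W^N}_{t^N_{i+1}}^{q/4}$, with $\eps_N:=\max_{i,k}\N{\Delta W^{N,k}_{t^N_i}}_{\infty}/\Delta\ang{W^N}_{t^N_i}^{q/4}\to0$, the dangerous factor $\Delta\ang{W^N}_{t^N_{i+1}}^{-q/4}$ hidden in $|Z^N_{m,t^N_{i+1}}|^{q/2}$ is cancelled and one obtains $\Delta\ang{W^N}_{t^N_{i+1}}(1+|Z^N_{m,t^N_{i+1}}|^{q/2})|\Delta Z_{i+1}|\le c_1\eps_N\,p$, and similarly $\Delta\ang{W^N}_{t^N_{i+1}}|Z^N_{m,t^N_{i+1}}|^q\le c_2\eps_N^q\,p$ in the a priori-bound comparisons (trading $p^q\le(2D)^{q-1}p$). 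Thus the displayed identity yields $\Delta Y_i\ge(1-\alpha_N)p-\beta_N|\Delta Y_i|$ with $\alpha_N,\beta_N\to0$ uniformly in $i$; for $N$ large $\alpha_N,\beta_N<1$, and then $\Delta Y_i<0$ would force $(1-\beta_N)\Delta Y_i\ge(1-\alpha_N)p\ge0$, a contradiction, so $\Delta Y_i\ge0$. This closes the induction, and since $\bar Y^N_{t^N_i}\le\rho_i$ the asserted two-sided bound by $(C+1)\exp(K(T-t))$ follows.

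\noindent\textbf{Uniqueness.}
Once $|Y^N_{m,t^N_i}|\le D$ is known, \eqref{Zformula} determines $Z^N_m$ from $Y^N_m$, and by the estimates above $K(1+|Z^N_{m,t^N_{i+1}}|^q)\Delta\ang{W^N}_{t^N_{i+1}}<1$ for $N$ large; hence, by (iii) (assumed \emph{globally} in $y$ in the lemma), the map $y\mapsto y-f^N_m(t^N_{i+1},y,Z^N_{m,t^N_{i+1}})\Delta\ang{W^N}_{t^N_{i+1}}$ is strictly increasing, so \eqref{Yformula} pins down $Y^N_{m,t^N_i}$ given $Y^N_{m,t^N_{i+1}}$. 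A backward induction from $Y^N_{m,T}=\xi^N_m$ together with \eqref{Mformula} gives uniqueness of $(Y^N_m,Z^N_m,M^N_m)$.

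\noindent\textbf{Main obstacle.}
The hard part is precisely the control of the $Z$-terms in the master induction: a priori $|Z^N_{m,t^N_{i+1}}|$ can be as large as $\Delta\ang{W^N}_{t^N_{i+1}}^{-1/2}\to\infty$, so both the subquadratic growth in (ii) and the modulus $(|z_1|\vee|z_2|)^{q/2}$ in (iv) threaten to blow up the per-step estimate. The resolution requires two ingredients used simultaneously: comparing against the deterministic $\bar Y^N$, so that the randomness of $Y^N_m$ equals that of the nonnegative difference $\Delta Y$ and thereby ties $|Z^N_m|$ and $|\Delta Z|$ to $\E{\Delta Y_{i+1}\mid\F^N_{t^N_i}}$ — the very quantity being iterated; and assumption (W1), whose exponent $q/4$ is exactly tuned so that $\N{\Delta W^N_{t^N_{i+1}}}_\infty\,\Delta\ang{W^N}_{t^N_{i+1}}^{-q/4}\to0$ neutralizes the surviving negative power of $\Delta\ang{W^N}_{t^N_{i+1}}$. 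Checking that this bookkeeping is uniform in $i$ (which works because, for $N$ large, every $\Delta\ang{W^N}_{t^N_i}<1$ and the exponents $1-q/2$ and $(1-q/2)^2$ arising are nonnegative) is the technical heart of the argument.
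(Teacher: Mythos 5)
Your proof is correct, and its skeleton is the paper's: existence via Proposition \ref{propqex}, then one backward induction in which the comparison $Y^N_1\ge Y^N_2$ and the two-sided bound are proved simultaneously, using the crude Cauchy--Schwarz bound $|Z^N_m|\le \sqrt d\,D\,\Delta\ang{W^N}^{-1/2}$ for the $y$-increment term and the refined bound $|\Delta Z|\le \sqrt d\,\N{\Delta W^N}_\infty\Delta\ang{W^N}^{-1}\,\E{\Delta Y_{i+1}\mid\F^N_{t^N_i}}$ (valid because $\Delta Y_{i+1}\ge 0$) together with (W1) for the $z$-increment term; this is exactly the paper's pair of smallness conditions. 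The one place you deviate is the a priori bound: the paper compares $Y^N_m$ with the BS$\Delta$E whose driver is the \emph{dominating} function $\hat f^N(t,y,z)=K(1+|y|+|z|^q)$, whose solution for the deterministic terminal value $C$ has $Z\equiv 0$ and hence coincides with the exponential-type deterministic process of Lemma \ref{lemmadetex}; since $\hat f^N\ge f^N_m$ pointwise, the driver-difference term is nonnegative and no extra estimate is needed. You instead compare with the solution for $K(1+|y|)$, which does not dominate $f^N_m$, and absorb the resulting error $K|Z^N_m|^q\Delta\ang{W^N}$ into $p=\E{\Delta Y_{i+1}\mid\F^N_{t^N_i}}$ via the refined $Z$-bound and a second use of (W1) (your exponent check $(1-q/2)^2\ge 0$ is the right computation, and the uniformity in $i$ holds once $\max_i\Delta\ang{W^N}_{t^N_i}\le 1$). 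Both routes work; the paper's choice of comparison driver simply makes that extra absorption step unnecessary, while your uniqueness argument (monotonicity of $y\mapsto y-f^N_m\Delta\ang{W^N}$ once $Z^N_m$ is pinned down by \eqref{Zformula} and the a priori bound) is an equivalent substitute for the paper's implicit ``apply the comparison to two solutions of the same equation.''
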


\begin{lemma} \label{lemmacomp2}
Let $C,K \in\mathbb{R}_+$ and assume \emph{(W1)} holds for some $q \in[1,2)$.
Then there exists $N_0 \in\mathbb{N}$ such that for every $N \ge N_0$,
all drivers $f^N$ and terminal conditions $\xi^N$
satisfying
\begin{enumerate}[(ii)]
\item[(i)]
$\llVert \xi^N \rrVert_{\infty} \le C$,
\item[(ii)]
$|f^N(t,y,z)| \le K(1+|y|+|z|^q)$ for all $t \in[0,T]$, $y \in\mathbb{R}$
and $z \in\mathbb{R}^d$,
\end{enumerate}
every solution $(Y^N,Z^N,M^N)$ of the $N$th BS$\Delta$E satisfies
%
\begin{equation}
\label{statlemma2} \bigl|Y^N_t\bigr| \le(C+1) \exp\bigl(K(T-t)
\bigr) \qquad\mbox{for all } t \in[0,T].
\end{equation}
\end{lemma}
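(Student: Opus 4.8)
The plan is to produce a solution via Proposition~\ref{propqex} and then to derive~\eqref{statlemma2} from a comparison argument, run as in the proof of Lemma~\ref{lemmacomp1}, against the explicit dominating driver $\hat f^N(t,y,z):=K(1+|y|+|z|^q)$. Applying Proposition~\ref{propqex} with $g(z)=|z|^q$ --- which is nonnegative, locally bounded and satisfies $g(0)=0$ --- gives, for every $N$ with $\max_i\Delta\ang{W^N}_{t^N_i}<1/K$ and hence for all large $N$, a solution $(Y^N_t,Z^N_t,M^N_t)$ with $(Y^N_t)$ and $(Z^N_t)$ bounded; so only~\eqref{statlemma2} remains to be shown. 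By Lemma~\ref{lemmadetex} (with the same $g$), for $N$ large the BS$\Delta$E with driver $\hat f^N$ and constant terminal condition $C$ has a deterministic solution $\hat Y^N$ with $0\le\hat Y^N_t\le(C+1)\exp(K(T-t))$ for all $t$, using that $\hat Y^N$ converges uniformly to $(C+1)\exp(K(T-t))-1$. Furthermore $\hat f^N$ satisfies conditions (ii), (iii) (with constant $K$) and (iv) (with constant $qK$) of Lemma~\ref{lemmacomp1}; the only nontrivial verification is (iv), which follows from $\abs{|z_1|^q-|z_2|^q}\le q(|z_1|\vee|z_2|)^{q-1}|z_1-z_2|$ and the elementary inequality $s^{q-1}\le 1+s^{q/2}$ for all $s\ge 0$ (valid since $q-1\le q/2$).

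After enlarging $N_0$ so that, in addition, the smallness conditions~\eqref{choose1} and~\eqref{choose2} hold with $D:=2(C+1)\exp(KT)$ --- possible because $q<2$ and (W1) holds --- I claim $|Y^N_t|\le\hat Y^N_t$ for every $N\ge N_0$, which I would prove by backward induction on $i$. The bound holds at $T$ since $|\xi^N|\le C=\hat Y^N_T$. Assuming $|Y^N_{t^N_j}|\le\hat Y^N_{t^N_j}$ for all $j\ge i+1$, I rerun the computation in the proof of Lemma~\ref{lemmacomp1} with $(f^N_1,\xi^N_1)=(\hat f^N,C)$ and $(f^N_2,\xi^N_2)=(f^N,\xi^N)$, with $(Y^N_2,Z^N_2,M^N_2)$ the solution supplied by Proposition~\ref{propqex}. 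What makes this legitimate is that in that computation the Lipschitz-type conditions (iii) and (iv) are applied only to the \emph{larger} driver $f^N_1=\hat f^N$; the smaller driver $f^N_2=f^N$ enters only through the coefficient $\alpha=\hat f^N(t^N_{i+1},Y^N_{2,t^N_i},Z^N_{2,t^N_{i+1}})-f^N(t^N_{i+1},Y^N_{2,t^N_i},Z^N_{2,t^N_{i+1}})\ge 0$ (nonnegative by the growth bound (ii) on $f^N$) and through the estimate $|Y^N_{2,t^N_{i+1}}|\le\hat Y^N_{t^N_{i+1}}\le D$ from the induction hypothesis, which via Lemma~\ref{lemmaYZM} and (A2) yields $|Z^N_{2,t^N_{i+1}}|^2\le dD^2(\Delta\ang{W^N}_{t^N_{i+1}})^{-1}$; here also $Z^N_{1,t^N_{i+1}}=0$, which only simplifies the $\gamma$-term. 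One concludes $Y^N_{t^N_i}:=Y^N_{1,t^N_i}-Y^N_{2,t^N_i}\ge 0$, that is $Y^N_{t^N_i}\le\hat Y^N_{t^N_i}$, exactly as there. For the reverse inequality, note that $(-Y^N,-Z^N,-M^N)$ solves the $N$-th BS$\Delta$E with driver $\tilde f^N(t,y,z):=-f^N(t,-y,-z)$, which also satisfies the growth bound (ii), and terminal condition $-\xi^N$, which satisfies (i); running the same argument for this equation gives $-Y^N_t\le\hat Y^N_t$. Hence $|Y^N_t|\le\hat Y^N_t\le(C+1)\exp(K(T-t))$ for all $t$, which is~\eqref{statlemma2}.

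The main obstacle is exactly that $f^N$ is only assumed continuous, not Lipschitz, in $(y,z)$, so Lemma~\ref{lemmacomp1} is not directly applicable; the device that gets around this is the observation, exploited above, that the proof of Lemma~\ref{lemmacomp1} invokes the regularity hypotheses only for the larger of the two drivers --- and here that role is played by the explicit, well-behaved $\hat f^N$, while $f^N$ itself enters only via a sign and an a priori bound. The one remaining technicality, handled as in the proof of Lemma~\ref{lemmacomp1}, concerns the division by $Y^N_{t^N_i}$ and by the components $Z^{N,k}_{t^N_{i+1}}$ in the definitions of $\beta$ and $\gamma$: on the measurable set where such a denominator vanishes the matching numerator vanishes too, and the ratio may be set to $0$ there.
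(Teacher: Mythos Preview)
Your argument is correct. The key insight --- that in the proof of Lemma~\ref{lemmacomp1} the Lipschitz-type hypotheses (iii) and (iv) are applied only to the \emph{larger} driver $f^N_1$, while $f^N_2$ enters solely through $\alpha\ge 0$ and the a~priori bound on $Y^N_{2,t^N_{i+1}}$ --- is a genuine observation, and the reflection $(-Y^N,-Z^N,-M^N)$ for the lower bound is clean.

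The paper proceeds differently: rather than opening up the proof of Lemma~\ref{lemmacomp1}, it applies that lemma as a black box. At the induction step it introduces an auxiliary driver $\tilde f^N$ equal to $\hat f^N$ at the single time $t^N_{i+1}$ and zero elsewhere; this $\tilde f^N$ \emph{does} satisfy (iii)--(iv), so Lemma~\ref{lemmacomp1} gives uniqueness and the comparison $\bar Y^N_{t^N_i}\ge\tilde Y^N_{t^N_i}$ (where $\bar Y^N$ has terminal condition $\hat Y^N_{t^N_{i+1}}$, hence $\bar Y^N_{t^N_i}=\hat Y^N_{t^N_i}$). The remaining comparison $\tilde Y^N_{t^N_i}\ge Y^N_{t^N_i}$ is obtained by a one-step monotonicity argument: the map $\tilde A(\omega,y)=y-\tilde f^N(t^N_{i+1},y,Z^N_{t^N_{i+1}})\Delta\ang{W^N}_{t^N_{i+1}}$ is strictly increasing and dominated by $A$, so $\tilde A(\tilde Y^N_{t^N_i})=A(Y^N_{t^N_i})\ge\tilde A(Y^N_{t^N_i})$. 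Your approach is more direct and exposes why the non-Lipschitz $f^N$ causes no trouble; the paper's approach is more modular, reusing Lemma~\ref{lemmacomp1} verbatim at the price of the auxiliary one-step driver.
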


We now are ready for the proof.

\begin{pf*}{Proof of Theorem~\ref{thmcomp}}
It follows from Proposition~\ref{propex} and
Lemma~\ref{lemmacomp2} that there exists an $N_1$ such that for
all $N \ge N_1$, the $N$th BS$\Delta$E has a solution $(Y^N,Z^N,M^N)$
for all
$f^N$ and $\xi^N$ satisfying conditions (i) and (ii) of Theorem
\ref{thmcomp}, and every such solution satisfies $|Y^N_t| \le(C+1)
\exp(K(T-t))$, $0 \le t \le T$. Now choose $N_0 \ge N_1$
such that Lemma~\ref{lemmacomp1} holds for $\tilde{K} = K \vee L$
instead of $K$ and fix $N \ge N_0$. If $f^N_1 \ge f^N_2$ and
$\xi^N_1 \ge\xi^N_2$ are drivers and terminal conditions satisfying
conditions (i)--(iv) of Theorem~\ref{thmcomp}, then
there exist corresponding solutions $(Y^N_m, Z^N_m, M^N_m)$,
$m=1,2$, both of which satisfy $|Y^N_{m,t}| \le(C+1) \exp(K(T-t))$.
So one can change the drivers $f^N_m$ for $|y| >
(C+1) \exp(K T)$ such that they satisfy the conditions of
Lemma~\ref{lemmacomp1}, and it follows that $Y^N_{1,t} \ge
Y^N_{2,t}$. In particular, both solutions are unique.
\end{pf*}

\section{Convergence results for drivers with subquadratic
growth}\label{secconvergence}

With a slight abuse of notation, the discrete-time drivers can be
written as $f^N(t,W^N,y,z)$.
By predictability, $f^N(t^N_{i+1},W^N, y,z)$ only depends on
$W^N_{t^N_1}, \ldots, W^N_{t^N_i}$.
Let $q \in[1,2)$ and consider the following conditions on the drivers $f^N\dvt$
There exists a constant $K > 0$ such that
\begin{enumerate}[(f2)]
\item[(f1)]
For all $N \in\mathbb{N},$
$w \in\mathbb{R}^{d \times i_N}$
and $(t,y,z) \in[0,T] \times\mathbb{R}^{d+1}$,
\[
\bigl|f^N(t,w,y,z)\bigr|\leq K\bigl(1+|y|+|z|^{q}\bigr).
\]
\item[(f2)] For all $N \in\mathbb{N},$
$w \in\mathbb{R}^{d \times i_N}$
and $(t,y_1,y_2,z)
\in[0,T] \times\mathbb{R}^{d+2}$,
\[
\bigl|f^N(t,w,y_1,z)-f^N(t,w,y_2,z)\bigr|
\le K|y_1-y_2|.
\]
\item[(f3)]
For every $a \in\mathbb{R}_+$ there exists $b \in\mathbb{R}_+$ such
that for all $N \in\mathbb{N}$, $t \in[0,T]$,
$y \in[-a,a],$ $w \in\mathbb{R}^{d \times i_N}$ and $z_1,z_2 \in
\mathbb{R}^d$,
\[
\bigl|f^N(t,w,y,z_1)-f^N(t,w,y,z_2)\bigr|
\le b\bigl(1+\bigl(|z_1| \vee|z_2|\bigr)^{q/2}\bigr)
|z_1-z_2|.
\]
\item[(f4)]
For all $N \in\mathbb{N}$, $i=0, \ldots,i_N-1$, $w_1, w_2 \in\mathbb
{R}^{d \times i_N}$ and
$(y,z) \in\mathbb{R}^{d+1}$,
\[
\bigl|f^N\bigl(t^N_{i+1},w_1,y,z
\bigr)-f^N\bigl(t^N_{i+1},w_2,y,z
\bigr)\bigr| \leq K \sup_{0 \le t \le t^N_{i}}\bigl| w_1(t) - w_2(t)\bigr|.
\]
\item[(f5)]
For all $(y,z) \in\mathbb{R}^{d+1}$,
\[
\sup_{0\leq t\leq T}\bigl|f^N(t,y,z)-f(t,y,z)\bigr|\rightarrow0 \qquad \mbox{in
$L^2$ as $N \to\infty$.}
\]
\end{enumerate}
For a measurable function $g \dvtx  [0,T] \times\Omega\times
\mathbb{R}^{d+1} \to\mathbb{R}$, denote
\[
\Vert g\Vert_\infty= \mathop{\ess\sup}_{\omega}
\sup_{t,y,z}\bigl|g(t,\omega,y,z)\bigr|.
\]
The following lemma shows that the solutions of the BS$\Delta$Es
are stable in the terminal condition and the driver function.
The proof relies on Theorem~\ref{thmcomp} and can be found in the \hyperref[app]{Appendix}.

\begin{lemma} \label{infterm2}
Let $C,K \in\mathbb{R}_+$ and assume condition \emph{(W1)} holds for
some $q \in[1,2)$. Then there exists
$N_0 \in\mathbb{N}$ and a constant $D \in\mathbb{R}_+$ such
that for all $N \ge N_0$, all terminal conditions $\xi^N_1$,
$\xi^N_2$ bounded by $C$ and drivers $f^N_1$, $f^N_2$
satisfying \emph{(f1)--(f3)} as well as $\Vert f^N_1-f^N_2\Vert_\infty
\le K$, the BS$\Delta$Es with parameters $(f^N_m, \xi^N_m)$
have unique solutions $(Y^N_{m}, Z^N_{m}, M^N_{m})$, $m = 1,2$, and
\[
\sup_{0\leq t\leq T}\bigl|Y^{N}_{1,t}-Y^{N}_{2,t}\bigr|
\le D \bigl(\bigl\Vert f^N_1-f^N_2\bigr\Vert_\infty+
\bigl\Vert\xi^N_1-\xi^N_2\bigr\Vert_\infty
\bigr).
\]
\end{lemma}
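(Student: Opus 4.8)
The plan is to linearize the one-step equation for the difference $v^N_{t^N_i}:=Y^N_{1,t^N_i}-Y^N_{2,t^N_i}$, which turns it into a \emph{linear} BS$\Delta$E, and then estimate the solution by a change-of-measure (Girsanov-type) argument. The key point — and the reason a naive estimate fails — is that the linear equation is driven by $\Delta W^N$, which has conditional mean zero, so that the relevant ``density'' is a genuine martingale of conditional expectation one; this rules out the kind of step-by-step exponential blow-up seen in Example \ref{ex2}.

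First the setup. Since $f^N_1,f^N_2$ satisfy (f1) and $\N{\xi^N_m}_\infty\le C$, Lemma \ref{lemmacomp2} gives $N_1$ such that for $N\ge N_1$ both BS$\Delta$Es have solutions with $|Y^N_{m,t}|\le D_0:=(C+1)\exp(KT)$; enlarging $N_1$, we may also assume $K\max_i\Delta\ang{W^N}_{t^N_i}<1$, so that by (f2) the map $y\mapsto y-f^N_m(t^N_{i+1},y,z)\Delta\ang{W^N}_{t^N_{i+1}}$ is a strictly increasing bijection of $\mathbb{R}$ and hence, via Lemma \ref{lemmaYZM}, each solution is unique. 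Fix $a=D_0$ in (f3) and let $b$ be the corresponding constant. From \eqref{Zformula}, the bound $|Y^N_{m,t^N_{i+1}}|\le D_0$ and conditional Cauchy--Schwarz one gets $|Z^N_{m,t^N_{i+1}}|\le\sqrt{d}\,D_0\,\Delta\ang{W^N}_{t^N_{i+1}}^{-1/2}$. Splitting the one-step driver difference into a term $f^N_1-f^N_2$ evaluated at $(Y^N_{2},Z^N_{2})$, a $y$-increment of $f^N_1$, and a telescoping $z$-increment of $f^N_1$ taken coordinate by coordinate, we may write it as $\alpha_i+\beta_i v^N_{t^N_i}+\gamma_i\cdot(Z^N_{1,t^N_{i+1}}-Z^N_{2,t^N_{i+1}})$, where all of $\alpha_i,\beta_i,\gamma_i$ are $\F^N_{t^N_i}$-measurable, $|\alpha_i|\le\N{f^N_1-f^N_2}_\infty$, $|\beta_i|\le K$ by (f2), and, by (f3) and the bound on $|Z^N_{m,t^N_{i+1}}|$, $|\gamma_i|\le\sqrt{d}\,b\,(1+(2dD_0^2)^{q/4}\Delta\ang{W^N}_{t^N_{i+1}}^{-q/4})$. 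Since $q<2$, the computation behind \eqref{choose2} together with (W1) shows $\max_i|\gamma_i|\,|\Delta W^N_{t^N_{i+1}}|\to 0$, so we may fix $N_0\ge N_1$ such that for all $N\ge N_0$, $K\max_i\Delta\ang{W^N}_{t^N_i}\le 1/2$ and $\max_i|\gamma_i\cdot\Delta W^N_{t^N_{i+1}}|\le 1/2$; in particular $1+\gamma_i\cdot\Delta W^N_{t^N_{i+1}}\ge 1/2>0$.

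Now subtract the two versions of \eqref{disc}, take $\E{\cdot\,|\,\F^N_{t^N_i}}$, and use \eqref{Zformula} for $v^N$ (so that $\gamma_i\cdot(Z^N_{1,t^N_{i+1}}-Z^N_{2,t^N_{i+1}})\Delta\ang{W^N}_{t^N_{i+1}}=\E{v^N_{t^N_{i+1}}(\gamma_i\cdot\Delta W^N_{t^N_{i+1}})|\F^N_{t^N_i}}$), the martingale property of $W^N$, and the orthogonality of $M^N_1,M^N_2$ to $W^N$; one obtains
\[
v^N_{t^N_i}\brak{1-\beta_i\Delta\ang{W^N}_{t^N_{i+1}}}=\E{v^N_{t^N_{i+1}}\brak{1+\gamma_i\cdot\Delta W^N_{t^N_{i+1}}}|\F^N_{t^N_i}}+\alpha_i\Delta\ang{W^N}_{t^N_{i+1}}.
\]
Since $|1-\beta_i\Delta\ang{W^N}_{t^N_{i+1}}|\ge 1-K\Delta\ang{W^N}_{t^N_{i+1}}\ge 1/2$ and $1+\gamma_i\cdot\Delta W^N_{t^N_{i+1}}\ge 0$, dividing, taking absolute values and using $(1-x)^{-1}\le e^{2x}$ on $[0,1/2]$ gives $|v^N_{t^N_i}|\le e^{2K\Delta\ang{W^N}_{t^N_{i+1}}}(\E{|v^N_{t^N_{i+1}}|(1+\gamma_i\cdot\Delta W^N_{t^N_{i+1}})|\F^N_{t^N_i}}+\N{f^N_1-f^N_2}_\infty\Delta\ang{W^N}_{t^N_{i+1}})$. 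Iterating this backwards from $i=i_N$ and introducing the nonnegative $(\F^N_{t^N_l})_{l\ge i}$-martingale $R^{(i)}_j:=\prod_{l=i}^{j-1}(1+\gamma_l\cdot\Delta W^N_{t^N_{l+1}})$, which satisfies $\E{R^{(i)}_j|\F^N_{t^N_i}}=1$ because $\gamma_l$ is $\F^N_{t^N_l}$-measurable and $\E{\Delta W^N_{t^N_{l+1}}|\F^N_{t^N_l}}=0$, a straightforward induction yields
\[
|v^N_{t^N_i}|\le e^{2K\ang{W^N}_T}\brak{\E{|\xi^N_1-\xi^N_2|\,R^{(i)}_{i_N}|\F^N_{t^N_i}}+\N{f^N_1-f^N_2}_\infty\sum_{j=i}^{i_N-1}\Delta\ang{W^N}_{t^N_{j+1}}\,\E{R^{(i)}_j|\F^N_{t^N_i}}}.
\]
Bounding $|\xi^N_1-\xi^N_2|\le\N{\xi^N_1-\xi^N_2}_\infty$, using $\E{R^{(i)}_j|\F^N_{t^N_i}}=1$ and $\sum_j\Delta\ang{W^N}_{t^N_{j+1}}\le\ang{W^N}_T$, and recalling from \eqref{WNt} that $\ang{W^N}_T\to T$, so that $S:=\sup_N\ang{W^N}_T<\infty$, we conclude $\sup_{0\le t\le T}|Y^N_{1,t}-Y^N_{2,t}|=\max_i|v^N_{t^N_i}|\le e^{2KS}(1\vee S)(\N{f^N_1-f^N_2}_\infty+\N{\xi^N_1-\xi^N_2}_\infty)$, which is the assertion with $D:=e^{2KS}(1\vee S)$.

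The main obstacle is precisely the blow-up of Example \ref{ex2}: estimating the $\gamma$-term by the crude bound $|\gamma_i\cdot(Z^N_{1,t^N_{i+1}}-Z^N_{2,t^N_{i+1}})|\Delta\ang{W^N}_{t^N_{i+1}}\le\E{|v^N_{t^N_{i+1}}||\F^N_{t^N_i}}$ would insert a factor $\ge 2$ at each of the $\approx N$ time steps and destroy the estimate. This is avoided by keeping that term in the form $\E{v^N_{t^N_{i+1}}(1+\gamma_i\cdot\Delta W^N_{t^N_{i+1}})|\F^N_{t^N_i}}$ and exploiting that $R^{(i)}$ is a nonnegative martingale of conditional expectation one; the one genuinely delicate ingredient is the use of (W1) to guarantee $1+\gamma_i\cdot\Delta W^N_{t^N_{i+1}}\ge 0$ even though $|Z^N_{m,t^N_{i+1}}|$ may be of order $\Delta\ang{W^N}_{t^N_{i+1}}^{-1/2}$.
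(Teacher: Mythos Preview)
Your proof is correct and takes a genuinely different route from the paper. The paper proves the estimate by applying the comparison principle (Theorem \ref{thmcomp}) twice to auxiliary BS$\Delta$Es: it introduces $\tilde f^N=f^N_2+\|f^N_1-f^N_2\|_\infty$, $\tilde\xi^N=\xi^N_2+\|\xi^N_1-\xi^N_2\|_\infty$, and then sandwiches $Y^N_1\le\tilde Y^N\le Y^N_2+\hat Y^N$, where $\hat Y^N$ solves an explicit deterministic linear BS$\Delta$E whose solution is bounded by $D(\|f^N_1-f^N_2\|_\infty+\|\xi^N_1-\xi^N_2\|_\infty)$; the second comparison requires building a driver for $\bar Y^N=Y^N_2+\hat Y^N$ and checking it satisfies the hypotheses of Theorem \ref{thmcomp} with enlarged constants $\tilde C,\tilde K$. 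You instead linearize the one-step difference directly (this is the same $\alpha,\beta,\gamma$ decomposition that already appears inside the proof of Lemma \ref{lemmacomp1}), but rather than using it for a one-step sign argument you \emph{iterate} it, recognizing that the product $R^{(i)}_j=\prod_l(1+\gamma_l\cdot\Delta W^N_{t^N_{l+1}})$ is a nonnegative martingale of conditional expectation one, so the potentially dangerous $\gamma$-contributions telescope exactly. Your approach is more self-contained (it only needs the a~priori bound of Lemma \ref{lemmacomp2}, not the full comparison theorem) and yields a clean explicit constant $D=e^{2KS}(1\vee S)$; the paper's approach is more modular in that, once Theorem \ref{thmcomp} is available, the stability estimate falls out of two black-box applications plus a deterministic computation, at the cost of tracking the auxiliary constants $\tilde C,\tilde K$ needed to keep the intermediate drivers within the scope of the comparison principle.
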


The next lemma shows that for Lipschitz-continuous terminal conditions, the
$Z^N$ are uniformly bounded. This will be a key ingredient in the
proofs of our convergence results.
The proof is given in the \hyperref[app]{Appendix}.

\begin{lemma} \label{l3b}
Assume \emph{(W1)} and \emph{(f1)--(f4)} hold for some $q \in[1,2)$ and
the $\xi^N$
are of the form $\xi^N= \varphi(W^N_{s_{1}},\ldots, W^N_{s_{n}})$ for fixed
$n \in\mathbb{N}$, $0\le s_{1}< \cdots<s_{n}\le T,$ and
a bounded Lipschitz-continuous function $\varphi\dvtx  \mathbb{R}^{d
\times n} \to\mathbb{R}$.
Then there exists an $N_0 \in\mathbb{N}$ such for all $N \ge N_0$,
the $N$th BS$\Delta$E
has a unique solution $(Y^N,Z^N)$ and
$\sup_{N \ge N_0} \llVert \sup_t|Z^N_t| \rrVert_\infty<
\infty$.\looseness=-1
\end{lemma}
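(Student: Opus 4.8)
The plan is to obtain a uniform bound on $|Z^N_t|$ by controlling the oscillation of $Y^N$ over one time step, since by Lemma~\ref{lemmaYZM} and (W1),
$$
|Z^N_{t^N_{i+1}}| \le d^{1/2} \max_k \frac{\N{\Delta W^{N,k}_{t^N_{i+1}}}_\infty}{\Delta \ang{W^N}_{t^N_{i+1}}} \; \E{|Y^N_{t^N_{i+1}} - \E{Y^N_{t^N_{i+1}} \mid \F^N_{t^N_i}}| \mid \F^N_{t^N_i}},
$$
and $\N{\Delta W^{N,k}_{t^N_{i+1}}}_\infty / \Delta\ang{W^N}_{t^N_{i+1}}$ is of order $\Delta\ang{W^N}^{q/4-1}$, which blows up. So a crude bound $|Y^N| \le (C+1)\exp(KT)$ from Lemma~\ref{lemmacomp2} is not enough; I need that the martingale increment $Y^N_{t^N_{i+1}} - \E{Y^N_{t^N_{i+1}} \mid \F^N_{t^N_i}}$ is small, of order $\Delta\ang{W^N}^{1-q/4}$ (up to a constant), uniformly in $i$ and $N$. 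The key structural input is (f4): $f^N$ depends on the past only through $W^N$, in a $K$-Lipschitz way, and the terminal condition $\xi^N = \varphi(W^N_{s_1},\dots,W^N_{s_n})$ is a Lipschitz function of the path of $W^N$.

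First I would argue that $Y^N_{t^N_i}$ can itself be written as $v^N_i(W^N_{t^N_1},\dots,W^N_{t^N_i})$ for a function $v^N_i$ that is Lipschitz in its arguments, with a Lipschitz constant $\Lambda$ bounded uniformly in $i$ and $N$. This is a backward induction: at the terminal time $v^N_{i_N}$ is Lipschitz with constant $\mathrm{Lip}(\varphi)$ (after grouping the finitely many $s_j$ among the grid points, using (A1) for $N$ large); for the inductive step, from the defining relation
$$
Y^N_{t^N_i} - f^N(t^N_{i+1}, W^N, Y^N_{t^N_i}, Z^N_{t^N_{i+1}}) \Delta\ang{W^N}_{t^N_{i+1}} = \E{Y^N_{t^N_{i+1}} \mid \F^N_{t^N_i}}
$$
one couples two copies started from paths $w$ and $w'$. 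The conditional expectation $\E{v^N_{i+1}(w, W^{N}_{t^N_{i+1}}) }$ (average over the independent increment) is Lipschitz in $w$ with the same constant as $v^N_{i+1}$ plus the contribution of the increment's law; $f^N$ contributes a $K|w(t)-w'(t)|$ term via (f4) and, via (f2), a factor that is handled because $\kappa = K\max_i \Delta\ang{W^N} < 1$ for $N$ large, so the implicit equation for $Y^N_{t^N_i}$ is a contraction in $y$ and the Lipschitz dependence on the past survives with constant $(\Lambda_{i+1} + K\,\Delta\ang{W^N})/(1 - K\Delta\ang{W^N})$. A discrete Gronwall argument over the $i_N$ steps, using $\sum_i \Delta\ang{W^N}_{t^N_i} = \ang{W^N}_T \le T+1$, then gives $\Lambda := \sup_{i,N}\Lambda_i \le \mathrm{Lip}(\varphi)\exp(K(T+1))$ or similar, finite.

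Given the uniform Lipschitz bound, I would estimate the one-step oscillation: since $Y^N_{t^N_{i+1}} = v^N_{i+1}(W^N_{t^N_1},\dots,W^N_{t^N_i},W^N_{t^N_{i+1}})$ with $W^N_{t^N_1},\dots,W^N_{t^N_i}$ being $\F^N_{t^N_i}$-measurable,
$$
\E{|Y^N_{t^N_{i+1}} - \E{Y^N_{t^N_{i+1}} \mid \F^N_{t^N_i}}| \mid \F^N_{t^N_i}} \le 2\Lambda \, \E{|\Delta W^N_{t^N_{i+1}}|} \le 2\Lambda\, d^{1/2}\, \N{\Delta W^{N,k}_{t^N_{i+1}}}_\infty .
$$
Plugging into the formula for $Z^N_{t^N_{i+1}}$ from the display above yields
$$
|Z^N_{t^N_{i+1}}| \le 2\Lambda\, d \, \frac{\N{\Delta W^{N,k}_{t^N_{i+1}}}_\infty^2}{\Delta\ang{W^N}_{t^N_{i+1}}}
= 2\Lambda\, d\, \N{\Delta W^{N,k}_{t^N_{i+1}}}_\infty^2 \cdot \frac{1}{\Delta\ang{W^N}_{t^N_{i+1}}},
$$
and since by (W1) $\N{\Delta W^{N,k}_{t^N_{i+1}}}_\infty \le o(1)\cdot \Delta\ang{W^N}_{t^N_{i+1}}^{q/4}$ with $q<2$, hence $q/2 - 1 < 0$, we get $\N{\Delta W^{N,k}}_\infty^2/\Delta\ang{W^N} \le o(1)\,\Delta\ang{W^N}^{q/2-1}$; combined with $\max_i\Delta\ang{W^N}_{t^N_i}\to 0$ this is bounded (in fact $\to 0$) uniformly in $i$ for $N$ large. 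Therefore $\sup_{N\ge N_0}\N{\sup_t |Z^N_t|}_\infty < \infty$.

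The main obstacle is the first step: establishing the \emph{uniform} (in $i$ and $N$) Lipschitz property of the value functions $v^N_i$. One must track carefully how the implicit equation defining $Y^N_{t^N_i}$ in terms of $\E{Y^N_{t^N_{i+1}} \mid \F^N_{t^N_i}}$, $Z^N_{t^N_{i+1}}$ and the past of $W^N$ propagates Lipschitz constants backward without the factor $1+|z|^q$ in (f3) wrecking the bound — this is why only (f2), the genuine $K$-Lipschitz bound in $y$, is used for the contraction, while (f3)/(f4) control the $z$- and $w$-dependence; one also needs (A1) to absorb the fixed times $s_1,\dots,s_n$ into grid points once $N$ is large, and (W1) only enters at the very end. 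The discrete Gronwall step is routine once the per-step recursion $\Lambda_i \le (\Lambda_{i+1} + K\Delta\ang{W^N}_{t^N_{i+1}})(1-K\Delta\ang{W^N}_{t^N_{i+1}})^{-1}$ is in hand.
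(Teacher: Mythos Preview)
Your overall strategy---show that $Y^N_{t^N_i} = v^N_i(W^N_{t^N_1},\dots,W^N_{t^N_i})$ with $v^N_i$ uniformly Lipschitz, then use this to bound $Z^N$---matches the paper's. But the step-by-step backward induction you propose for the Lipschitz constant has a genuine gap: the recursion $\Lambda_i \le (\Lambda_{i+1} + K\Delta\ang{W^N})/(1-K\Delta\ang{W^N})$ ignores the dependence of $Z^N_{t^N_{i+1}}$ on the path $(w_1,\dots,w_i)$. By \eqref{Zformula}, perturbing the path by $\delta$ changes $Z^N_{t^N_{i+1}}$ by $O(\Lambda_{i+1}|\delta|\,\Delta\ang{W^N}^{-1/2})$, and since $|Z^N_{t^N_{i+1}}|$ itself may be of order $\Delta\ang{W^N}^{-1/2}$, condition (f3) turns this into a contribution of order $\Lambda_{i+1}|\delta|\,\Delta\ang{W^N}^{1/2-q/4}$ (not $\Delta\ang{W^N}$) to the one-step recursion. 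Iterated over $i_N \sim T/\Delta\ang{W^N}$ steps, the multiplicative factor $\prod_i(1 + c\,\Delta\ang{W^N}_i^{1/2-q/4})$ blows up as $N\to\infty$, so a uniform Lipschitz bound does not follow from this argument.

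The paper avoids a one-step recursion altogether. For each fixed $i$ it conditions on $W^N_{t^N_j} = w_j$, $j < i$, and $W^N_{t^N_i} = x$, and regards the remaining equation on $[t^N_i,T]$ as a BS$\Delta$E in the process $\tilde{W}^N = W^N - W^N_{t^N_i}$ with driver $\bar{f}^{N,x}$ and terminal condition $\xi^{N,x}$ depending on the parameter $x$. By (f4) and the Lipschitz property of $\varphi$, one has $\|\bar{f}^{N,x_1} - \bar{f}^{N,x_2}\|_\infty \le K|x_1-x_2|$ and $\|\xi^{N,x_1}-\xi^{N,x_2}\|_\infty \le C|x_1-x_2|$ \emph{uniformly in $(y,z)$}, so the $L^\infty$-stability result Lemma~\ref{infterm2} (which rests on the comparison principle Theorem~\ref{thmcomp}) yields $|Y^{N,x_1}_{t^N_i} - Y^{N,x_2}_{t^N_i}| \le D(C+K)|x_1-x_2|$ directly, with $D$ independent of $i$ and $N$. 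This bypasses any step-by-step tracking of the $z$-dependence. A smaller point: your final estimate bounds $|Z^N_{t^N_{i+1}}|$ by a constant times $\|\Delta W^{N,k}\|_\infty^2/\Delta\ang{W^N}$, and (W1) alone does \emph{not} keep this quantity bounded; the paper instead keeps the pointwise Lipschitz estimate inside the expectation and uses $\E{|\Delta W^N_{t^N_i}|^2}/\Delta\ang{W^N}_{t^N_i} = d$ to obtain a bound that requires only $\sup_i|\Delta W^N_{t^N_i}| \le 1$.
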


\begin{remark} \label{remark1}
In general $\sup_{N \ge N_0} \llVert \sup_t |Z^N_t| \rrVert_\infty
< \infty$
does not hold
if $\varphi$ is not Lipschitz-continuous. For example,
consider one-dimensional Bernoulli random walks $W^N$ with $T=1$,
$t^N_i = i/N$ and $\p[\Delta W^N_{t^N_i} = \pm\sqrt{1/N}] = 1/2$.
Let the terminal conditions be of the form
\[
\xi^N = \cases{ \sqrt{W^N_1}\wedge1,
&\quad if $W^N_1 \ge0$,
\cr
-\sqrt{-W^N_1}
\vee-1, &\quad if $W^N_1 < 0$. }\vadjust{\goodbreak}
\]
On the set $\{W^N_{(N-1)/N}\!=\!0\}$ one has
$\xi^N\!=\!{\sign}(\Delta W^N_1)\sqrt{|\Delta W^N_1|}$, and hence, by Lemma~\ref{lemmaYZM},
\[
Z^{N}_1 = \frac{{\mathbb{E}} [\xi^N \Delta W^N_1 |
W^N_{(N-1)/N}=0 ]}{\Delta\anglel W^{N}\angler_1} = N^{1/4}.
\]
In particular, $Z^{N}_1 \rightarrow\infty$
as $N \to\infty$ on the set $\{W^N_{(N-1)/N}=0\}.$\vspace*{-1pt}
\end{remark}

Before we prove convergence of solutions of BS$\Delta$Es to solutions
of BSDEs,
we recall the following result on quadratic BSDEs,
which follows from Theorems 2.5--2.7 of Morlais~\cite{24}.\vspace*{-1pt}

\begin{theorem} [(Morlais~\cite{24})]\label{Morlais}
Let $K \in\mathbb{R}_+$ such that
%
\begin{eqnarray}
\label{f1}  \bigl|f(t,y,z)\bigr| &\le& K\bigl(1+|y|+|z|^{2}\bigr),
\\[-2pt]
\label{f2}  \bigl|f(t,y_1,z)-f(t,y_2,z)\bigr| &\le&
K|y_1-y_2| \qquad\mbox{for all } y_1,
y_2\in \mathbb{R},
\end{eqnarray}
and for every $a \in\mathbb{R}_+$ there exists
$b \in\mathbb{R}_+$ such that
%
\begin{equation}\label{f3} \bigl|f(t,y,z_1)-f(t,y,z_2)\bigr| \le b
\bigl(1+\bigl(|z_1| \vee|z_2|\bigr)\bigr) |z_1-z_2|
\end{equation}
for all $t \in[0,T]$, $y \in[-a,a]$ and $z_1,z_2 \in\mathbb{R}^d$.
Then the BSDE \eqref{bsde} has a unique solution $(Y,Z)$ such that
$Y$ is bounded. Furthermore, for bounded terminal conditions $\xi_1\!\geq\!\xi_2$ and drivers
$f_1 \!\ge\! f_2$ fulfilling \eqref{f1}--\eqref{f3}, the corresponding
solutions satisfy $Y_{1,t} \!\ge\! Y_{2,t}$ for all~$t$.\looseness=-1\vspace*{-1pt}
\end{theorem}

\begin{remark}
Actually, Morlais~\cite{24} makes slightly different assumptions. In her
paper, the
underlying noise process is continuous but does not have to be a
Brownian motion,
and condition \eqref{f3} is assumed to hold for a constant $b$
independent of $a$.
However, existence of a solution $(Y,Z)$ with bounded $Y$ already
follows from
\eqref{f1}, and if $Y$ is bounded by a constant $a \in\mathbb
{R}_+$, the
driver $f(t,y,z)$ only matters for $y \in[-a,a]$ and can be modified
so that it
satisfies \eqref{f3} for a constant $b$ independent of $a$. Hence, assumptions
\eqref{f1}--\eqref{f3} are sufficient for Theorem~\ref{Morlais}.\vspace*{-1pt}
\end{remark}

\begin{proposition} \label{PropLipschconv}
Assume there exists a $q \in[1,2)$ such that \emph{(W1)} and \emph{(f1)--(f5)} hold. If
$\xi$ and $\xi^N$ are of the form $\xi= \varphi(W_{s_{1}},\ldots
,W_{s_{n}})$ and
$\xi^N= \varphi(W^N_{s_{1}},\ldots, W^N_{s_{n}})$ for fixed
$n \in\mathbb{N}$, $0\le s_{1}<\cdots<s_{n}\le T$, and a bounded
Lipschitz-continuous function
$\varphi\dvtx  \mathbb{R}^{d \times n} \to\mathbb{R}$, then there exists
an $N_0 \in\mathbb{N}$ such that for all $N \ge N_0$,
the $N$th BS$\Delta$E has a unique solution $(Y^N,Z^N,M^N)$ satisfying
$\sup_{N \ge N_0} \llVert \sup_t |Z^N_t| \rrVert_{\infty} < \infty
$, the BSDE
\eqref{bsde} has a
unique solution $(Y,Z)$ with bounded $Y$, and
\[
\sup_t \biggl(\bigl|Y^N_t-Y_t\bigr| + \biggl|
\int_0^t Z^N_s
\,\mathrm{d}W^N_s - \int_0^t
Z_s \,\mathrm{d}W_s\biggr|+ \bigl|M^N_t\bigr| \biggr)
\stackrel{(N\rightarrow\infty)} {\to} 0\qquad \mbox{in } L^2
\]
as well as
\begin{eqnarray*}
&&\sup_t \Biggl(\sum_{k=1}^d
\biggl\llvert \int_0^t Z^{N,k}_s
\,\mathrm{d} \bigl\anglel W^N\bigr\angler_s - \int
_0^t Z^k_s \,\mathrm{d}s\biggr
\rrvert^2 + \biggl\llvert \int_0^t
\bigl|Z^N_s\bigr|^2 \,\mathrm{d} \bigl\anglel W^N
\bigr\angler_s - \int_0^t
|Z_s|^2 \,\mathrm{d}s\biggr\rrvert \Biggr) \\[-2pt]
&&\quad\stackrel{(N\rightarrow
\infty)} {\to} 0 \qquad\mbox{in } L^1.\vadjust{\goodbreak}
\end{eqnarray*}
In particular, there exists a constant $R \in\mathbb{R}_+$ such that
$|Z| \le R$ $\nu\otimes\p$-almost everywhere, where $\nu$ denotes
Lebesgue measure on $[0,T]$.
\end{proposition}

\begin{pf}
It follows from (f1)--(f5) that the driver $f$ satisfies \eqref
{f1}--\eqref{f3}. So one obtains from Theorem~\ref{Morlais}
that the BSDE \eqref{bsde} has a unique solution $(Y,Z)$ such that $Y$
is bounded.
By Lemma~\ref{l3b}, there exists $N_0 \in\mathbb{N}$ such that for
all $N \ge N_0$, the
$N$th BS$\Delta$E has a unique solution $(Y^N,Z^N,M^N)$ and
$\sup_{N \ge N_0} \llVert \sup_t |Z^N_t| \rrVert_\infty\le R$ for
some constant
$R \in\mathbb{R}_+$. Define
\[
\tilde{f}^N(t,y,z) = \cases{ f^N(t,y,z), &\quad for $|z|\leq
R$,
\cr
f^N\bigl(t,y,Rz/|z|\bigr), &\quad for $|z| >R$ }
\]
and
\[
\tilde{f}(t,y,z) = \cases{ f(t,y,z), &\quad for $|z|\le R$,
\cr
f\bigl(t,y,Rz/|z|\bigr), &\quad for $|z|
>R$. }
\]
Then the $\tilde{f}^N$ are uniformly Lipschitz in $(y,z)$ and
\[
\sup_{0 \leq t \leq T}\bigl| \tilde{f}^N(t,y,z)-\tilde{f}(t,y,z)\bigr| \to0\qquad
\mbox{in $L^2$ as $N \to\infty$}.
\]
So it follows that $\tilde{f}^N$ and $\tilde{f}$ fulfill the
conditions of Theorem
\ref{thmBriand}. Denote by $(\tilde{Y}^N,\tilde{Z}^N,\tilde{M}^N)$
the solution
to the $N$th BS$\Delta$E with parameters $(\tilde{f}^N, \xi^N)$ and by
$(\tilde{Y},\tilde{Z})$ the solution of the BSDE corresponding to
$(\tilde{f},\xi)$. Since the $Z^N$ are bounded by $R$,
$(Y^N,Z^N,M^N)$ is also a solution of the BS$\Delta$E
corresponding to $(\tilde{f}^N,\xi^N)$. So it follows from Theorem
\ref{thmcomp} that for $N$ large enough,
$(Y^N,Z^N,M^N) = (\tilde{Y}^N,\tilde{Z}^N,\tilde{M}^N)$, and
we may apply Theorem~\ref{thmBriand} to conclude that
%
\begin{equation}
\label{oneconv} \sup_t \biggl(\bigl|Y^{N}_t-
\tilde{Y}_t\bigr|+ \biggl|\int_0^t
Z^N_s \,\mathrm{d}W^N_s-\int
_0^t \tilde{Z}_s
\,\mathrm{d}W_s\biggr|+\bigl|M^{N}_t\bigr| \biggr) \stackrel{(N
\rightarrow\infty)} {\to} 0\qquad \mbox{in } L^2,
\end{equation}
and
%
\begin{eqnarray}
\label{secondconv}
&&\sup_t \Biggl(\sum_{k=1}^d
\biggl\llvert \int_0^t Z^{N,k}_s
\,\mathrm{d} \bigl\anglel W^N\bigr\angler_s - \int
_0^t \tilde{Z}^k_s \,\mathrm{d}s
\biggr\rrvert^2 + \biggl\llvert \int_0^t
\bigl|Z^N_s\bigr|^2 \,\mathrm{d} \bigl\anglel W^N
\bigr\angler_s - \int_0^t |
\tilde{Z}_s|^2 \,\mathrm{d}s\biggr\rrvert \Biggr)\nonumber\\[-8pt]\\[-8pt]
&&\quad \stackrel{(N
\rightarrow\infty)} {\to} 0\nonumber
\end{eqnarray}
in $L^1$. It follows from \eqref{secondconv} that $|\tilde{Z}| \le R$
$\nu\otimes\p$-almost everywhere.
So $(\tilde{Y}, \tilde{Z})$ is also a solution of the original BSDE
corresponding to $(f,\xi)$, and it follows from
Theorem~\ref{Morlais} that it is equal to $(Y,Z)$. This completes the proof.
\end{pf}

Another result that we need below is the following proposition.
%
\begin{proposition}[(Briand and Hu~\cite{6})]\label{propBH}
Let $(\xi_m)_{m\in\mathbb{N}}$ be a sequence of $\F_T$-measurable
random variables such that $\sup_m \Vert\xi_m\Vert_\infty< \infty$ and
$\xi_m \to\xi$ almost surely. Furthermore assume that $f$ satisfies
\eqref{f1}.
Let $(Y_m,Z_m)$ and $(Y,Z)$ be solutions of the BSDEs
corresponding to $(f,\xi_m)$ and $(f,\xi)$, respectively, such that
$Y_m$ and $Y$ are bounded. If $Y_m$ is increasing (or decreasing) in
$m$, then
\[
\sup_t |Y_{m,t} - Y_t| \to0\qquad \mbox{a.s.}\quad
\mbox{and}\quad {\mathbb{E}} \biggl[\int_0^T
|Z_{m,s} - Z_s|^2 \,\mathrm{d}s \biggr] \to0 \qquad\mbox {for }
m \to\infty.
\]
\end{proposition}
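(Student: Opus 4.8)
The plan is to run the classical monotone stability argument for quadratic BSDEs (Kobylanski 2000; Briand and Hu 2006). By replacing $(f,\xi_m,\xi)$ with $\big(-f(t,-y,-z),-\xi_m,-\xi\big)$, which still satisfies \eqref{f'1}, we may assume that $m\mapsto Y_m$ is decreasing; let $M$ bound $\N{\xi_m}_\infty$, $\N{\xi}_\infty$, $\sup_t|Y_{m,t}|$ and $\sup_t|Y_t|$ uniformly. Then $Y_{m,t}$ decreases to some $\bar Y_t$ for every $(t,\omega)$, with $|\bar Y_t|\le M$, and $Y_m\to\bar Y$ in $L^p(dt\otimes d\p)$ for all $p<\infty$ by dominated convergence. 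The task reduces to: (a) a uniform a priori bound on the control processes; (b) strong convergence $Z_m\to\bar Z$ in $L^2(dt\otimes d\p)$; (c) identifying $(\bar Y,\bar Z)$ with the solution $(Y,Z)$ and upgrading the $Y$-convergence to $\sup_t|Y_{m,t}-Y_t|\to0$ almost surely.

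For (a) I would use the exponential change of variables $P_m:=\exp(2KY_m)$, which is bounded between $e^{-2KM}$ and $e^{2KM}$ and decreases pointwise to $\bar P:=\exp(2K\bar Y)$. Itô's formula gives
$$
dP_{m,t}=\big(2K^2P_{m,t}|Z_{m,t}|^2-2KP_{m,t}f(t,Y_{m,t},Z_{m,t})\big)\,dt+2KP_{m,t}Z_{m,t}\,dW_t,
$$
and by \eqref{f'1} together with $|Y_m|\le M$ the drift is bounded below by $-2K^2(1+M)P_{m,t}\ge -c$ for a constant $c$; hence $P_{m,\cdot}+c\,\cdot$ is a bounded submartingale, decreasing in $m$, whose Doob--Meyer martingale part is $\int_0^\cdot 2KP_{m,s}Z_{m,s}\,dW_s$ and whose predictable increasing part is $\int_0^\cdot(2K^2P_{m,s}|Z_{m,s}|^2-2KP_{m,s}f(s,Y_{m,s},Z_{m,s})+c)\,ds$. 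Taking conditional expectations in the integrated form of this equation yields $\sup_m\N{\int_0^\cdot Z_{m,s}\,dW_s}_{\mathrm{BMO}}<\infty$, so in particular $\sup_m\E{\int_0^T|Z_{m,s}|^2\,ds}<\infty$.

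Step (b) is the crux and the step I expect to be the main obstacle, because $f$ grows only quadratically (not Lipschitz-linearly) in $z$, so there is no self-contained Gronwall/comparison estimate for $Z_m-Z_{m'}$; one has to exploit the monotone structure of the transformed equation. Applying Peng's (1999) monotonic limit theorem to the submartingales $P_{m,\cdot}+c\,\cdot$ — which have uniformly $L^2$-bounded martingale parts and increasing finite-variation parts — one obtains that $\bar P$ is continuous, that $2KP_{m,\cdot}Z_{m,\cdot}\to 2K\bar P_\cdot\bar Z_\cdot$ in $L^p(dt\otimes d\p)$ for every $p<2$, and (since $P_m$ is bounded away from $0$) that $Z_m\to\bar Z$ in $L^p(dt\otimes d\p)$ for every $p<2$. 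Continuity of $\bar P$, hence of $\bar Y$, together with $Y_{m,\cdot}\downarrow\bar Y_\cdot$ pointwise and the continuity of each $Y_{m,\cdot}$, gives via Dini's theorem that $\sup_t|Y_{m,t}-\bar Y_t|\to0$ pointwise, hence in $L^2$. Finally, combining the $L^p$-convergence ($p<2$) with the uniform bound $\sup_m\E{\int_0^T|Z_{m,s}|^2\,ds}<\infty$ and a uniform-integrability argument upgrades the convergence of $Z_m$ to strong $L^2(dt\otimes d\p)$.

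It remains to pass to the limit in the equation and identify. Along a subsequence $Z_m\to\bar Z$ $dt\otimes d\p$-a.e.; by Cauchy--Schwarz $|Z_m|^2\to|\bar Z|^2$ in $L^1(dt\otimes d\p)$, so by continuity of $f$ in $(y,z)$, the bound $|Y_m|\le M$ and the quadratic growth \eqref{f'1} (generalized dominated convergence), $f(\cdot,Y_m,Z_m)\to f(\cdot,\bar Y,\bar Z)$ in $L^1(dt\otimes d\p)$, whence $\sup_t\abs{\int_t^T f(s,Y_{m,s},Z_{m,s})\,ds-\int_t^T f(s,\bar Y_s,\bar Z_s)\,ds}\to0$ in $L^1$; also $\sup_t\abs{\int_0^t Z_{m,s}\,dW_s-\int_0^t\bar Z_s\,dW_s}\to0$ in $L^2$ by Doob's inequality. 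Letting $m\to\infty$ in $Y_{m,t}=\xi_m+\int_t^T f(s,Y_{m,s},Z_{m,s})\,ds-\int_t^T Z_{m,s}\,dW_s$ shows that $(\bar Y,\bar Z)$ is a bounded, continuous solution of the BSDE with data $(f,\xi)$; by uniqueness of the bounded solution (Theorem \ref{Morlais}) it coincides with $(Y,Z)$. Together with Step (b) this gives $\sup_t|Y_{m,t}-Y_t|\to0$ almost surely and $\E{\int_0^T|Z_{m,s}-Z_s|^2\,ds}\to0$. The delicate point throughout is Step (b): the absence of Lipschitz growth in $z$ forces the detour through the exponential transform, the submartingale/monotone structure and Peng's monotonic limit theorem, with the quadratic-in-$z$ term only disappearing after this change of variables.
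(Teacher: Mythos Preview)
The paper does not supply its own proof of this proposition; it is quoted with attribution to Briand and Hu (2006) and used as a black box. Your outline is the standard monotone-stability argument of Kobylanski (2000) and Briand--Hu (2006), and its architecture is correct. Two technical points are worth flagging.

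First, in Step (b) the passage from $L^p$-convergence ($p<2$) of $Z_m$ to strong $L^2$-convergence is not a uniform-integrability triviality: a uniform bound $\sup_m\E{\int_0^T|Z_{m,s}|^2\,ds}<\infty$ (or even the BMO bound) does not make $|Z_m|^2$ uniformly integrable on $[0,T]\times\Omega$. The way this is actually done in the cited references is: once Dini's theorem delivers $\sup_t|Y_{m,t}-\bar Y_t|\to0$ a.s., apply It\^o to $|Y_{m}-Y_{n}|^2$ (or to $|Y_m-\bar Y|^2$ after $\bar Y$ is known to be a semimartingale). The quadratic-growth bound \eqref{f'1} then gives
\[
\E{\int_0^T|Z_{m,s}-Z_{n,s}|^2\,ds}\le \E{|\xi_m-\xi_n|^2}+C\,\E{\sup_t|Y_{m,t}-Y_{n,t}|\int_0^T\big(1+|Z_{m,s}|^2+|Z_{n,s}|^2\big)\,ds},
\]
and the right-hand side tends to zero by H\"older's inequality and the energy bound $\sup_m\big\|\int_0^T|Z_{m,s}|^2\,ds\big\|_{L^p}<\infty$ for all $p$, which follows from the BMO estimate. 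This shows $(Z_m)$ is Cauchy in $L^2$ directly.

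Second, your final identification $(\bar Y,\bar Z)=(Y,Z)$ via Theorem~\ref{Morlais} requires \eqref{f'1}--\eqref{f'3}, whereas the proposition as stated assumes only \eqref{f'1}. Under \eqref{f'1} alone, bounded solutions of quadratic BSDEs need not be unique, so strictly speaking the proposition should be read as asserting that the monotone limit is \emph{a} bounded solution, which is then named $(Y,Z)$. In the paper's applications (Theorem~\ref{thmone}, Remark~\ref{variationHu}) the driver does satisfy all of \eqref{f'1}--\eqref{f'3}, so your appeal to Theorem~\ref{Morlais} is harmless in context, but it is not licensed by the bare hypotheses of the proposition.
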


\begin{remark}\label{variationHu}
Note that if $f$ satisfies \eqref{f1}--\eqref{f3}, then Proposition
\ref{propBH} holds
without the assumption that $Y_m$ is increasing or decreasing in $m$.
Indeed, by Theorem~\ref{Morlais} one has $Y(\xi_1)\geq Y(\xi_2)$ for
$\xi_1\geq\xi_2$
(where $Y(\xi)$ denotes the solution of the BSDE with driver $f$ and
terminal condition $\xi$). Define $\hat{\xi}_m=\sup_{n\geq m} \xi_n$ and
$\tilde{\xi}_m=\inf_{n\geq m} \xi_n$. Then one obtains from
Proposition~\ref{propBH}
that $\sup_t |Y_t(\hat{\xi}_m) - Y_t(\xi)| \to0$ and
$\sup_t |Y_t(\tilde{\xi}_m) - Y_t(\xi)| \to0$ a.s., and therefore also
$\sup_t |Y_t(\xi_m) - Y_t(\xi)| \to0$ a.s. The convergence of
$Z(\xi_m)$ to $Z(\xi)$
now follows exactly as in the proof of Proposition 2.4 in Kobylanski~\cite{21}.
\end{remark}

The next theorem shows that for any continuous-time terminal
condition there exists a sequence of discrete-time terminal conditions
such that the corresponding solutions of the BS$\Delta$Es
converge to their counterparts in continuous time.

\begin{theorem}
\label{thmone}
Assume there exists a $q \in[1,2)$ such that \emph{(W1)} and \emph{(f1)--(f5)} are satisfied.
Then for every $\xi\in L^{\infty}(\mathcal{F}_T)$, there exist
$\mathcal{F}^N_T$-measurable $\tilde{\xi}^N$ bounded by $\llVert
\xi\rrVert_{\infty}$ such that for $N$ large enough, the $N$th
BS$\Delta$E
with terminal condition $\tilde{\xi}^N$ has a unique solution
$(\tilde{Y}^N, \tilde{Z}^N, \tilde{M}^N)$ and
%
\begin{equation}
\label{Th1a} \sup_t \biggl(\bigl|\tilde{Y}^N_t-Y_t\bigr|
+ \biggl|\int_0^t \tilde{Z}^N_s
\,\mathrm{d}W^N_s - \int_0^t
Z_s \,\mathrm{d}W_s\biggr|+ \bigl|\tilde{M}^N_t\bigr|
\biggr) \stackrel{(N \rightarrow\infty)} {\to} 0\qquad \mbox{in } L^2
\end{equation}
as well as
%
\begin{eqnarray}
\label{Th1b} &&\sup_t \Biggl(\sum_{k=1}^d
\biggl\llvert \int_0^t \tilde{Z}^{N,k}_s
\,\mathrm{d} \bigl\anglel W^N\bigr\angler_s - \int
_0^t Z^k_s \,\mathrm{d}s\biggr
\rrvert^2 + \biggl\llvert \int_0^t
\bigl|\tilde{Z}^N_s\bigr|^2 \,\mathrm{d} \bigl\anglel
W^N\bigr\angler_s - \int_0^t
|Z_s|^2 \,\mathrm{d}s\biggr\rrvert \Biggr)\nonumber\\[-8pt]\\[-8pt] &&\quad\stackrel{(N\rightarrow
\infty)} {\to} 0\qquad \mbox{in } L^1,\nonumber
\end{eqnarray}
where $(Y,Z)$ is the unique solution of the BSDE \eqref{bsde} with
bounded $Y$.
Moreover, if $\xi= \varphi(W_{s_1},\ldots,W_{s_n})$ and $\xi^N =
\varphi(W^N_{s_1},\ldots,W^N_{s_n})$
for a bounded, uniformly continuous function $\varphi\dvtx  \mathbb{R}^{d
\times n} \to\mathbb{R}$, then
\[
\sup_t \bigl|Y^N_t-Y_t\bigr| \to0\qquad \mbox{in
} L^2 \mbox{ as } N \rightarrow \infty,
\]
where $(Y^N,Z^N,M^N)$ solves the $N$th BS$\Delta$E with terminal
condition $\xi^N$.
\end{theorem}

\begin{pf}
Given a random variable $\xi\in L^{\infty}(\mathcal{F}_T)$, there
exists a sequence $n_m$, $m \in\mathbb{N}$,
of positive integers together with times $0\le s^m_1< \cdots<s^m_{n_m}
\le T$ and Lipschitz-continuous
functions $\varphi_m \dvtx  \mathbb{R}^{d \times n_m} \to\mathbb{R}$
bounded by $\llVert \xi\rrVert_{\infty}$
such that the random variables $\xi_m := \varphi_m(W_{s_1},\ldots
,W_{s_{n_m}})$
converge to $\xi$ almost surely. It follows from (f1)--(f5) that the
driver $f$ satisfies
\eqref{f1}--\eqref{f3}. So one obtains from Theorem~\ref{Morlais}
that there exist unique solutions $(Y, Z)$
and $(Y_m,Z_m)$ to the \mbox{BSDEs} corresponding to $(f,\xi)$ and $(f,\xi_m)$, respectively, such that
$Y$ and $Y_m$ are bounded. Since for fixed $m$, $\varphi_m$ is bounded
and Lipschitz-continuous, one can apply
Proposition~\ref{PropLipschconv} and choose $N_m \in\mathbb{N}$
increasing in $m$ such that
for all $N \ge N_m$, one has
\begin{eqnarray*}
&& \mathbb{E} \Biggl[\sup_t \Biggl(\bigl|Y^N_{m,t}
- Y_{m,t}\bigr|^2 + \biggl|\int_0^t
Z^N_{m,s} \,\mathrm{d}W^N_s-\int
_0^t Z_{m,s} \,\mathrm{d}W_s
\biggr|^2 + \bigl|M^N_{m,t}\bigr|^2
\\
&&\hspace*{32pt}{} + \sum_{k=1}^d \biggl| \int
_0^t Z^{N,k}_{m,s} \,\mathrm{d} \bigl
\anglel W^N\bigr\angler_s-\int_0^t
Z^k_{m,s} \,\mathrm{d}s \biggr|^2
\\
&&\hspace*{32pt} {}+ \biggl|\int_0^t \bigl|Z^N_{m,s}\bigr|^2\,
\mathrm{d} \bigl\anglel W^N\bigr\angler_s-\int
_0^t |Z_{m,s}|^2 \,\mathrm{d}s \biggr|
\Biggr) \Biggr] \le\frac{1}{m},
\end{eqnarray*}
where $(Y^N_m,Z^N_m,M^N_m)$ is the unique solution to the $N$th
BS$\Delta$E with
driver $f^N$ and terminal condition $\xi^N_m := \varphi_m(W^N_{s_1},\ldots,W^N_{s_{n_m}})$.
Now set $\tilde{\xi}^N := \xi^N_{m_N}$ and
$(\tilde{Y}^N,\tilde{Z}^N,\tilde{M}^N) :=
(Y^N_{m_N},Z^N_{m_N},M^N_{m_N})$, where for given $N$,
$m_N$ is the largest $m$ satisfying $N_m \le N$. Then $\lim_{N \to
\infty} m_N = \infty$, and therefore,
\begin{eqnarray*}
&& \mathbb{E} \Biggl[\sup_t \Biggl( \bigl|\tilde{Y}_t^N
- Y_{m_N,t}\bigr|^2 +\biggl |\int_0^t
\tilde{Z}^N_s \,\mathrm{d}W^N_s-\int
_0^t Z_{m_N,s} \,\mathrm{d}W_s
\biggr|^2 +\bigl|\tilde{M}^N_t\bigr|^2
\\
&& \hspace*{32pt}{}+ \sum_{k=1}^d \biggl|\int
_0^t \tilde{Z}^{N,k}_s\, \mathrm{d}
\bigl\anglel W^N\bigr\angler_s -\int_0^t
Z^k_{m_N,s} \,\mathrm{d}s\biggr |^2
\\
&&\hspace*{32pt}{} + \biggl|\int_0^t \bigl|\tilde{Z}^N_s\bigr|^2
\,\mathrm{d}\bigl\anglel W^N\bigr\angler_s-\int
_0^t |Z_{m_N,s}|^2 \,\mathrm{d}s \biggr|
\Biggr) \Biggr] \stackrel{(N \rightarrow\infty)} {\rightarrow } 0.
\end{eqnarray*}
In particular,
\[
\sup_{t\in[0,T]} \bigl|\tilde{M}^N_t\bigr| \stackrel{(N
\rightarrow\infty )} {\to} 0 \qquad\mbox{in }L^2.
\]
Moreover, it follows from Proposition~\ref{propBH} and Remark \ref
{variationHu} that
\[
\sup_t |Y_{m_N,t} - Y_t| \to0\qquad \mbox{a.s.}\quad
\mbox{and}\quad {\mathbb {E}} \biggl[\int_0^T
|Z_{m_N,s} - Z_s|^2 \,\mathrm{d}s \biggr] \to0.
\]
This implies \eqref{Th1a}--\eqref{Th1b}.

If
\[
\xi=\varphi(W_{s_1},\ldots,W_{s_n}) \quad\mbox{and}\quad
\xi^N=\varphi\bigl(W^N_{s_1},
\ldots,W^N_{s_n}\bigr)
\]
for a bounded, uniformly continuous function $\varphi\dvtx  \mathbb{R}^{d
\times n} \to\mathbb{R}$,
there exist Lipschitz-continuous functions
$\varphi_m \dvtx  \mathbb{R}^{d \times N} \to\mathbb{R}$ bounded by
$\llVert \varphi\rrVert_{\infty}$
such that $\sup_{x \in\mathbb{R}^{d \times n}} |\varphi_m(x) -
\varphi(x)| \le1/m$.
Choose $m_N$ as in the first part of the proof and set
\[
\tilde{\xi}^N := \varphi_{m_N}\bigl(W^N_{s_1},
\ldots,W^N_{s_n}\bigr).
\]
One then obtains as above that
\[
\sup_t \bigl|\tilde{Y}^N_t - Y_t\bigr|
\to0 \qquad\mbox{in } L^2 \mbox{ as } N \to \infty.
\]
By Lemma~\ref{infterm2}, there exists an $N_0 \in\mathbb{N}$
and a constant $D \in\mathbb{R}_+$ such that for $N \ge N_0$,
\[
\sup_t \bigl|Y^N_t- \tilde{Y}^N_t\bigr|
\le D\bigl\Vert\xi^N-\tilde{\xi}^N\bigr\Vert_\infty.
\]
Hence,
\[
\sup_t \bigl|Y^N_t - \tilde{Y}^N_t\bigr|
\to0 \qquad\mbox{in } L^2 \mbox{ for } N \to\infty,
\]
and one can conclude that
\[
\sup_t \bigl|Y^N_t - Y_t\bigr| \to0\qquad
\mbox{in } L^2 \mbox{ for } N \to\infty.
\]
\upqed\end{pf}

In the following corollary, we denote by $C^d[0,T]$ the set of
all continuous functions from $[0,T]$ to $\mathbb{R}^d$ and assume that
the driver $f$ is of the form
%
\begin{equation}
\label{ftildef} f(t,y,z) = \tilde{f}(t,W,y,z)
\end{equation}
for a measurable function $\tilde{f} \dvtx  [0,T] \times C^d[0,T] \times
\mathbb{R}
\times\mathbb{R}^d \to\mathbb{R}$ that is left-continuous in $t$
and for which there exists a $q \in[1,2)$ such that
conditions \eqref{e9}--\eqref{f4} are satisfied:
%
\begin{eqnarray}\label{e9} \bigl|\tilde{f}(t,w,y,z)\bigr|  &\le& K\bigl(1+|y|+|z|^q\bigr)
\qquad\mbox{for all } t,w,y,z,
\\
\label{e10}  \bigl|\tilde{f}(t,w,y_1,z)-\tilde{f}(t,w,y_2,z)\bigr|
&\le& K|y_1-y_2|\qquad \mbox{for all } t,w,y_1,
y_2,z.
\end{eqnarray}
For every $a \in\mathbb{R}_+$ there exists $b \in\mathbb{R}_+$ such that
%
\begin{equation}\label{e11} \bigl|\tilde{f}(t,w,y,z_1)- \tilde{f}(t,w,y,z_2)\bigr|
\le b\bigl(1+\bigl(|z_1| \vee |z_2|\bigr)^{q/2}\bigr)
|z_1-z_2|
\end{equation}
for all $t \in[0,T]$, $y \in[-a,a]$ and $z_1,z_2 \in\mathbb{R}^d$.

There exists a constant $L \in\mathbb{R}_+$ such that
%
\begin{equation}
\label{f4} \bigl|\tilde{f}(t,w_1,y,z) - \tilde{f}(t,w_2,y,z)\bigr|
\le L \sup_{s \le t} \bigl|w_1(s) - w_2(s)\bigr|\qquad \mbox{for all }
t,w_1,w_2,y,z.
\end{equation}
We also assume that the discrete-time drivers $f^N$ are of the form
%
\begin{equation}
\label{fNtildef} f^N\bigl(t,W^N,y,z\bigr) = \tilde{f}
\bigl(t^N_{i+1},W^{N,c},y,z\bigr) \qquad\mbox{for }
t^N_i < t \le t^N_{i+1},
\end{equation}
where $W^{N,c}$ is the following continuous approximation of $W^N$:
Set $h^N = \sup_i |t^N_i - t^N_{i-1}|$ and
\[
W^{N,c}_t = \cases{ 0, &\quad for $t \le h^N$,
\cr
W^N_{t^N_{i-1}} + \dfrac{t - (t^N_{i-1} + h^N)}{t^N_i - t^N_{i-1}} \bigl(W^N_{t^N_i}
- W^N_{t^N_{i-1}}\bigr), &\quad for $t^N_{i-1} +
h^N \le t \le t^N_i + h^N$. }
\]
Note that $W^{N,c}$ is adapted to the filtration $(\mathcal{F}^N_t)$
and $f^N(t^N_{i+1},W^N,y,z)$ only depends on
$W^N_{t^N_1}, \ldots, W^N_{t^N_i}$.

\begin{corollary} \label{coro1}
Assume the $W^N$ fulfill \emph{(C1)}, \emph{(C2)} and \emph{(W1)} for some
$q \in[1,2)$,
but instead of \emph{(C3)} they converge to $W$ in distribution and satisfy
$\sup_N {\mathbb{E}} [\sup_t |W^N_t|^{2+\varepsilon} ] <
\infty$ for some
$\varepsilon> 0$.
Furthermore, suppose $f$ and $f^N$ are of the form \eqref{ftildef} and
\eqref{fNtildef},
respectively. Then for every $\xi\in L^{\infty}(\mathcal{F}_T)$,
there exists a sequence of
$\mathcal{F}^N_T$-measurable random variables $\tilde{\xi}^N$
bounded by $\llVert \xi\rrVert_{\infty}$ such that for $N$ large enough,
the $N$th BS$\Delta$E with terminal condition $\tilde{\xi}^N$ has a
unique solution
$(\tilde{Y}^N, \tilde{Z}^N, \tilde{M}^N)$ and
\[
\sup_t \bigl|\tilde{Y}^N_t - Y_t\bigr|
\to0 \qquad\mbox{in distribution for } N \to \infty,
\]
where $(Y,Z)$ is the unique solution of the BSDE \eqref{bsde} with
bounded $Y$.
In the special case, where $\xi= \varphi(W_{s_1},\ldots,W_{s_{n}})$
for a uniformly continuous function $\varphi\dvtx  \mathbb{R}^{d \times n}
\to\mathbb{R}$,
one can choose $\tilde{\xi}^N = \varphi(W^N_{s_1},\ldots,W^N_{s_{n}})$.
\end{corollary}

\begin{pf}
It can be shown as in Example~\ref{ExBernoulli} that there exists
a probability space $(\tilde{\Omega}, \tilde\mathcal{F}, \tilde{\p})$
supporting a $d$-dimensional Brownian motion $\tilde{W}$ and
random walks $\tilde{W}^N$ with the same distributions as
$W^N$ such that ${\mathbb{E}} [\sup_t |\tilde{W}^N_t - \tilde
{W}_t|^2 ]\to0$
for $N \to\infty$. Then
\[
\sup_t \bigl|\tilde{f}\bigl(t,\tilde{W}^{N,c},y,z\bigr) -
\tilde{f}(t,\tilde {W},y,z)\bigr| \to0 \qquad\mbox{in } L^2 \mbox{ for } N \to
\infty
\]
and it follows from Theorem~\ref{thmone} that
for every $\tilde{\xi} \in L^{\infty}(\tilde\mathcal{F}_T)$ one can
choose $\mathcal{F}^N_T$-measurable terminal conditions $\tilde{\xi
}^N$ bounded by $\llVert \xi\rrVert_{\infty}$
such that the corresponding solutions satisfy
$\sup_t |\tilde{Y}^N_t - \tilde{Y}_t| \to0$ in $L^2$ as $N \to
\infty$.
Furthermore, if $\tilde{\xi}$ is of the form
$\tilde{\xi} = \varphi(\tilde{W}_{s_1}, \ldots, \tilde{W}_{s_n})$
for a
uniformly continuous function $\varphi\dvtx  \mathbb{R}^{d \times n} \to
\mathbb{R}$,
one can choose $\tilde{\xi}^N = \varphi(\tilde{W}^N_{s_1}, \ldots,
\tilde{W}^N_{s_n})$.
This proves the corollary.
\end{pf}

\begin{example}
In the setting of Corollary~\ref{coro1}, let $\xi= \varphi(W_T)$ and
$\xi^N = \varphi(W^N_T)$ for
\[
\varphi(x) = \cases{ \sqrt{x}\wedge1, &\quad if $x \geq0$,
\cr
-\sqrt{-x}\vee-1, &\quad if $x
< 0$. }
\]
Then for every function $\tilde{f}$ satisfying \eqref{e9}--\eqref
{f4} the corresponding solutions $Y^N$ converge to $Y$ in
distribution. Let us illustrate this result for the example
\[
\tilde{f}(t,w,y,z) = K_1 y+ K_2 |z|^{3/2}.
\]
Let $T =1$ and $W^N$ be the Bernoulli random walks from Example \ref
{ExBernoulli}. Then the
discrete equations can numerically be solved using formulas \eqref
{Yformula}--\eqref{Zformula}.

\begin{figure}
\centering
\begin{tabular}{@{}cc@{}}

\includegraphics{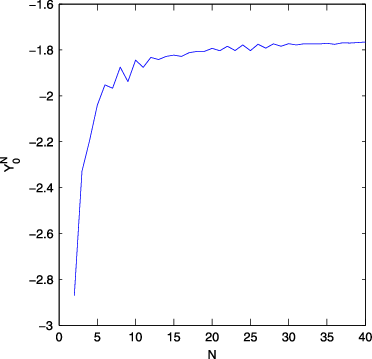}
&\includegraphics{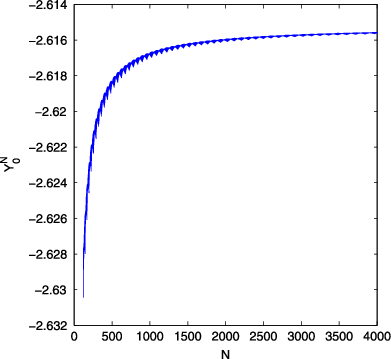}\\
\footnotesize{(a)}&\footnotesize{(b)}
\end{tabular}
\caption{(a) $Y^N_0$ corresponding to $K_1=1$ and $K_2=1$. (b) $Y^N_0$ corresponding to $K_1=1$ and $K_2=5$.}\label{figthree}
\end{figure}
Figure~\ref{figthree}(a) and (b) show the convergence of $Y^N_0$ for different
values of $K_1$ and $K_2$. It can be seen that for
$(K_1,K_2) = (1,1)$, $Y^N_0$ converges rather fast. Already for $N=20$,
it is close to the limit value. On the other hand, for
$(K_1, K_2) = (1,5)$, the convergence is much slower.
\end{example}

\section{Convergence results for convex drivers}\label{secvex}

In this section, we consider BS$\Delta$Es with drivers that are
convex in $z$ and use convex duality to derive stronger convergence
results than in
Section~\ref{secconvergence}. For the case where
$f$ does not depend on $y$ it has been shown in Barrieu and El Karoui
\cite{2}, Delbaen \textit{et~al.}~\cite{14} and Delbaen \textit{et al.}~\cite{15} that
BSDEs with convex drivers admit a convex dual representation.
Here, we establish convex dual representations for solutions of
BS$\Delta$Es and use them to show convergence.
We need the following stronger version of condition (W1) on the
approximating processes $W^N$:
\begin{enumerate}[(W2)]
\item[(W2)] ${\mathbb{E}} [\Delta W^{N,k}_{t^N_i} \Delta
W^{N,l}_{t^N_i} ] = 0$
for all $N \in\mathbb{N}$, $i=1,\ldots,i_N$, $k \neq l$ and
\[
\sup_{N,i,k} \frac{\llVert \Delta W^{N,k}_{t^N_i} \rrVert_{\infty
}}{\sqrt {\Delta\anglel W^{N}\angler_{t^N_i}}} < \infty.
\]
\end{enumerate}
Note that this implies (W1) for all $q \in[1,2)$.
In the following, we assume that the drivers $f^N$ are convex in $z$
and define
\[
g^N(t,y,\mu) := \mathop{\ess\sup}_z \bigl\{\mu
z-f^N(t,y,z) \bigr\},\qquad \mu \in\mathbb{R}^d.
\]
Let $\mu^N$ be an $\mathbb{R}^d$-valued $(\F^N_t)$-adapted process
that is constant on the intervals $(t^N_{i-1}, t^N_i]$ and satisfies
%
\begin{equation}
\label{dens} \mu^N_{t^N_i} \Delta W^N_{t^N_i}
> -1\qquad \mbox{for all } i.
\end{equation}
Then
%
\begin{equation}
\label{dens2} \frac{\mathrm{d}\p^{\mu^N}}{\mathrm{d}\p} = \prod_{i=1}^{i_N}
\bigl(1 + \mu^N_{t^N_i} \Delta W^{N}_{t^N_i}
\bigr)
\end{equation}
defines a probability measure $\p^{\mu^N}$ equivalent to $\p$ under
which the processes
\[
W^{N, \mu^N,k}_{t^N_i} =W^{N,k}_{t^N_i} - \sum
_{j=1}^i \mu^k_{t^N_j}\Delta
\bigl\anglel W^{N}\bigr\angler_{t^N_j},\qquad k = 1, \ldots , d,
\]
are martingales. The following proposition gives an implicit dual
representation of solutions of BS$\Delta$Es. Its proof can be found in
the \hyperref[app]{Appendix}.

\begin{proposition} \label{gN}
Assume \emph{(W2)} and let $C,K,L \in\mathbb{R}_+$,
$q \in[1,2)$ be constants such that all terminal conditions $\xi^N$
and drivers $f^N$ fulfill the following conditions:
\begin{enumerate}[(iii)]
\item[(i)]
$\llVert \xi^N \rrVert_{\infty} \le C$;
\item[(ii)]
$f^N$ is convex in $z$;
\item[(iii)]
$|f^N(t,y,z)| \le K(1+|y|+|z|^q)$ for all $(t,y,z) \in[0,T] \times
\mathbb{R}^{d+1}$;
\item[(iv)]
$|f^N(t,y_1,z) - f^N(t,y_2,z)| \le L |y_1-y_2|$
for all $(t,y,z) \in[0,T] \times\mathbb{R}^{d+1}$;
\item[(v)]
$|f^N(t,y,z_1)-f^N(t,y,z_2)| \le L(1+(|z_1| \vee|z_2|)^{q/2})
|z_1-z_2|$
for all $(t,y,z_1,z_2) \in[0,T] \times\mathbb{R}^{2d +1}$ such that
$|y| \le(C+1) \exp(K T)$.
\end{enumerate}
Then there exists $N_0 \in\mathbb{N}$ such that for every $N \geq N_0$,
the $N$th BS$\Delta$E has a unique solution $(Y^N,Z^N,M^N)$ and $Y^N$ can
be represented as
%
\begin{equation}
\label{dual} Y^N_{t^N_i} = \mathop{\ess\sup}_{\mu^N} \mathbb{E}^{\mu^N} \Biggl[\xi^N- \sum
_{j = i+1}^{i_N} g^N
\bigl(t^N_{j},Y^N_{t^N_{j-1}},
\mu^N_{t^N_{j}}\bigr)\Delta\bigl\anglel W^N\bigr
\angler_{t^N_{j}}\Big|\F^N_{t^N_i} \Biggr],
\end{equation}
where the essential supremum is taken over all $\mathbb{R}^d$-valued
$(\mathcal{F}^N_t)$-adapted processes $\mu^N$ that are constant on
the intervals $(t^N_{i-1}, t^N_i]$ and satisfy \eqref{dens}.
Moreover, there exists a constant $R \in\mathbb{R}_+$
such that for each $N \ge N_0$, \eqref{dual} admits a maximizer $\hat
{\mu}^N$
satisfying
%
\begin{equation}
\label{Repsilon} \mathbb{E}^{\hat{\mu}^N} \Biggl[\sum
_{j = i +1}^{i_N} |\hat{\mu }_{t^N_j}|^2
\Delta\bigl\anglel W^N\bigr\angler_{t^N_j}\Big |
\F^N_{t^N_i} \Biggr] \le R \qquad\mbox{for all } i \le
i_N-1.
\end{equation}
\end{proposition}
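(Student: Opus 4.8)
The plan is to read off \eqref{dual} by applying Fenchel--Legendre duality step by step to the recursion for $(Y^N_t)$ and running a backward induction under the measures $\p^{\mu^N}$, and to obtain the maximiser from a measurable selection of a subgradient of $f^N(t,y,\cdot)$ in $z$. To begin, note that (W2) implies (W1) for every $q\in[1,2)$, and that conditions (i), (iii)--(v) imply conditions (i)--(iv) of Theorem \ref{thmcomp} with the same constants $C,K,L$. Hence there is $N_0$ such that for all $N\ge N_0$ the $N$-th BS$\Delta$E has a unique solution $(Y^N_t,Z^N_t,M^N_t)$ with $|Y^N_t|\le D:=(C+1)\exp(KT)$; enlarging $N_0$ we may also assume $\ang{W^N}_T\le T+1$. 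From \eqref{Zformula}, the conditional Cauchy--Schwarz inequality and (A2) one gets the pointwise bound $|Z^N_{t^N_j}|\le\sqrt d\,D\,\Delta\ang{W^N}_{t^N_j}^{-1/2}$, as in \eqref{boundZm}.

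For any admissible $\mu^N$ (which is automatically $(\F^N_t)$-predictable), \eqref{Yformula} gives $Y^N_{t^N_i}=\E{Y^N_{t^N_{i+1}}\mid\F^N_{t^N_i}}+f^N(t^N_{i+1},Y^N_{t^N_i},Z^N_{t^N_{i+1}})\Delta\ang{W^N}_{t^N_{i+1}}$, while the Bayes rule for $\p^{\mu^N}$ and \eqref{Zformula} give $\E{Y^N_{t^N_{i+1}}\mid\F^N_{t^N_i}}=\EmuN{Y^N_{t^N_{i+1}}\mid\F^N_{t^N_i}}-(\mu^N_{t^N_{i+1}}\cdot Z^N_{t^N_{i+1}})\Delta\ang{W^N}_{t^N_{i+1}}$. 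Combining these and using $f^N(t,y,z)-\mu z\ge-g^N(t,y,\mu)$ yields
$$
Y^N_{t^N_i}\ \ge\ \EmuN{Y^N_{t^N_{i+1}}\mid\F^N_{t^N_i}}-g^N(t^N_{i+1},Y^N_{t^N_i},\mu^N_{t^N_{i+1}})\,\Delta\ang{W^N}_{t^N_{i+1}},
$$
with equality when $\mu^N_{t^N_{i+1}}\in\partial_z f^N(t^N_{i+1},Y^N_{t^N_i},Z^N_{t^N_{i+1}})$. Backward induction in $i$ --- using the tower property of $\EmuN{\cdot\mid\F^N_{t^N_i}}$ and the $\F^N_{t^N_i}$-measurability of $g^N(t^N_{i+1},Y^N_{t^N_i},\mu^N_{t^N_{i+1}})\Delta\ang{W^N}_{t^N_{i+1}}$ --- then gives $Y^N_{t^N_i}\ge\EmuN{\xi^N-\sum_{j>i}g^N(t^N_j,Y^N_{t^N_{j-1}},\mu^N_{t^N_j})\Delta\ang{W^N}_{t^N_j}\mid\F^N_{t^N_i}}$ for every admissible $\mu^N$. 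Since $-g^N(t,y,\mu)\le f^N(t,y,0)\le K(1+D)$, the right-hand side is bounded above by $C+K(1+D)(T+1)$, so the essential supremum in \eqref{dual} is well defined and does not exceed $Y^N_{t^N_i}$.

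For the reverse inequality, pick by a measurable selection $\hat\mu^N_{t^N_j}\in\partial_z f^N(t^N_j,Y^N_{t^N_{j-1}},Z^N_{t^N_j})$; this set is non-empty because $f^N(t^N_j,\cdot)$ is a finite convex function, and the selection is $\F^N_{t^N_{j-1}}$-measurable since $f^N(t^N_j,\cdot)$, $Y^N_{t^N_{j-1}}$ and $Z^N_{t^N_j}$ are, so $(\hat\mu^N_t)$ is predictable and constant on the intervals $(t^N_{j-1},t^N_j]$. As $|Y^N_{t^N_{j-1}}|\le D$, condition (v) shows $z\mapsto f^N(t^N_j,Y^N_{t^N_{j-1}},z)$ is locally Lipschitz, whence $|\hat\mu^N_{t^N_j}|\le L(1+|Z^N_{t^N_j}|^{q/2})\le L\brak{1+(\sqrt d\,D)^{q/2}\Delta\ang{W^N}_{t^N_j}^{-q/4}}$, and therefore
$$
\N{\hat\mu^N_{t^N_j}\cdot\Delta W^N_{t^N_j}}_\infty\ \le\ L\sqrt d\,\max_k\N{\Delta W^{N,k}_{t^N_j}}_\infty+L\sqrt d\,(\sqrt d\,D)^{q/2}\,\frac{\max_k\N{\Delta W^{N,k}_{t^N_j}}_\infty}{\Delta\ang{W^N}_{t^N_j}^{q/4}}.
$$
By (W1) together with $\sup_j\Delta\ang{W^N}_{t^N_j}\to0$, the right-hand side tends to $0$ uniformly in $j$, so after possibly enlarging $N_0$ the process $(\hat\mu^N_t)$ satisfies \eqref{dens} and is admissible. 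Running the induction of the previous paragraph with $\mu^N=\hat\mu^N$ turns every inequality into an equality, which is \eqref{dual} with maximiser $\hat\mu^N$.

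Finally, \eqref{Repsilon} follows from a conditional energy estimate. Squaring the recursion $Y^N_{t^N_j}=Y^N_{t^N_{j-1}}-f^N\Delta\ang{W^N}_{t^N_j}+Z^N_{t^N_j}\Delta W^N_{t^N_j}+\Delta M^N_{t^N_j}$ and taking $\E{\cdot\mid\F^N_{t^N_{j-1}}}$, the martingale and orthogonality cross-terms vanish, and (W2) with (A2) give $\E{(Y^N_{t^N_j})^2\mid\F^N_{t^N_{j-1}}}\ge(Y^N_{t^N_{j-1}}-f^N\Delta\ang{W^N}_{t^N_j})^2+|Z^N_{t^N_j}|^2\Delta\ang{W^N}_{t^N_j}$. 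Using $|Y^N|\le D$, $|f^N|\le K(1+D+|Z^N_{t^N_j}|^q)$ and absorbing the term $|Z^N_{t^N_j}|^q\Delta\ang{W^N}_{t^N_j}$ by Young's inequality (legitimate because $q<2$), one obtains $|Z^N_{t^N_j}|^2\Delta\ang{W^N}_{t^N_j}\le2\brak{\E{(Y^N_{t^N_j})^2\mid\F^N_{t^N_{j-1}}}-(Y^N_{t^N_{j-1}})^2}+c_1\Delta\ang{W^N}_{t^N_j}$ for a constant $c_1=c_1(C,K,q,T)$; taking $\E{\cdot\mid\F^N_{t^N_i}}$ and summing over $j>i$ telescopes to a bound on $\E{\sum_{j>i}|Z^N_{t^N_j}|^2\Delta\ang{W^N}_{t^N_j}\mid\F^N_{t^N_i}}$ depending only on $C,K,q,T$. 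Since $|\hat\mu^N_{t^N_j}|^2\le2L^2(1+|Z^N_{t^N_j}|^q)\le2L^2(2+|Z^N_{t^N_j}|^2)$, this yields \eqref{Repsilon} with $R$ depending only on $C,K,L,q,T,d$. The main difficulty is the admissibility step: the selected subgradient $\hat\mu^N$ may grow like $\Delta\ang{W^N}^{-q/4}$, and it is precisely assumption (W1) (contained in (W2)) that keeps $\hat\mu^N\cdot\Delta W^N$ small enough for the density \eqref{dens2} to be well defined; the energy estimate is routine but uses $q<2$ to absorb the near-quadratic term.
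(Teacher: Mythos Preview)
Your argument for the dual representation \eqref{dual} and for constructing and verifying the admissibility of the maximiser $\hat\mu^N$ follows essentially the same lines as the paper (the paper packages the subgradient selection into a separate Lemma~\ref{supergrad} and keeps track of dimension factors more carefully in the bound $|\hat\mu^N|\le\sqrt d\,L(1+|Z^N|^{q/2})$, but the substance is identical).

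The genuine difference is in \eqref{Repsilon}. You obtain a conditional energy bound on $\sum_{j>i}|Z^N_{t^N_j}|^2\Delta\ang{W^N}_{t^N_j}$ under $\p$ by squaring the one-step recursion, exploiting orthogonality and (W2) to kill cross-terms, and telescoping; you then transfer this to $\hat\mu^N$ via $|\hat\mu^N_{t^N_j}|^2\lesssim 1+|Z^N_{t^N_j}|^2$. This is correct and more direct. The paper instead exploits the dual representation itself: from $g^N(t,y,\mu)=\sup_z\{\mu z-f^N(t,y,z)\}\ge -K(1+|y|)+c\,|\mu|^{q/(q-1)}_{q/(q-1)}$ (the Legendre transform of the growth bound (iii)) together with the fact that $\hat\mu^N$ attains the supremum in \eqref{dual}, one bounds $\mathbb{E}^{\hat\mu^N}\bigl[\sum_{j>i}g^N\,\Delta\ang{W^N}_{t^N_j}\mid\F^N_{t^N_i}\bigr]$ and hence $\mathbb{E}^{\hat\mu^N}\bigl[\sum_{j>i}|\hat\mu^N_{t^N_j}|^2\Delta\ang{W^N}_{t^N_j}\mid\F^N_{t^N_i}\bigr]$. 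Note the measure: the paper's route delivers the bound under $\p^{\hat\mu^N}$, and it is \emph{this} version that is quoted verbatim in the proof of Theorem~\ref{thmtwo} and fed into Lemma~\ref{2mubound}. Your $\p$-energy estimate matches \eqref{Repsilon} as literally stated, but it does not immediately yield the $\p^{\hat\mu^N}$-version, since under $\p^{\hat\mu^N}$ the process $W^N$ acquires a drift and the cross-terms in your squaring argument no longer vanish.
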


We are now ready to prove our convergence result for convex drivers. It
states that for any sequence of bounded discrete-time terminal
conditions converging to $\xi$ and every sequence of discrete-time
drivers converging to $f$ the discrete-time solutions $Y^N$ converge to the
continuous-time solution $Y$.

\begin{theorem} \label{thmtwo}
Assume \emph{(W2)}, the $f^N(t,y,z)$ are convex in $z$ and one has
$\sup_N \llVert \xi^N \rrVert_{\infty} < \infty$ as well as $\xi^N \to\xi$
in $L^2$.
Moreover, suppose the $f^N$ satisfy \emph{(f1)--(f5)}. Then for $N$
large enough, the $N$th BS$\Delta$E
has a unique solution $(Y^N,Z^N,M^N)$ and
\[
\sup_t \bigl|Y^N_t-Y_t\bigr| \rightarrow0\qquad
\mbox{in $L^2$ for } N \to\infty,
\]
where $(Y,Z)$ is the unique solution of the BSDE \eqref{bsde} with
bounded $Y$.
\end{theorem}

\begin{pf}
By Theorem~\ref{thmone}, there exist $\mathcal{F}^N_T$-measurable
terminal conditions
$\tilde{\xi}^N$ bounded by $C := \sup_N \llVert \xi^N \rrVert_{\infty}$ such
that the corresponding solutions satisfy
\[
\sup_t \bigl|\tilde{Y}^N_t - Y_t\bigr|
\to0 \qquad\mbox{in } L^2.
\]
Choose $b \in\mathbb{R}_+$ such that condition (f3) holds for
$a = (C+1) \exp(KT)$. Then the conditions of Theorem~\ref{thmcomp} and
Proposition~\ref{gN} are satisfied with $L = K \vee b$. Hence, there exists
$N_0 \in\mathbb{N}$ such that for all $N \ge N_0$,
$\sup_t|Y^N_t|$ and $\sup_t|\tilde{Y}^N_t|$ are bounded by $(C+1)
\exp(KT)$
and
\begin{eqnarray*}
Y^N_{t^N_i} &=& \mathop{\ess\sup}_\mu{
\mathbb{E}}^\mu \Biggl[\xi^N - \sum
_{j=i+1}^{i_N} g^N\bigl(t^N_{j},Y^N_{t^N_{j-1}},
\mu_{t^N_{j}} \bigr) \Delta\bigl\anglel W^N\bigr
\angler_{t^N_{j}} \Big| \mathcal{F}^N_{t^N_i} \Biggr]
\\
&=& \mathbb{E}^{\hat{\mu}^N} \Biggl[\xi^N - \sum
_{j=i+1}^{i_N} g^N\bigl(t^N_{j},Y^N_{t^N_{j-1}},
\hat{\mu}^N_{t^N_{j}} \bigr) \Delta\bigl\anglel W^N
\bigr\angler_{t^N_{j}} \Big| \mathcal{F}^N_{t^N_i} \Biggr]
\end{eqnarray*}
as well as
\begin{eqnarray*}
\tilde{Y}^N_{t^N_i} &=& \mathop{\ess\sup}_\mu{
\mathbb{E}}^\mu \Biggl[\tilde{\xi}^N - \sum
_{j=i+1}^{i_N} g^N\bigl(t^N_{j},
\tilde {Y}^N_{t^N_{j-1}},\mu_{t^N_{j}} \bigr) \Delta\bigl
\anglel W^N\bigr\angler_{t^N_{j}} \Big| \mathcal{F}^N_{t^N_i}
\Biggr]
\\
&=& \mathbb{E}^{\tilde{\mu}^N} \Biggl[\tilde{\xi}^N - \sum
_{j=i+1}^{i_N} g^N\bigl(t^N_{j},
\tilde{Y}^N_{t^N_{j-1}},\tilde{\mu }^N_{t^N_{j}}
\bigr) \Delta\bigl\anglel W^N\bigr\angler_{t^N_{j}} \Big| \mathcal
{F}^N_{t^N_i} \Biggr].
\end{eqnarray*}
If we can show
\[
\sup_t \bigl| \tilde{Y}^N_t-Y^N_t\bigr|
\to0 \qquad\mbox{in } L^2,
\]
we get
\[
\sup_t \bigl|Y^N_t-Y_t\bigr| \to0 \qquad\mbox{in
} L^2,
\]
and the theorem is proved. As the supremum of $K$-Lipschitz functions, $g^N$
is again $K$-Lipschitz in $y$. Hence, since
$|\max \{a_1, a_2 \} - \max \{b_1, b_2 \}| \le
\max \{|a_1-b_1|,|a_2-b_2| \}$
for $a_1,a_2,b_1,b_2 \in\mathbb{R}$, and
\begin{eqnarray*}
Y^N_{t^N_i} &=& \max_{\mu\in \{\hat{\mu}^N,\tilde{\mu
}^N \}} {\mathbb{E}}^\mu
\Biggl[\xi^N -\sum_{j=i+1}^{i_N}
g^N\bigl(t^N_{j},Y^N_{t^N_{j-1}},
\mu_{t^N_{j}}\bigr) \Delta\bigl\anglel W^N\bigr
\angler_{t^N_{j}} \Big\mid\mathcal{F}^N_{t^N_i} \Biggr],
\\
\tilde{Y}^N_{t^N_i} &=& \max_{\mu\in\{\hat{\mu}^N,\tilde{\mu
}^N\}} {
\mathbb{E}}^\mu \Biggl[\tilde{\xi}^N -\sum
_{j=i+1}^{i_N} g^N\bigl(t^N_{j},
\tilde{Y}^N_{t^N_{j-1}},\mu_{t^N_{j}} \bigr) \Delta\bigl
\anglel W^N\bigr\angler_{t^N_{j}} \Big| \mathcal{F}^N_{t^N_i}
\Biggr],
\end{eqnarray*}
one obtains
\begin{eqnarray*}
\bigl|\tilde{Y}^N_{t^N_i}-Y^N_{t^N_i}\bigr| &\le&
\max_{\mu\in \{\hat{\mu}^N,\tilde{\mu}^N \}} {\mathbb{E}}^\mu \Biggl[\bigl|\tilde{\xi}^N
-\xi^N\bigr| + K\sum_{j=i+1}^{i_N} \bigl|
\tilde{Y}_{t^N_{j-1}}^N - Y^N_{t^N_{j-1}}\bigr| \Delta
\bigl\anglel W^N\bigr\angler_{t^N_{j}} \Big\mid\mathcal{F}^N_{t^N_i}
\Biggr]
\\
&\le&\mathbb{E}^{\hat{\mu}^{N}} \Biggl[\bigl|\tilde{\xi}^N -
\xi^N\bigr| +K \sum_{j=i+1}^{i_N}\bigl |
\tilde{Y}_{t^N_{j-1}}^N -Y^N_{t^N_{j-1}}\bigr|\Delta
\bigl\anglel W^N\bigr\angler_{t^N_{j}} \Big\mid\mathcal
{F}^N_{t^N_i} \Biggr]
\\
&& {}+\mathbb{E}^{\tilde{\mu}^{N}} \Biggl[\bigl|\tilde{\xi}^N -
\xi^N\bigr| +K\sum_{j=i+1}^{i_N}\bigl |
\tilde{Y}_{t^N_{j-1}}^N-Y^N_{t^N_{j-1}}\bigr |\Delta
\bigl\anglel W^N\bigr\angler_{t^N_{j}} \Big\mid\mathcal{F}^N_{t^N_i}
\Biggr].
\end{eqnarray*}
From Proposition~\ref{gN}, we know that there exists a constant $R \in
\mathbb{R}_+$ such that
\[
\mathbb{E}^{\hat{\mu}^{N}} \Biggl[\sum_{j=i+1}^{i_N}
\bigl|\hat{\mu}^{N}_{t^N_{j}}\bigr|^2 \Delta\bigl\anglel
W^N\bigr\angler_{t^N_{j}}\Big |\F^N_{t^N_i}
\Biggr] \le R\qquad \mbox{for all } N \ge N_0 \mbox{ and } i=0,
\ldots,i_N-1.
\]
Consequently, we obtain from Lemma~\ref{2mubound} in the
\hyperref[app]{Appendix} that there exists a constant $\tilde{R}$ such that
\[
{\mathbb{E}} \Biggl[\varphi \Biggl(\prod_{j = i+1}^{i_N}
\bigl(1 + \hat{\mu }^N_{t^N_j} \Delta W^N_{t^N_j}
\bigr) \Biggr) \Big| \F^N_{t^N_i} \Biggr] \le \tilde{R}\qquad \mbox{for
all } N \ge N_0 \mbox{ and } i=0,\ldots,i_N-1,
\]
where $\varphi(x) = x \log(x) \vee1$. Fix $\varepsilon>0$ and set
$D= 2[C + (C+1) \exp(KT) K \sup_{N \ge N_0} \anglel W^N\angler_T]$.
Since $\varphi(x)/x \uparrow\infty$, there exists $B \in\mathbb{R}_+$
such that for all $x> B$,
\[
\frac{x}{\varphi(x)} \le\frac{\varepsilon}{\tilde{R} D}.
\]
Introduce the sets $E^N_{i+1} = \{\prod_{j = i+1}^{i_N}(1 + \hat{\mu
}^N_{t^N_j} \Delta W^N_{t^N_j}) > B\}$.
Then
%
\begin{eqnarray}\label{estimateE}
&& \sup_{N \ge N_0 , 0 \le i \le i_N-1} \mathbb{E} \Biggl[1_{E^N_{i+1}} \prod
_{j = i+1}^{i_N} \bigl(1 + \hat{\mu}^N_{t^N_j}
\Delta W^N_{t^N_j}\bigr) \Big|\mathcal {F}^N_{t^N_i}
\Biggr]
\nonumber
\\
&&\quad= \sup_{N \ge N_0 , 0 \le i \le i_N-1} {\mathbb{E}} \Biggl[1_{E^N_{i+1}}
\frac{\prod_{j = i+1}^{i_N}(1 + \hat{\mu}^N_{t^N_j}
\Delta W^N_{t^N_j})}{\varphi(\prod_{j = i+1}^{i_N} (1 + \hat{\mu
}^N_{t^N_j} \Delta W^N_{t^N_j}))}\\[-2pt]
&&\hspace*{92pt}{}\times \varphi\Biggl(\prod_{j = i+1}^{i_N}
\bigl(1 + \hat{\mu}^N_{t^N_j} \Delta W^N_{t^N_j}
\bigr)\Biggr) \Big|\mathcal {F}^N_{t^N_i} \Biggr]\nonumber
\\[-2pt]
 &&\quad\le \frac{\varepsilon}{\tilde{R}D} \sup_{N \ge N_0 , 0 \le i \le i_N-1} {\mathbb{E}}
\Biggl[\varphi\Biggl(\prod_{j = i+1}^{i_N}\bigl(1
+ \hat{\mu }^N_{t^N_j} \Delta W^N_{t^N_j}
\bigr)\Biggr)\Big|\mathcal{F}^N_{t^N_i} \Biggr] \le
\frac{\varepsilon}{D}.\nonumber
\end{eqnarray}

This yields for all $N \ge N_0$,
\begin{eqnarray*}
&& \mathbb{E}^{\hat{\mu}^{N}} \Biggl[\bigl|\tilde{\xi}^N -
\xi^N\bigr| + K \sum_{j=i+1}^{i_N} \bigl|
\tilde{Y}_{t^N_{j-1}}^N-Y^N_{t^N_{j-1}}\bigr| \Delta
\bigl\anglel W^N\bigr\angler_{t^N_{j}} \mid\mathcal{F}^N_{t^N_i}
\Biggr]
\\[-2pt]
&&\quad= \mathbb{E} \Biggl[\prod_{j = i+1}^{i_N}
\bigl(1 + \hat{\mu}^N_{t^N_j} \Delta W^N_{t^N_j}
\bigr) \Biggl(\bigl|\tilde{\xi}^N -\xi^N\bigr| + K \sum
_{j=i+1}^{i_N} \bigl|\tilde{Y}_{t^N_{j-1}}^N-Y^N_{t^N_{j-1}}\bigr|
\Delta\bigl\anglel W^N\bigr\angler_{t^N_{j}} \Biggr) \Big|
\mathcal{F}^N_{t^N_i} \Biggr]
\\[-2pt]
&&\quad= \mathbb{E} \Biggl[ 1_{E^N_{i+1}}\prod_{j = i+1}^{i_N}
\bigl(1 + \hat {\mu}^N_{t^N_j} \Delta W^N_{t^N_j}
\bigr) \Biggl(\bigl|\tilde{\xi}^N -\xi^N\bigr| + K \sum
_{j=i+1}^{i_N} \bigl| \tilde{Y}_{t^N_{j-1}}^N-Y^N_{t^N_{j-1}}\bigr|
\Delta\bigl\anglel W^N\bigr\angler_{t^N_{j}} \Biggr) \Big|
\mathcal{F}^N_{t^N_i} \Biggr]
\\[-2pt]
&&\qquad {}+ \mathbb{E} \Biggl[1_{E^{N,c}_{i+1}}\prod_{j = i+1}^{i_N}
\bigl(1 + \hat{\mu}^N_{t^N_j} \Delta W^N_{t^N_j}
\bigr) \Biggl(\bigl|\tilde{\xi}^N -\xi^N\bigr| + K \sum
_{j=i+1}^{i_N}\bigl |\tilde {Y}_{t^N_{j-1}}^N-Y^N_{t^N_{j-1}}\bigr|
\Delta\bigl\anglel W^N\bigr\angler_{t^N_{j}} \Biggr)\Big |
\mathcal{F}^N_{t^N_i} \Biggr]
\\[-2pt]
&&\quad\le D {\mathbb{E}} \Biggl[1_{E^N_{i+1}}\prod_{j = i+1}^{i_N}
\bigl(1 + \hat{\mu}^N_{t^N_j} \Delta W^N_{t^N_j}
\bigr) |\mathcal{F}^N_{t^N_i} \Biggr]
\\[-2pt]
&&\qquad {}+ \mathbb{E} \Biggl[ 1_{E^{N,c}_{t^N_{i+1}}}\prod_{j = i+1}^{i_N}
\bigl(1 + \hat{\mu}^N_{t^N_j} \Delta W^N_{t^N_j}
\bigr) \\[-2pt]&&\hspace*{92pt}{}\times\Biggl(\bigl|\tilde{\xi}^N -\xi^N\bigr| + K \sum
_{j=i+1}^{i_N} \bigl|\tilde{Y}_{t^N_{j-1}}^N-Y^N_{t^N_{j-1}}\bigr|
\Delta\bigl\anglel W^N\bigr\angler_{t^N_{j}} \Biggr)\Big |
\F^N_{t^N_i} \Biggr]
\\[-2pt]
&&\quad\le \varepsilon+ B {\mathbb{E}} \Biggl[\bigl|\tilde{\xi}^N -
\xi^N\bigr|+K\sum_{j=i+1}^{i_N}\bigl |\tilde
{Y}_{t^N_{j-1}}^N-Y^N_{t^N_{j-1}}\big|\Delta\bigl
\anglel W^N\bigr\angler_{t^N_{j}} \Big|\F^N_{t^N_i}
\Biggr].
\end{eqnarray*}
In the first inequality, we used that the random variables
$|\tilde{\xi}^N-\xi^N|+K\sum_{j=i+1}^{i_N}
|\tilde{Y}_{t^N_{j-1}}^N-Y^N_{t^N_{j-1}}| \Delta\anglel W^N\angler_{t^N_{j}}$
are uniformly bounded by $D$. In the second inequality,
we used \eqref{estimateE} and the definition of
the sets $E^N_{i+1}$. Using the same estimate for $\tilde{\mu}^{N}$
instead of $\hat{\mu}^{N}$ gives
%
\begin{equation}
\label{last2}\bigl |\tilde{Y}^N_{t^N_i}-Y^N_{t^N_i}\bigr|
\le2 \varepsilon+2B \mathbb{E} \Biggl[\bigl|\tilde{\xi}^N -
\xi^N\bigr|+K\sum_{j=i+1}^{i_N} \bigl|
\tilde{Y}_{t^N_{j-1}}^N -Y^N_{t^N_{j-1}}\bigr|\Delta
\bigl\anglel W^N\bigr\angler_{t^N_{j}}\Big |\F^N_{t^N_i}
\Biggr].\vadjust{\goodbreak}
\end{equation}
Taking expectations, one gets
\[
\mathbb{E} \bigl[\bigl|\tilde{Y}^N_{t^N_i}- Y^N_{t^N_{i}}\bigr|
\bigr] \le2\varepsilon+2B{\mathbb{E}} \bigl[\bigl|\tilde{\xi}^N -
\xi^N\bigr| \bigr] +K\sum_{j=i+1}^{i_N}
{\mathbb{E}} \bigl[\bigl|\tilde {Y}^N_{t^N_{j-1}}-Y^N_{t^N_{j-1}}\bigr|
\bigr]\Delta\bigl\anglel W^N\bigr\angler_{t^N_{j}}.
\]
Since $\varepsilon>0$ was arbitrary, one obtains from a discrete
version of Gronwall's lemma
(see Lemma~\ref{lemmagronwall} in the \hyperref[app]{Appendix}) that
\[
\sup_t {\mathbb{E}} \bigl[\bigl|\tilde{Y}^N_t
-Y^N_t\bigr| \bigr] \to0 \qquad\mbox {as } N \to\infty,
\]
and since $Y^N$ and $\tilde{Y}^N$ are both bounded by $(C+1) \exp
(KT)$, also
%
\begin{equation}
\label{l2} \sup_t {\mathbb{E}} \bigl[\bigl|\tilde{Y}^N_t
-Y^N_t\bigr|^2 \bigr] \to0 \qquad\mbox{as } N \to
\infty.
\end{equation}
It remains to show that $\sup_t$ can be taken inside of the
expectation in \eqref{l2}.
To do this, note that \eqref{last2} gives
\begin{eqnarray*}
\sup_{i}\bigl|\tilde{Y}_{t^N_{i}}^N-Y^N_{t^N_{i}}\bigr|
\le2\varepsilon +2B \Bigl(\sup_{i}{\mathbb{E}} \bigl[\bigl|\tilde{
\xi}^N -\xi^N\bigr||\F^N_{t^N_i} \bigr]+K
\sup_{i}A^N_{t^N_i} \Bigr)
\end{eqnarray*}
for the nonnegative martingale
\[
A^N_{t^N_i}= {\mathbb{E}} \Biggl[\sum
_{j=1}^{i_N} \bigl|\tilde {Y}_{t^N_{j-1}}^N-Y^N_{t^N_{j-1}}\bigr|
\Delta\bigl\anglel W^N\bigr\angler_{t^N_{j}} \Big|
\F^N_{t^N_i} \Biggr],\qquad i=0,\ldots,i_N.
\]
Since $\varepsilon$ was arbitrary, and
$\sup_{i}{\mathbb{E}} [|\tilde{\xi}^N -\xi^N| | \mathcal
{F}^N_{t^N_i} ]\stackrel{(N \rightarrow\infty)}{\to} 0$ in
$L^2$ by
Doob's maximal inequality, the only thing left to show is
$\sup_{i} A^N_{t^N_i} \stackrel{(N \rightarrow\infty)}{\to} 0$ in $L^2.$
Applying Doob's maximal inequality to $A^N$ yields
\begin{eqnarray*}
 {\mathbb{E}} \Bigl[\sup_{i}\bigl |A^N_{t^N_i}\bigr|^2
\Bigr] &\le& 2{\mathbb{E}} \Biggl[ \Biggl(\sum_{j=1}^{i_N}
\bigl|\tilde {Y}_{t^N_{j-1}}^N-Y^N_{t^N_{j-1}} \bigr|\Delta
\bigl\anglel W^N\bigr\angler_{t^N_{j}} \Biggr)^2
\Biggr]
\\[-2pt]
&\le& 2 \bigl\anglel W^N\bigr\angler_{T} {\mathbb{E}}
\Biggl[\sum_{j=1}^{i_N} \bigl|\tilde{Y}^N_{t^N_{j-1}}-Y^N_{t^N_{j-1}}\bigr|^2
\Delta\bigl\anglel W^N\bigr\angler_{t^N_{j}} \Biggr]
\\[-2pt]
&\le& 2 \bigl(\bigl\anglel W^N\bigr\angler_{T}
\bigr)^2 \sup_t {\mathbb{E}} \bigl[\bigl|\tilde
{Y}^N_{t}-Y^N_{t}\bigr|^2
\bigr] \rightarrow0\qquad \mbox{as } N \rightarrow\infty,
\end{eqnarray*}
where we used Jensen's inequality for the second inequality and
(\ref{l2}) for the convergence in the last line. This proves the theorem.
\end{pf}

If one has convergence of $(W^N,\xi^N)$ to $(W,\xi)$ in distribution
instead of $L^2$ together with
\[
\sup_N {\mathbb{E}} \Bigl[\sup_t \bigl|W^N_t\bigr|^{2 + \varepsilon}
\Bigr]<\infty\quad\mbox{and}\quad \sup_N \bigl\llVert \xi^N \bigr
\rrVert_{\infty} < \infty,\vadjust{\goodbreak}
\]
one can show as in Example~\ref{ExBernoulli} that there exists a
probability space
$(\tilde{\Omega}, \tilde\mathcal{F}, \tilde{\p})$ carrying
$(\tilde{W}^N,\tilde{\xi}^N)$
distributed as $(W^N,\xi^N)$ and $(\tilde{W},\tilde{\xi})$
distributed as $(W,\xi)$
such that
\[
{\mathbb{E}} \Bigl[\sup_t \bigl|\tilde{W}^N_t -
\tilde{W}_t\bigr|^2 \Bigr] \to0 \quad\mbox{and}\quad {\mathbb{E}} \bigl[\bigl|
\tilde{\xi}^N - \tilde{\xi}\bigr|^2 \bigr] \to0 \qquad\mbox{for } N
\to \infty.
\]
In the case where the drivers $f$ and $f^N$ are given as in
\eqref{ftildef} and \eqref{fNtildef}, the following holds.
%
\begin{corollary}
Assume the $W^N$ fulfill \emph{(C1)}, \emph{(C2)} and \emph{(W2)},
but instead of \emph{(C3)}, $(W^N, \xi^N)$ converges in distribution to
$(W,\xi)$
and one has $\sup_N {\mathbb{E}} [\sup_t |W^N_t|^{2+\varepsilon
} ] < \infty$
for some $\varepsilon> 0$
and $\sup_N \llVert \xi^N \rrVert_{\infty} < \infty$.
Furthermore, suppose $f$
and $f^N$ are of the form \eqref{ftildef} and
\eqref{fNtildef}, respectively. Then for $N$ large enough, the $N$th
BS$\Delta$E has a unique solution
$(Y^N,Z^N,M^N)$ and
\[
\sup_t \bigl|Y^N_t - Y_t\bigr| \to0\qquad
\mbox{in distribution for } N \to\infty,
\]
where $(Y,Z)$ is the unique solution of the BSDE \eqref{bsde} with
bounded $Y$.
\end{corollary}

\vspace*{-20pt}

\begin{appendix}\label{app}
\setcounter{equation}{0}
\setcounter{theorem}{0}
\section*{Appendix}

\subsection{\texorpdfstring{Proofs of Section \protect\ref{secsol}}{Proofs of Section 3}}

\begin{pf*}{Proof of Lemma~\ref{lemmaYZM}}
If $(Y^N,Z^N, M^N)$ is a solution of
the $N$th BS$\Delta$E, then
%
\begin{equation}
\label{basis} Y^N_{t^N_i} - f^N
\bigl(t^N_{i+1},Y^N_{t^N_i},
Z^N_{t^N_{i+1}}\bigr)\Delta\bigl\anglel W^N\bigr
\angler_{t^N_{i+1}} + Z^N_{t^N_{i+1}} \Delta
W^N_{t^N_{i+1}} + \Delta M^N_{t^N_{i+1}} =
Y^N_{t^N_{i+1}} .
\end{equation}
Taking conditional expectations on
both sides with respect to $\F^N_{t^N_i}$ gives
\eqref{Yformula}. Multiplying both sides of \eqref{basis} with
$\Delta W^{N,k}_{t^N_{i+1}}$ and taking conditional
expectations with respect to $\F^N_{t^N_i}$ yields
\eqref{Zformula}. Finally, \eqref{Mformula} is a consequence of
\eqref{Yformula} and \eqref{basis}.
\end{pf*}

\begin{pf*}{Proof of Proposition~\ref{propex}}
We prove the proposition by backwards induction. Set $Y^N_T =
\xi^N$, which by assumption (C4) is bounded. Now assume that
there exist $i$ and $(Y^N_t, Z^N_t, M^N_t)$ solving the
BS$\Delta$E \eqref{bdisc} for $t \in[t^N_{i+1},T]$ such that
$(Y^N_t)$ and $(Z^N_t)$ are bounded. By Lemma~\ref{lemmaYZM},
$Z^{N,k}_{t^N_{i+1}}$ must be of the form
\[
Z^{N,k}_{t^N_{i+1}} = \frac{{\mathbb{E}} [Y^N_{t^N_{i+1}} \Delta
W^{N,k}_{t^N_{i+1}}|\F^N_{t^N_i} ]}{\Delta\anglel W^{N}\angler_{t^N_{i+1}}}.
\]
Since by induction hypothesis, $Y^N_{t^N_{i+1}}$ is bounded,
$Z^{N,k}_{t^N_{i+1}}$ is well-defined and bounded.
Next, we try to find $Y^N_{t^N_i} \in L^{\infty}(\mathcal
{F}^N_{t^N_i})$ such that
%
\begin{equation}
\label{Ysolvi} Y^N_{t^N_i} - f^N
\bigl(t^N_{i+1},Y^N_{t^N_i},Z^N_{t^N_{i+1}}
\bigr) \Delta\bigl\anglel W^N\bigr\angler_{t^N_{i+1}}={\mathbb{E}}
\bigl[Y^N_{t^N_{i+1}}|\F^N_{t^N_i} \bigr].
\end{equation}
To do that, we introduce the mapping
$A(\omega,y) := y - f(t^N_{i+1},y,Z^N_{t^N_{i+1}}) \Delta\anglel W^N\angler_{t^N_{i+1}}$. It is
$\mathcal{F}^N_{t^N_i} $-measurable in $\omega$ and continuous in
$y$. Moreover, it satisfies
%
\begin{equation}
\label{Ay} y - \kappa\bigl(1+|y|+ g\bigl(Z^N_{t^N_{i+1}}\bigr)
\bigr) \le A(\omega,y) \le y + \kappa\bigl(1+|y|+ g\bigl(Z^N_{t^N_{i+1}}
\bigr)\bigr)
\end{equation}
for $\kappa= K \max_i \Delta\anglel W^N\angler_{t^N_i} < 1$. So it
follows from Lemma~\ref{lemmaA} below that there
exists an $\F^N_{t^N_i} \otimes\mathcal{B}(\mathbb{R})$-measurable
function $B \dvtx \Omega\times\mathbb{R}
\rightarrow\mathbb{R}$ such that $A(\omega,B(\omega,y)) = y$
for all $(\omega,y) \in\Omega\times\mathbb{R}$. Thus,
\[
Y^N_{t^N_i} = B \bigl(\omega,{\mathbb{E}}
\bigl[Y^N_{t^N_{i+1}}|\F^N_{t^N_i} \bigr]
\bigr) \in L^0\bigl(\mathcal{F}^N_{t^N_i}\bigr)
\]
solves \eqref{Ysolvi}, and since $Y^N_{t^N_{i+1}}$ and $Z^N_{t^N_{i+1}}$
are bounded, it follows from the estimate
\eqref{Ay} that the same is true for $Y^N_{t^N_i}$. Finally, $M^N_0
= 0$ and
\begin{eqnarray*}
\Delta M^N_{t^N_{i+1}} &=& \Delta Y^N_{t^N_{i+1}}
+ f^N\bigl(t^N_{i+1},Y^N_{t^N_i},Z^N_{t^N_{i+1}}
\bigr) \Delta\bigl\anglel W^N\bigr\angler_{t^N_{i+1}} -
Z^N_{t^N_{i+1}} \Delta W^N_{t^N_{i+1}}
\\
&=& Y^N_{t^N_{i+1}} - {\mathbb{E}} \bigl[Y^N_{t^N_{i+1}}|
\F^N_{t^N_i} \bigr] - Z^N_{t^N_{i+1}} \Delta
W^N_{t^N_{i+1}}
\end{eqnarray*}
defines a square-integrable martingale $M^N$ orthogonal to $W^N$ which
is bounded if
$W^N$ is so. This completes the proof.
\end{pf*}

\begin{lemma} \label{lemmaA}
Let $\mathcal{G}$ be a sub-$\sigma$-algebra of $\mathcal{F}$ and $A
\dvtx  \Omega\times\mathbb{R} \to\mathbb{R}$ a
function that is $\mathcal{G}$-measurable in $\omega\in\Omega$ and
continuous in
$y \in\mathbb{R}$. Assume that for every $\omega\in\Omega$, the
set $ \{y \in\mathbb{R} \dvt  A(\omega,y) \in C \}$ is
nonempty and bounded for each nonempty bounded subset
$C$ of $\mathbb{R}$. Then there exists a $\mathcal{G} \otimes
\mathcal{B}(\mathbb{R})$-measurable function $B \dvtx  \Omega\times
\mathbb{R} \to\mathbb{R}$ such that $A(\omega,B(\omega,x)) =
x$ for all $x \in\mathbb{R}$.
\end{lemma}

\begin{pf}
For all $k,l \in\mathbb{N}$,
\[
b_{kl}(\omega) = \inf \bigl\{y \in\mathbb{R} \dvt  A(\omega,y)
\in\bigl((k-1)2^{-l}, k 2^{-l}\bigr] \bigr\}
\]
is a $\mathcal{G}$-measurable mapping from $\Omega$ to
$\mathbb{R}$ and
\[
B_l(\omega,x) = \sum_{k \in\mathbb{Z}}
b_{kl}(\omega) 1_{ \{(k-1) 2^{-l} < x \le k 2^{-l} \}}
\]
a $\mathcal{G} \otimes\mathcal{B}(\mathbb{R})$-measurable map from
$\Omega\times\mathbb{R}$ to $\mathbb{R}$ such that
\[
B_l(\omega,x) \to B(\omega,x)\qquad \mbox{as } l \to\infty
\]
for a $\mathcal{G} \otimes\mathcal{B}(\mathbb{R})$-measurable
function $B : \Omega\times\mathbb{R} \to\mathbb{R}$. Since
$y \mapsto A(\omega,y)$ is continuous for all $\omega\in
\Omega$, one obtains
\[
A\bigl(\omega, B(\omega,x)\bigr) = A\Bigl(\omega, \lim_{l\to\infty}
B_l(\omega,x)\Bigr) = \lim_{l\to\infty} A\bigl(\omega,
B_l(\omega,x)\bigr) = x
\]
for all $x \in\mathbb{R}$.
\end{pf}

\subsection{\texorpdfstring{Proofs of Lemmas \protect\ref{lemmacomp1} and \protect\ref{lemmacomp2}}
{Proofs of Lemmas 4.3 and 4.4}}

To prove Lemmas~\ref{lemmacomp1} and~\ref{lemmacomp2}, we need the following
lemma.
%
\begin{lemma} \label{lemmadetex}
Assume the $N$th driver and terminal condition are of the special form
\[
f^N(t,y,z) = K\bigl(1 + \llvert y\rrvert + g(z)\bigr)\quad \mbox{and}\quad
\xi^N = C
\]
for constants $C,K \in\mathbb{R}_+$ and a measurable function
$g \dvtx  \mathbb{R}^d \to\mathbb{R}$ with $g(0) = 0$. Then for all
$N \in\mathbb{N}$ such that $\max_i \Delta\anglel W^N\angler_{t^N_i} < 1/K$,
the $N$th BS$\Delta$E has a unique solution $(Y^N,Z^N,M^N)$ given by
%
\begin{equation}
\label{detsol} Y^N_T = C ,\qquad Y^N_{t^N_i}
= \frac
{Y^N_{t^N_{i+1}} +
K \Delta\anglel W^N\angler_{t^N_{i+1}}}{1 - K
\Delta\anglel W^N\angler_{t^N_{i+1}}} ,\qquad Z^N_{t^N_i} = 0 ,\qquad M^N_{t^N_i}
= 0.
\end{equation}
In particular, $Y^N$ is deterministic and for $N \to\infty$,
converges uniformly to
the function
\[
(C+1) \exp\bigl(K(T-t)\bigr) - 1.
\]
\end{lemma}

\begin{pf}
Since the terminal condition and the increments $\Delta\anglel W^N\angler_{t^N_i}$ are deterministic,
$Z^N$ and $M^N$ are both zero and $Y^N$ solves
%
\begin{equation}
\label{deteq} Y^N_{t^N_i} = Y^N_{t^N_{i+1}} +
K\bigl(1+ \bigl|Y^N_{t^N_i}\bigr|\bigr) \Delta \bigl\anglel
W^N\bigr\angler_{t^N_{i+1}} ,\qquad Y^N_T = C.
\end{equation}
This shows \eqref{detsol}. Moreover, since \eqref{deteq} are
deterministic difference equations with Lipschitz coefficients,
one obtains from Theorem~\ref{thmBriand} that their solutions converge
uniformly to the solution of the ordinary
differential equation
\[
y'(t) = - K\bigl(1+ \bigl|y(t)\bigr|\bigr) ,\qquad y(T) = C,
\]
given by
%
\begin{eqnarray}
y(t) = (C+1) \exp\bigl(K(T-t)\bigr) -1.\nonumber\\[-17pt]
\eqntext{\qed}\end{eqnarray}
\noqed\end{pf}

\begin{pf*}{Proof of Lemma~\ref{lemmacomp1}}
Since $\max_i \Delta
\anglel W^N\angler_{t^N_i} < 1/K$ for $N$ large enough, it follows from
Lemma~\ref{lemmadetex} that there exists an $N_1 \ge1$ such
that for all $N \ge N_1$, the BS$\Delta$E with driver
$\hat{f}^N(t,y,z) = K(1 + \llvert y\rrvert  + \llvert z\rrvert^q)$ and terminal
condition $\hat{\xi}^N = C$ has a deterministic solution
$\hat{Y}^N$ that is bounded by $(C+1) \exp(K(T-t))$. Set
$D = 2(C+1) \exp(KT)$. Since $q < 2$, one obtains from condition
(C5) that there exists $N_0 \ge N_1$ such that
%
\begin{equation}
\label{choose1} \sup_{N \ge N_0} \max_i K \bigl(1+
d^{q/2} D^q \bigl[\Delta\bigl\anglel W^N\bigr
\angler_{t^N_i} \bigr]^{- q/2} \bigr) \Delta \bigl\anglel
W^N\bigr\angler_{t^N_i} < 1
\end{equation}
and
%
\begin{equation}
\label{choose2} \sup_{N \ge
N_0} \max_{i,k} d q K \bigl(1 +
d^{q/4} D^{q/2} \bigl[\Delta\bigl\anglel W^N\bigr
\angler_{t^N_i} \bigr]^{- q/4} \bigr) \bigl\llVert \Delta
W^{N,k}_{t^N_i} \bigr\rrVert_{\infty} \le1.
\end{equation}
Fix $N \ge N_0$ and let $f^N_1 \ge f^N_2$ be drivers
and $\xi^N_1 \ge\xi^N_2$ terminal conditions satisfying assumptions (i)--(iv)
of Lemma~\ref{lemmacomp1}. By Proposition~\ref{propex}, both
BS$\Delta$Es have
a solution $(Y^N_m, Z^N_m, M^N_m)$, $m=1,2$, and
\eqref{statlemma} clearly holds at the final time $T$. We now
go backwards in time and assume \eqref{statlemma} is true on
$[t_{i+1}, T]$. Then
%
\begin{equation}
\label{boundYi1} (C+1) \exp\bigl(K\bigl(T-{t^N_{i+1}}\bigr)
\bigr) \ge Y^N_{1,t^N_{i+1}} \ge Y^N_{2,t^N_{i+1}}
\ge- (C+1) \exp\bigl(K\bigl(T-{t^N_{i+1}}\bigr)\bigr).
\end{equation}
By
Lemma~\ref{lemmaYZM}, one has
%
\begin{equation}
\label{Zform} Z^{N,k}_{m,t^N_{i+1}} = \frac{{\mathbb{E}} [Y^N_{m,t^N_{i+1}}
\Delta W^{N,k}_{t^N_{i+1}} | \mathcal{F}^N_{t^N_i} ]}{\Delta
\anglel W^{N}\angler_{t^N_{i+1}}},
\end{equation}
and
\[
Y^N_{m,t^N_i} = {\mathbb{E}} \bigl[Y^N_{m,t^N_{i+1}}|
\mathcal {F}^N_{t^N_i} \bigr] +f^N_m
\bigl(t^N_{i+1},Y^N_{m,t^N_i},Z^N_{m,t^N_{i+1}}
\bigr) \Delta\bigl\anglel W^N\bigr\angler_{t^N_{i+1}}.
\]
Set
\[
Y^N_t := Y^N_{1,t} -
Y^N_{2,t} ,\qquad Z^N_t :=
Z^N_{1,t} - Z^N_{2,t}.
\]
By \eqref{boundYi1}, $Y^N_{1,t^N_{i+1}}$, $Y^N_{2,t^N_{i+1}}$
and $Y^N_{t^N_{i+1}}$ are bounded by $D$ and
\[
Y^N_{t^N_i} = {\mathbb{E}} \bigl[Y^N_{t^N_{i+1}}
\mid\mathcal {F}^N_{t^N_{i+1}} \bigr] + \bigl(\alpha+
Y^N_{t^N_i} \beta+ Z^N_{t^N_{i+1}} \gamma
\bigr) \Delta\bigl\anglel W^N\bigr\angler_{t^N_{i+1}}
\]
for
\begin{eqnarray*}
\alpha&=& f^N_1\bigl(t^N_{i+1},Y^N_{2,t^N_i},Z^N_{2,t^N_{i+1}}
\bigr) - f^N_2\bigl(t^N_{i+1},Y^N_{2,t^N_i},Z^N_{2,t^N_{i+1}}
\bigr),
\\
\beta&=& \frac{1}{Y^N_{t^N_i}} \bigl(f^N_1
\bigl(t^N_{i+1},Y^N_{1,t^N_i},Z^N_{2,t^N_{i+1}}
\bigr) - f^N_1\bigl(t^N_{i+1},Y^N_{2,t^N_i},Z^N_{2,t^N_{i+1}}
\bigr) \bigr),
\\
\gamma^k &=& \frac{1}{Z^{N,k}_{t_{i+1}}} \bigl( f^N_1
\bigl(t^N_{i+1},Y^N_{1,t^N_i},Z^{N,1}_{1,t^N_{i+1}},
\ldots, Z^{N,k}_{1,t^N_{j+1}},Z^{N,k+1}_{2,t^N_{j+1}},\ldots,
Z^{N,d}_{2,t^N_{j+1}}\bigr)
\\
&&\hspace*{26.5pt} {}- f^N_1 \bigl(t^N_{i+1},Y^N_{1,t^N_i},Z^{N,1}_{1,t^N_{i+1}},
\ldots, Z^{N,k-1}_{1,t^N_{i+1}},Z^{N,k}_{2,t^N_{i+1}},\ldots,
Z^{N,d}_{2,t^N_{i+1}}\bigr) \bigr).
\end{eqnarray*}
It can be seen from
\eqref{Zform} that for $m=1,2$,
%
\begin{equation}
\label{boundZm} \hspace*{-10pt}\bigl\llvert Z^N_{m,t^N_{i+1}}\bigr
\rrvert^2 = \sum_{k=1}^d
\bigl(Z^{N,k}_{m,t^N_{i+1}} \bigr)^2 \le\sum
_{k=1}^d \frac{{\mathbb{E}} [(Y^N_{m,t^N_{i+1}})^2 \mid\mathcal
{F}^N_{t^N_i} ]
{\mathbb{E}} [(\Delta W^{N,k}_{t^N_{i+1}})^2 ]}{(\Delta
\anglel W^N\angler_{t^N_{t^N_{i+1}}})^2} \le\frac{d D^2}{\Delta
\anglel W^N\angler_{t^N_{i+1}}}.
\end{equation}
So by assumption (iii) and
\eqref{choose1},
\begin{eqnarray*}
 \bigl\llvert \beta\Delta\bigl\anglel W^N\bigr\angler_{t^N_{i+1}}
\bigr\rrvert &\le& K\bigl(1 + \bigl|Z^N_{2,t^N_{i+1}}\bigr|^q\bigr)
\Delta\bigl\anglel W^N\bigr\angler_{t^N_{i+1}}
\\
&\le& K \bigl(1+ d^{q/2} D^q \bigl[\Delta\bigl\anglel
W^N\bigr\angler_{t^N_{i+1}}\bigr]^{- q/2} \bigr) \Delta
\bigl\anglel W^N\bigr\angler_{t^N_{i+1}} < 1.
\end{eqnarray*}
Hence,
%
\begin{equation}
\label{repY} Y^N_{t^N_i} = \frac{ {\mathbb{E}}
[Y^N_{t^N_{i+1}}| \mathcal{F}^N_{t^N_i} ] + (\alpha+
Z^N_{t^N_{i+1}} \gamma) \Delta
\anglel W^N\angler_{t^N_{i+1}}}{1 - \beta\Delta
\anglel W^{N}\angler_{t^N_{i+1}}}.
\end{equation}
From assumption (iv) and
\eqref{boundZm} one obtains
\begin{eqnarray*}
|\gamma| \le d^{1/2} q K \bigl(1 + \bigl(\bigl\llvert
Z^N_{1,t^N_{i+1}}\bigr\rrvert \vee\bigl\llvert Z^N_{2,t^N_{i+1}}
\bigr\rrvert \bigr)^{q/2} \bigr) \le d^{1/2} q K \bigl(1 +
d^{q/4} D^{q/2} \bigl(\Delta\bigl\anglel W^N\bigr
\angler_{t^N_{i+1}}\bigr)^{-
q/4} \bigr)
\end{eqnarray*}
and from \eqref{Zform},
\[
\bigl|Z^N_{t^N_{i+1}}\bigr| \le d^{1/2} \max_k
\frac{\llVert \Delta W^{N,k}_{t^N_{i+1}} \rrVert_{\infty}}{\Delta
\anglel W^N\angler_{t^N_{i+1}}} {\mathbb{E}} \bigl[\bigl|Y^N_{t^N_{i+1}}\bigr| \mid
\mathcal{F}^N_{t^N_i} \bigr].
\]
By \eqref{choose2}, this yields
\begin{eqnarray*}
\bigl|Z^N_{t^N_{i+1}} \gamma\bigr| \Delta\bigl\anglel W^{N}
\bigr\angler_{t^N_{i+1}}& \le&\bigl|Z^N_{t^N_{i+1}}\bigr | |\gamma| \Delta
\bigl\anglel W^{N}\bigr\angler_{t^N_{i+1}}
\\
&\le& d q K \bigl(1 + d^{q/4} D^{q/2} \bigl(\Delta\bigl\anglel
W^N\bigr\angler_{t^N_{i+1}}\bigr)^{- q/4} \bigr)
\max_k \bigl\llVert \Delta W^{N,k}_{t^N_{i+1}} \bigr
\rrVert_{\infty} {\mathbb{E}} \bigl[\bigl|Y^N_{t^N_{i+1}}\bigr| \mid
\mathcal{F}^N_{t^N_i} \bigr]
\\
&\le &{\mathbb{E}} \bigl[\bigl|Y^N_{t^N_{i+1}}\bigr| \mid\mathcal
{F}^N_{t^N_i} \bigr].
\end{eqnarray*}
Since $Y^N_{t^N_{i+1}} \ge0$ and $\alpha\ge0$, it follows
from \eqref{repY} that $Y^N_{1,t^N_i} - Y^N_{2,t^N_i} =
Y^N_{t^N_i} \ge0$. Now observe that $\hat{f}^N$ satisfies
assumptions (ii)--(iv). So the same argument applied to the
equations corresponding to $(\hat{f}^N,C)$ and $(f^N_1,\xi^N)$ gives
\[
(C+1) \exp\bigl(K\bigl(T-{t^N_i}\bigr)\bigr) \ge
\hat{Y}^N_{t^N_i} \ge Y^N_{1,t^N_i}.
\]
Analogously, one deduces
\[
Y^N_{2,t^N_i} \ge(C+1) \exp\bigl(K\bigl(T-{t^N_i}
\bigr)\bigr),
\]
and the induction step is complete.
\end{pf*}

\begin{pf*}{Proof of Lemma~\ref{lemmacomp2}}
For $N$ large enough, one has
%
\begin{equation}
\label{deltaK} \max_i \Delta\bigl\anglel W^N\bigr
\angler_{t^N_i} < 1/K.
\end{equation}
So it follows from Lemma~\ref{lemmadetex} that there exists
$N_1 \in\mathbb{N}$ such that for all $N \ge N_1$, the BS$\Delta$E with
driver $\hat{f}^N(t,y,z) = K(1 + \llvert y\rrvert  + \llvert z\rrvert^q)$ and
terminal condition $\hat{\xi}^N = C$ has a deterministic
solution $\hat{Y}^N$ dominated by $(C+1) \exp(K(T-t))$.
Choose $N_0 \geq N_1$ such that for all $N \ge N_0$, the
statement of Lemma~\ref{lemmacomp1} holds for all terminal
conditions bounded by $(C+1) \exp(KT)$ and drivers satisfying
conditions (ii)--(iv) of Lemma~\ref{lemmacomp1}. Now fix $N \ge
N_0$ and assume $(Y^N, Z^N, M^N)$ is a solution
corresponding to $\xi^N$ and $f^N$ satisfying conditions (i)
and (ii) of Lemma~\ref{lemmacomp2}. Since $\hat{Y}^N_t \le(C+1) \exp
(K(T-t))$, it is
enough to show that
%
\begin{equation}
\label{statlemma3} \hat{Y}^N_{t^N_i} \ge Y^N_{t^N_i}
\ge- \hat{Y}^N_{t^N_i}\qquad \mbox{for all } i.
\end{equation}
By condition (i), \eqref{statlemma3} holds for $t=T$.
For $t < T$ we argue by backwards induction. So let us assume
that \eqref{statlemma3} holds for $t = t^N_{i+1}$. We will only
show $\hat{Y}^N_{t^N_i} \ge Y^N_{t^N_i}$. The second inequality
in (\ref{statlemma3}) follows analogously. From Lemma
\ref{lemmaYZM}, we know that
\[
Z^{N,k}_{t^N_{i+1}} = \frac{{\mathbb{E}} [Y^N_{t^N_{i+1}} \Delta
W^{N,k}_{t^N_{i+1}} |\mathcal{F}^N_{t^N_i} ]}{\Delta\anglel W^N\angler_{t^N_{i+1}}}
\]
and
\[
A\bigl(\omega,Y^N_{t^N_i}\bigr) = {\mathbb{E}}
\bigl[Y^N_{t^N_{i+1}} | \mathcal {F}^N_{t^N_i}
\bigr],
\]
where $A(\omega,y) = y-f(t^N_{i+1},y,Z^N_{t^N_{i+1}}) \Delta
\anglel W^{N}\angler_{t^N_{i+1}}$. Consider the BS$\Delta$E with driver
\[
\tilde{f}^N\bigl(t^N_j,y,z\bigr) = \cases{
K\bigl(1 + \llvert y\rrvert + \llvert z\rrvert^q\bigr), &\quad for $j =
i+1$,
\cr
0, &\quad for $j \neq i+1$ }
\]
and terminal condition $Y^N_{t^N_{i+1}}$. By Lemma
\ref{lemmacomp1}, it has a unique solution $(\tilde{Y}^N,\tilde
{Z}^N, \tilde{M}^N)$, and it is easy to see that
$\tilde{Y}^N_{t^N_{i+1}} = Y^N_{t^N_{i+1}}$. Due to
\eqref{deltaK}, the mapping $\tilde{A}(\omega,y) = y-
\tilde{f}(t^N_{i+1},y,Z^N_{t^N_{i+1}}) \*\Delta
\anglel W^{N}\angler_{t^N_{i+1}}$ is strictly increasing in $y$ and
since $\tilde{f}^N(t^N_{i+1},\cdot\,,\cdot) \ge f^N(t^N_{i+1},\cdot\,,\cdot)$, one
has
\[
\tilde{A} \bigl(\omega, \tilde{Y}^N_{t^N_i}\bigr) = {
\mathbb{E}} \bigl[Y^N_{t^N_{i+1}} | \mathcal{F}^N_{t^N_i}
\bigr] = A \bigl(\omega, Y^N_{t^N_i}\bigr) \ge\tilde{A}\bigl(
\omega, Y^N_{t^N_i}\bigr).
\]
This shows $\tilde{Y}^N_{t^N_i} \ge Y^N_{t^N_i}$. To conclude
the proof, consider the solution $\bar{Y}^N$ of the
BS$\Delta$E with driver $\tilde{f}^N$ and terminal condition
$\hat{Y}^N_{t^N_{i+1}}$. Then $\bar{Y}^N_{t^N_i} =
\hat{Y}^N_{t^N_i}$ and Lemma~\ref{lemmacomp1} yields
$\bar{Y}^N_{t^N_i} \ge\tilde{Y}^N_{t^N_i}$. Consequently,
\[
\hat{Y}^N_{t^N_i} = \bar{Y}^N_{t^N_i} \ge
\tilde{Y}^N_{t^N_i} \ge Y^N_{t^N_i},
\]
which completes the induction step.
\end{pf*}

\subsection{\texorpdfstring{Remaining proofs of Section \protect\ref{secconvergence}}
{Remaining proofs of Section 5}}

\begin{pf*}{Proof of Lemma~\ref{infterm2}}
Set $\tilde{C} = 3C$ and $\tilde{K}= 2K(2C+K+1) (\exp(KT)+1)(T+1)$.
Choose $b \in
\mathbb{R}_+$ such that condition (f3) holds for $a =
(\tilde{C}+1) \exp(\tilde{K}T)$. It follows from \eqref{WNt}
that $\prod_{i=1}^{i_N} (1-K\Delta\anglel W^N\angler_{t^N_i}) \to
\exp
(-KT)$ for $N \to\infty$. So there exists $N_0 \in\mathbb{N}$
such that for all $N \ge N_0$,
\[
\prod_{i=1}^{i_N} \bigl(1- K \Delta\bigl
\anglel W^N\bigr\angler_{t^N_i}\bigr)^{-1} \le
\exp(KT) + 1,\qquad \bigl\anglel W^N\bigr\angler_T\le T+1
\]
and the statement of Theorem~\ref{thmcomp} holds for
$\tilde{C}$ instead of $C$, $\tilde{K}$ instead of $K$ and $L =
K \vee b$. Set $D = (\exp(KT)+ 1)(T+1)$ and fix $N \ge N_0$ as
well as terminal conditions $\xi^N_1$, $\xi^N_2$ bounded by $C$
and drivers $f^N_1$, $f^N_2$ satisfying (f1)--(f3) such
that $\Vert f^N_1-f^N_2\Vert_\infty\le K$. Then the parameter pairs
$(f^N_m, \xi^N_m)$, $m=1,2$, and $(\tilde{f}^N,\tilde{\xi}^N)$,
where $\tilde{f}^N = f^N_2 + \Vert f^N_1-f^N_2\Vert_\infty$ and
$\tilde{\xi}^N = \xi^{N}_2+\Vert\xi^N_1-\xi^N_2\Vert_\infty$, satisfy
the conditions of Theorem~\ref{thmcomp} for $\tilde{C}$ instead
of $C$, $\tilde{K}$ instead of $K$ and $L = K \vee b$.
Therefore, the corresponding BS$\Delta$Es have unique
solutions, which, since $\tilde{f}^N \ge f^N_1$ and
$\tilde{\xi}^N \ge\xi^N_1$, satisfy $\tilde{Y}^N_t \ge
Y^N_{1,t}$ for all $t$. Note that the solution of the
deterministic BS$\Delta$E
%
\begin{eqnarray}
\label{BSDEhat} %
\hat{Y}^N_{t^N_i} &=&
\hat{Y}^N_{t^N_{i+1}} +\bigl(\bigl\Vert f^N_1-f^N_2\bigr\Vert_\infty+K
\hat{Y}^N_{t^N_i}\bigr)\Delta\bigl\anglel W^N\bigr
\angler_{t^N_{i+1}},\nonumber
\\[-9pt]\\[-9pt]
\hat{Y}^N_T &=& \bigl\Vert\xi^N_1-
\xi^N_2\bigr\Vert_\infty,
\nonumber
\end{eqnarray}
is given by
\begin{eqnarray*}
\hat{Y}^N_{t^N_i}= \frac{\Vert\xi^N_1-\xi^N_2\Vert_\infty}{\prod_{j=i+1}^{i_N}
(1-K\Delta\anglel W^N\angler_{t^N_j})} +\bigl\Vert f^N_1-f^N_2\bigr\Vert_\infty
\sum_{j=i+1}^{i_N} \frac{ \Delta
\anglel W^N\angler_{t^N_j}}{\prod_{l=i+1}^j
(1-K\Delta\anglel W^N\angler_{t^N_l})}.
\end{eqnarray*}
In particular,
$\hat{Y}^N_t$ is positive and decreasing in $t$, and it
satisfies
\[
\hat{Y}^N_{t^N_i} \le \frac{\Vert\xi^N_1-\xi^N_2\Vert_\infty+
\Vert f^N_1-f^N_2\Vert_\infty\sum_{j=i+1}^{i_N} \Delta
\anglel W^N\angler_{t^N_j}}{\prod_{j=i+1}^{i_N} (1-K\Delta\anglel W^N\angler_{t^N_j})}.
\]
Hence, by the choice of the constant $D$, one obtains the
estimate
%
\begin{equation}
\label{estD} \sup_t \hat{Y}^N_t=
\hat{Y}^N_0 \le D\bigl(\bigl\Vert\xi^N_1-
\xi^N_2\bigr\Vert_\infty+\bigl\Vert f^N_1-f^N_2\bigr\Vert_\infty
\bigr).
\end{equation}
In
particular, since $\Vert\xi^N_1-\xi^N_2\Vert_\infty\le2C$ and
$\Vert f^N_1-f^N_2\Vert_\infty\le K$, it follows from \eqref{estD}
that
%
\begin{equation}
\label{hatYbound} \sup_t \hat{Y}^N_t \le
(2C+K) \bigl(\exp(KT) + 1\bigr) (T+1).
\end{equation}
Next, notice that the process
\[
\bar{Y}^N_t :=Y^{N}_{2,t} +
\hat{Y}^N_t
\]
satisfies
\begin{eqnarray*}
\bar{Y}^N_{t^N_i} &=& \bar{Y}^N_{t^N_{i+1}} +
\bigl\{f^N_2\bigl(t^N_{i+1},W^N,
Y^{N}_{2,t^N_i},Z^{N}_{2,t^N_{i+1}}\bigr) +
\bigl\Vert f^N_1-f^N_2\bigr\Vert_\infty+K
\hat{Y}^N_{t^N_i} \bigr\} \Delta\bigl\anglel W^N
\bigr\angler_{t^N_{i+1}}
\\[-2pt]
&& {}-Z^{N}_{2,t^N_{i+1}}\Delta W^N_{t^N_{i+1}}-
\Delta M^{N}_{2,t^N_{i+1}},
\\[-2pt]
\bar{Y}^N_T&=&\xi^{N}_2 + \bigl\Vert
\xi^N_1-\xi^N_2\bigr\Vert_\infty,
\end{eqnarray*}
and
since $f^N_2$ is $K$-Lipschitz in $y$, one has
\[
f^N_2\bigl(t^N_{i+1},W^N,
\bar{Y}^{N}_{t^N_i},Z^{N}_{2,t^N_{i+1}}\bigr) \le
f^N_2\bigl(t^N_{i+1},W^N,Y^{N}_{2,t^N_i},
Z^N_{2,t^N_{i+1}}\bigr)+K\hat{Y}^N_{t^N_i}.
\]
Hence,
\[
\alpha_{t^N_i} = f^N_2\bigl(t^N_{i+1},
W^N,Y^N_{2,t^N_i}, Z^N_{2,t^N_{i+1}}
\bigr) - f^N_2\bigl(t^N_{i+1},W^N,
\bar{Y}^N_{t^N_i},Z^{N}_{2,t^N_{i+1}}\bigr) + K
\hat{Y}^N_{t^N_i} \ge0
\]
and $\bar{Y}^N$ satisfies the BS$\Delta$E
%
\begin{eqnarray}
\label{BSDEYbar} %
\bar{Y}^{N}_{t^N_i} &=&
\bar{Y}^N_{t^N_{i+1}} + \bigl\{f^N_2
\bigl(t^N_{i+1}, W^N, \bar{Y}^N_{t^N_i},Z^N_{2,t^N_{i+1}}
\bigr) + \bigl\Vert f^N_1-f^N_2\bigr\Vert_\infty+
\alpha_{t^N_i} \bigr\} \Delta\bigl\anglel W^N\bigr
\angler_{t^N_{i+1}}
\nonumber
\\
&&{} - Z^N_{2,t^N_{i+1}} \Delta W^N_{t^N_{i+1}} -
\Delta M^N_{2,t^N_{i+1}},
\\
\bar{Y}^N_T &=& \xi^N_2 + \bigl\Vert
\xi^N_1-\xi^N_2\bigr\Vert_\infty.
\nonumber
\end{eqnarray}
Since $f^N_2$ is $K$-Lipschitz in $y$, one obtains from the
estimate \eqref{hatYbound} that
\begin{eqnarray*}
\llVert \alpha_{t^N_i} \rrVert_\infty &\le& \bigl\Vert f^N_2\bigl(t^N_{i+1},
W^N,Y^N_{2,t^N_i}, Z^N_{2,t^N_{i+1}}
\bigr) - f^N_2\bigl(t^N_{i+1},
W^N,\bar{Y}^N_{t^N_i},Z^N_{2,t^N_{i+1}}
\bigr) \bigr\Vert_\infty + K \bigl\llVert \hat{Y}^N_{t^N_i}
\bigr\rrVert_\infty
\\
&\le& 2K \bigl\llVert \hat{Y}^N_{t^N_i} \bigr
\rrVert_\infty\le2K (2C + K) \bigl(\exp(KT)+1\bigr) (T+1),
\end{eqnarray*}
which shows that the BS$\Delta$E
\eqref{BSDEYbar} satisfies the assumptions of Theorem
\ref{thmcomp} for $\tilde{C}$, $\tilde{K}$ and $L = K \vee b$.
Hence, a comparison of $\tilde{Y}^N$ to $\bar{Y}^N$ yields
\[
Y^N_{1,t} \le\tilde{Y}^N_t \le
\bar{Y}^N_t = Y^N_{2,t}+
\hat{Y}^N_t \le Y^N_{2,t}+D \bigl(\bigl\Vert
\xi^N_1-\xi^N_2\bigr\Vert_\infty+
\bigl\Vert f^N_1-f^N_2\bigr\Vert_\infty
\bigr)
\]
for all $t$. By symmetry, one also has
\[
Y^{N}_{2,t} \le Y^{N}_{1,t} + D
\bigl(\bigl\Vert f^N_1-f^N_2\bigr\Vert_\infty+
\bigl\Vert\xi^N_1-\xi^N_2\bigr\Vert_\infty
\bigr)
\]
for all $t$, and the proof is complete.
\end{pf*}

\begin{pf*}{Proof of Lemma~\ref{l3b}}
Let $C \in\mathbb{R}_+$ such
that $\varphi$ is bounded by $C$ and $|\varphi(w_1) -
\varphi(w_2)| \le C \sup_{1 \le i \le n} |w_1(s_i) - w_2(s_i)|$
for all $w_1,w_2 \in\mathbb{R}^{d \times n}$. Choose $N_0 \in
\mathbb{N}$ and $D \in\mathbb{R}_+$ such that for all $N \ge
N_0$, $\sup_i |\Delta W^N_{t^N_i}| \le1$ and the statement of
Lemma~\ref{infterm2} holds. From Lemma~\ref{lemmaYZM}, we know that
\[
Z^{N,k}_{t^N_i}=\frac{
{\mathbb{E}} [Y^N_{t^N_i}\Delta W^{N,k}_{t^N_i}|\F^N_{t^N_{i-1}} ]}{\Delta\anglel W^N\angler_{t^N_i}},
\]
and since $Y^N_{t^N_i}$ is $\F^N_{t^N_i}$-measurable, it can be
written as
\[
Y^N_{t^N_i}= y^N_i
\bigl(W^N_{t^N_{1}}, \ldots, W^N_{t^N_i}\bigr)
\]
for a Borel measurable function $y^N_i \dvtx  \mathbb{R}^{d \times
i} \to\mathbb{R}$. We want to show that $y^N_i$ can be chosen
uniformly Lipschitz-continuous in the last argument. To do that,
let us condition on $W^N_{t_j} = w(t^N_j)$, $j = 1, \ldots, i-1$
and $W^N_{t^N_i} = x$. Denote $\tilde{W}^N_t =
W^N_t-W^N_{t^N_i}$, $t\in[t^N_i,T]$, and define $r = \max\{m \dvt
s_m \le t^N_i\}$. Then for $t^N_j \ge t^N_i$, the conditioned
BS$\Delta$E with solution $(Y^{N,x},Z^{N,x},M^{N,x})$ can
be written as
%
\begin{eqnarray}
\label{condBSDE} %
Y^{N,x}_{t^N_j} &=&
Y^{N,x}_{t^N_{j+1}}+ f^N\bigl(t^N_{j+1},w
\bigl(t^N_1\bigr), \ldots,w\bigl(t^N_{i-1}
\bigr), x + \tilde{W}^N, Y^{N,x}_{t^N_j},Z^{N,x}_{t^N_{j+1}}
\bigr)\Delta\bigl\anglel \tilde{W}^N\bigr\angler_{t^N_{j+1}}
\nonumber
\\
&&{} - Z^{N,x}_{t^N_{j+1}}\Delta\tilde{W}^N_{t^N_{j+1}}-
\Delta M^{N,x}_{t^N_{j+1}},
\\
Y^{N,x}_T &=& \varphi\bigl(w(s_1),
\ldots,w(s_r), x + \tilde{W}^N_{s_{r+1}}, \ldots,x +
\tilde{W}^N_{s_n}\bigr).
\nonumber
\end{eqnarray}
Thus, for $t\geq t^N_i$ we he have $Y^{N,x}_t =
\bar{Y}^{N,x}_{t}$, where $\bar{Y}^{N,x}$ solves the
BS$\Delta$E driven by the processes $W^N$ with terminal
conditions $\xi^{N,x} = \varphi(w(s_1),\ldots,w(s_r), x +
W^N_{s_{r+1}}-W^N_{t^N_i}, \ldots,x + W^N_{s_n}-W^N_{t^N_i})$
and drivers
\begin{eqnarray*}
&&\bar{f}^{N,x}\bigl(t,w\bigl(t^N_1\bigr),
\ldots,w\bigl(t^N_{i-1}\bigr),W^N,y,z\bigr)
\\
&&\quad = \cases{ f^N\bigl(t ,w\bigl(t^N_1\bigr),
\ldots,w\bigl(t^N_{i-1}\bigr), x + W^N-W^N_{t^N_i},
y,z\bigr), &\quad for $t > t^N_{i}$,
\cr
0, &\quad for $t \le
t^N_{i}$. }
\end{eqnarray*}
Clearly, all $\bar{f}^N$ are adapted, left-continuous and
satisfy (f1)--(f3). By our Lipschitz assumption on $\varphi$
and $f^N$, one has,
\[
\bigl\Vert\xi^{N,x_1} - \xi^{N,x_2}\bigr\Vert_{\infty} \le C
|x_1-x_2|
\]
and
\[
\bigl\Vert\bar{f}^{N,x_1} - \bar{f}^{N,x_2}\bigr\Vert_{\infty} \le K
|x_1 - x_2|
\]
for all $x_1,x_2 \in\mathbb{R}^d$. In particular,
\[
\bigl\Vert\bar{f}^{N,x_1} - \bar{f}^{N,x_2}\bigr\Vert_{\infty} \le K
\]
if $|x_1 - x_2| \le1$. So one obtains from Lemma
\ref{infterm2} that for all $x_1,x_2 \in\mathbb{R}^d$
satisfying $|x_1 - x_2| \le1$,
\begin{eqnarray*}
\bigl |Y^{N,x_1}_{t^N_i} - Y^{N,x_2}_{t^N_i}\bigr| &\leq&
\sup_{0\leq t\leq T}\bigl|\bar{Y}^{N,x_1}_t - \bar{Y}^{N,x_2}_t\bigr|
\\
&\le& D \bigl(\bigl\llVert \xi^{N,x_1}-\xi^{N,x_2} \bigr
\rrVert_\infty + \bigl\llVert \bar{f}^{N,x_1}-
\bar{f}^{N,x_2} \bigr\rrVert_\infty \bigr)
\\
& \le& D (C+K)|x_1 - x_2|.
\end{eqnarray*}
Note that
\[
{\mathbb{E}} \bigl[y^N_{t^N_i}\bigl(W^N_{t^N_1},
\ldots ,W^N_{t^N_{i-1}},W^N_{t^N_{i-1}}\bigr)
\Delta\bigl\anglel W^N\bigr\angler_{t^N_i} \mid
\F^N_{t^N_{i-1}} \bigr] = 0,
\]
and therefore,
\begin{eqnarray*}
\bigl|Z^{N,k}_{t^N_i}\bigr| &=& \Delta\bigl\anglel W^N\bigr
\angler_{t^N_i}^{-1} \bigl| {\mathbb{E}} \bigl[Y^N_{t^N_i}
\Delta W^{N,k}_{t^N_i}|\F^N_{t^N_{i-1}} \bigr] \bigr|
\\
&=& \bigl| {\mathbb{E}} \bigl[
\bigl(y^N_{t^N_i}\bigl(W^N_{t^N_1},\ldots,W^N_{t^N_{i-1}}, W^N_{t^N_{i-1}}+\Delta
W^{N}_{t^N_i}\bigr)\\
&&\quad\hspace*{7pt} {}- y^N_{t^N_i}\bigl(W^N_{t^N_1},\ldots
,W^N_{t^N_{i-1}},W^N_{t^N_{i-1}}\bigr) \bigr) \Delta W^{N,k}_{t^N_i} |\F^N_{t^N_{i-1}} \bigr] \bigr |/{\bigl(\Delta\bigl\anglel W^N\bigr\angler_{t^N_i}\bigr)}
\\
&\le&{\mathbb{E}} \bigl[\bigl|y^N_{t^N_i}\bigl(W^N_{t^N_1},\ldots
,W^N_{t^N_{i-1}}, W^N_{t^N_{i-1}}+\Delta
W^{N}_{t^N_i}\bigr)\\
&&\quad \hspace*{2pt}{} -y^N_{t^N_i}\bigl(W^N_{t^N_1},\ldots,W^N_{t^N_{i-1}},
W^N_{t^N_{i-1}}\bigr)\bigr|\bigl|\Delta W^{N,k}_{t^N_i}\bigr|  |\F^N_{t^N_{i-1}}
\bigr]/{\bigl(\Delta\bigl\anglel W^N\bigr\angler_{t^N_i}\bigr)}
\\
&\le& D(C+K) \frac{{\mathbb{E}} [ |\Delta W^{N}_{t^N_i} |
|\Delta W^{N,k}_{t^N_i}|  |\F^N_{t^N_{i-1}} ]}{\Delta\anglel W^N\angler_{t^N_i}}
\\
&\le& D(C+K) \frac{{\mathbb{E}} [ |\Delta W^{N}_{t^N_i} |
| \Delta W^{N}_{t^N_i}  | |\F^N_{t^N_{i-1}}
]}{\Delta\anglel W^N\angler_{t^N_i}}= D(C+K)d.
\end{eqnarray*}
\upqed\end{pf*}

\subsection{\texorpdfstring{Remaining proofs of Section \protect\ref{secvex}}
{Remaining proofs of Section 6}}

\begin{pf*}{Proof of Proposition~\ref{gN}}
Set $\bar{C} = (C+1) \exp(KT)$ and denote
\[
a = \sup_{N,i} \frac{\llVert |\Delta W^{N}_{t^N_i}| \rrVert_{\infty
}}{\sqrt {\Delta
\anglel W^{N}\angler_{t^N_i}}} < \infty.
\]
Choose $N_0 \in\mathbb{N}$ such that for all $N \ge N_0$ the
conclusion of Theorem~\ref{thmcomp} holds and
%
\begin{equation}
\label{N0} \sqrt{d} L a \bigl(\Delta\bigl\anglel W^{N}\bigr
\angler_{t^N_i}\bigr)^{1/2} + d^{(2+q)/4} L
\bar{C}^{q/2} a \bigl(\Delta\bigl\anglel W^{N}\bigr
\angler_{t^N_i}\bigr)^{(2-q)/4} < 1.
\end{equation}
Then it follows from Theorem~\ref{thmcomp} that for fixed $N
\ge N_0$, the $N$th BS$\Delta$E has a unique solution
$(Y^N,Z^N,M^N)$ with $|Y^N_t| \le\bar{C}$ for all $t \in
[0,T]$. Now choose an $\mathbb{R}^d$-valued $(\mathcal
{F}^N_t)$-adapted process
$\mu^N$ that is constant on the intervals $(t^N_{i-1}, t^N_i]$ and
satisfies \eqref{dens}. It
follows from the definition of $g^N$ that
\begin{eqnarray*}
Y^N_{t^N_i}&=& \xi^N + \sum
_{j=i+1}^{i_N}f^N\bigl(t^N_j,Y^N_{t^N_{j-1}},
Z^N_{t^N_j}\bigr)\Delta \bigl\anglel W^{N}\bigr
\angler_{t^N_j} - \sum_{j=i+1}^{i_N}
Z^N_{t^N_j}\Delta W^N_{t^N_j} -
\bigl(M^N_{T}-M^N_{t^N_i}\bigr)
\\
&\ge& \xi^N - \sum_{j=i+1}^{i_N}
g^N\bigl(t^N_j,Y^N_{t^N_{j-1}},
\mu^N_{t^N_j}\bigr)\Delta \bigl\anglel W^{N}\bigr
\angler_{t^N_j} -\sum_{j=i+1}^{i_N}
Z^N_{t^N_j} \Delta W^{N,\mu^N}_{t^N_j} -
\bigl(M^N_{T}-M^N_{t^N_i}\bigr).
\end{eqnarray*}
Since $M^N$ is orthogonal to $W^N$, its
components are still martingales under $P^{\mu^N}\!\!$, and one
obtains
%
\begin{equation}
\label{convineq} Y^N_{t^N_i} \ge\mathbb{E}^{\mu^N}
\Biggl[\xi^N- \sum_{j=i+1}^{i_N}
g^N\bigl(t^N_j,Y^N_{t^N_{j-1}},
\mu^N_{t^N_j}\bigr)\Delta\bigl\anglel W^{N}\bigr
\angler_{t^N_j} \Big|\F^N_{t^N_i} \Biggr].
\end{equation}
On the other hand, it
can be shown (see, e.g., Cheridito \textit{et al.}~\cite{8})
that for each $i$ there exists a $\hat{\mu}^N_{t^N_i} \in
L^0(\mathcal{F}^N_{t^N_{i-1}})^d$ such that
\[
f^N\bigl(t^N_i,Y^N_{t^N_{i-1}},Z^N_{t^N_i}
+ z\bigr) - f^N\bigl(t^N_i,Y^N_{t^N_{i-1}},Z^N_{t^N_i}
\bigr) \ge z \hat{\mu}^N_{t^N_i} \qquad\mbox{for all } z \in
\mathbb{R}^d.
\]
Set $\hat{\mu}^N_t = \hat{\mu}^N_{t^N_{i}}$ for $t \in
(t^N_{i-1}, t^N_i].$ Then $\hat{\mu}^N$ is a
left-continuous $\mathbb{R}^d$-valued $(\mathcal{F}^N_t)$-adapted
process satisfying
%
\begin{equation}
\label{Fen} f^N\bigl(t^N_i,Y^N_{t^N_{i-1}},Z^N_{t^N_i}
\bigr) + g^N\bigl(t^N_i,Y^N_{t^N_{i-1}},
\hat{\mu}^N_{t^N_i}\bigr) = \hat{\mu}^N_{t^N_i}
Z^N_{t^N_i} \qquad\mbox{for all } i.
\end{equation}
So if we can show that $\hat{\mu}^N$ satisfies \eqref{dens}
and \eqref{Repsilon}, the equality in \eqref{convineq} becomes
an equality and the proposition is proved. To see that
$\hat{\mu}^N$ satisfies \eqref{dens}, note that it follows
from the Cauchy--Schwarz inequality that
\begin{eqnarray*}
\bigl|Z^{N,k}_{t^N_i}\bigr| &=& \bigl|\bigl(\Delta\bigl\anglel W^{N}
\bigr\angler_{t^N_i}\bigr)^{-1} {\mathbb{E}}
\bigl[Y^N_{t^N_{i-1}} \Delta W^{N,k}_{t^N_i}|
\F^N_{t^N_{i-1}} \bigr]\bigr |
\\
&\le& \Bigl|\bigl(\Delta\bigl\anglel W^{N}\bigr\angler_{t^N_i}
\bigr)^{-1} \sqrt{{\mathbb{E}} \bigl[\bigl|Y^N_{t^N_{i-1}}\bigr|^2
\mid\mathcal {F}^N_{t^N_{i-1}} \bigr]} \sqrt{{
\mathbb{E}} \bigl[ \bigl|\Delta W^{N,k}_{t^N_i}\bigr |^2|
\F^N_{t^N_{i-1}} \bigr]} \Bigr|
\\
&\le& \bar{C} \bigl(\Delta\bigl\anglel W^{N}\bigr
\angler_{t^N_i}\bigr)^{-1/2},
\end{eqnarray*}
and
therefore,
%
\begin{equation}
\label{Zibound} \bigl|Z^N_{t^N_i}\bigr| \le\sqrt{d} \bar{C} \bigl(\Delta
\bigl\anglel W^{N}\bigr\angler_{t^N_i}\bigr)^{-1/2}.
\end{equation}
From condition
(v) one obtains
\[
\bigl|\hat{\mu}^{N,k}_{t^N_i}\bigr| \le L\bigl(1+\bigl|Z^N_{t^N_i}\bigr|^{q/2}
\bigr) \qquad\mbox{for all } k.
\]
Hence, it follows from estimate \eqref{Zibound} that
\[
\bigl|\hat{\mu}^N_{t^N_i}\bigr| \le\sqrt{d} L\bigl(1+\bigl|Z^N_{t^N_i}\bigr|^{q/2}
\bigr) \le \sqrt{d} L + d^{(2+q)/4} L \bar{C}^{q/2} \bigl(\Delta
\bigl\anglel W^{N}\bigr\angler_{t^N_i}\bigr)^{-q/4}.
\]
This gives
\[
\bigl|\hat{\mu}^{N}_{t^N_{i}} \Delta W^{N}_{t^N_{i}}\bigr|
\le\bigl|\hat{\mu}^N_{t^N_i}\bigr|\bigl |\Delta W^{N}_{t^N_{i}}\bigr|
\le\sqrt{d} L a \bigl(\Delta\bigl\anglel W^{N}\bigr
\angler_{t^N_i}\bigr)^{1/2} + d^{(2+q)/4} L
\bar{C}^{q/2} a \bigl(\Delta\bigl\anglel W^{N}\bigr
\angler_{t^N_i}\bigr)^{(2-q)/4} < 1
\]
and shows that $\hat{\mu}^N$ satisfies condition
\eqref{dens}.

To show \eqref{Repsilon}, we first assume $q=1$. Then one has
\begin{eqnarray*}
 g^N\bigl(t^N_{j},Y^N_{t^N_{j-1}},
\hat{\mu}^N_{t^N_{j}}\bigr) &=& \mathop{\ess\sup}_z \bigl\{\hat{\mu}^N_{t^N_{j}}
z-f^N\bigl(t^N_j,Y^N_{t^N_{j-1}},z
\bigr)\bigr\}
\\
&\ge& \mathop{\ess\sup}_z \bigl\{\hat{\mu}^N_{t^N_{j}}
z-K \bigl(1+\bigl|Y^N_{t^N_{j-1}}\bigr|+|z|\bigr)\bigr\}.
\end{eqnarray*}
It follows that
\[
\bigl|\hat{\mu}^{N,k}_{t^N_j}\bigr| \le K \qquad\mbox{for all } k =1, \ldots, d,
\]
and it is clear that $\hat{\mu}^N$ satisfies condition
\eqref{Repsilon}. If $q \in(1,2)$, denote $|x|_q =
(\sum_{i=1}^d |x_i|^q)^{1/q}$, and observe that there exist
constants $C_1,C_2,C_3 > 0$ such that
%
\begin{eqnarray}\label{gmu}
g^N\bigl(t^N_{j},Y^N_{t^N_{j-1}},
\hat{\mu}^N_{t^N_{j}}\bigr) &=& \mathop{\ess\sup}_z \bigl\{\hat{\mu}^N_{t^N_{j}}
z-f^N\bigl(t^N_{j+1},Y^N_{t^N_{j}},z
\bigr)\bigr\}\nonumber
\\
&\ge& \mathop{\ess\sup}_z \bigl\{\hat{
\mu}^N_{t^N_{j}} z - K \bigl(1+\bigl|Y^N_{t^N_{j}}\bigr|+|z|^q
\bigr)\bigr\}\nonumber\\[-8pt]\\[-8pt]
 &\ge& -K \bigl(1+\bigl|Y^N_{t^N_{j}}\bigr|\bigr)+ \mathop{\ess\sup}_z \bigl\{\hat{\mu}^N_{t^N_{j}} z -
C_1 |z|_q^q\bigr\}\nonumber\\
&=& -K \bigl(1+\bigl|Y^N_{t^N_{j}}\bigr|\bigr) +
C_2\bigl|\hat{\mu}^N_{t^N_{j}}\bigr|^{q/(q-1)}_{q/(q-1)}
\ge -K\bigl(1+\bigl|Y^N_{t^N_{j}}\bigr|\bigr) + C_3\bigl(|
\hat{\mu}_{t^N_{j}}|^2+1\bigr).\nonumber
\end{eqnarray}
Since
\[
Y^N_{t^N_i} = \mathbb{E}^{\hat{\mu}^N} \Biggl[
\xi^N- \sum_{j=i+1}^{i_N}
g^N\bigl(t^N_j,Y^N_{t^N_{j-1}},
\hat{\mu }^N_{t^N_j}\bigr) \Delta\bigl\anglel W^{N}
\bigr\angler_{t^N_j} |\F^N_{t^N_i} \Biggr]
\]
and $\xi^N$ and $Y^N_t$ are bounded by $C$ and $\bar{C}$,
respectively, one obtains
\[
\mathbb{E}^{\hat{\mu}^N} \Biggl[\sum_{j=i+1}^{i_N}
g^N\bigl(t^N_j,Y^N_{t^N_{j-1}},
\hat{\mu }^N_{t^N_j}\bigr) \Delta\bigl\anglel W^{N}
\bigr\angler_{t^N_j} \mid\mathcal {F}^N_{t^N_i} \Biggr]
\le C + \bar{C}.
\]
This together with \eqref{gmu} and the uniform boundedness of
$Y^N$ shows that $\hat{\mu}^N$ fulfills~\eqref{Repsilon}.
\end{pf*}

\begin{lemma}
\label{2mubound} Let $\mu$ be an $(\mathcal{F}^N_t)$-adapted process
that is constant on the intervals $(t^N_{i-1}, t^N_i]$
and satisfies \eqref{dens}. Then one has
\begin{eqnarray*}
&&{\mathbb{E}} \Biggl[\prod_{j = i+1}^{i_N}
\bigl(1 + \mu_{t^N_j} \Delta W^N_{t^N_j}\bigr) \log
\Biggl(\prod_{j = i+1}^{i_N} \bigl(1 +
\mu_{t^N_j} \Delta W^N_{t^N_j}\bigr) \Biggr) \Big|
\F^N_{t^N_i} \Biggr] \\
&&\quad\le {\mathbb{E}}^\mu \Biggl[
\sum_{j=i+1}^{i_N} |\mu_{t^N_{j}}|^{2}
\Delta\bigl\anglel W^N\bigr\angler_{t^N_{j}} |
\F^N_{t^N_i} \Biggr].
\end{eqnarray*}
\end{lemma}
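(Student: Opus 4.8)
The left‑hand side is a conditional form of the relative entropy $H(\p^{\mu}\mid\p)$, and the plan is to derive it from a one‑step estimate chained together with the tower property. Write $\Lambda_j:=1+\mu_{t^N_j}\Delta W^N_{t^N_j}$ and, for $i\le k\le i_N$,
\[
D_{i,k}:=\prod_{j=i+1}^{k}\Lambda_j, \qquad D_{i,i}:=1 .
\]
Since $(W^N_t)$ is a martingale with independent increments and $\mu_{t^N_j}$ is $\F^N_{t^N_{j-1}}$‑measurable (being the value on $(t^N_{j-1},t^N_j]$ of the adapted, piecewise‑constant process $(\mu_t)$), one has $\E{\Lambda_j\mid\F^N_{t^N_{j-1}}}=1$, and hence $\E{D_{i,k}\mid\F^N_{t^N_i}}=1$ for every $k$. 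I would prove by induction on $k$, from $k=i$ up to $k=i_N$, that
\[
\E{D_{i,k}\log D_{i,k}\mid\F^N_{t^N_i}}\;\le\;\E{D_{i,k}\,\dsum_{j=i+1}^{k}|\mu_{t^N_j}|^2\Delta\ang{W^N}_{t^N_j}\;\Big|\;\F^N_{t^N_i}},
\]
the case $k=i$ reading $0\le 0$.

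For the inductive step, use $D_{i,k}=D_{i,k-1}\Lambda_k$ and expand the logarithm, so that
\[
D_{i,k}\log D_{i,k}=D_{i,k-1}\Lambda_k\log D_{i,k-1}+D_{i,k-1}\Lambda_k\log\Lambda_k .
\]
Conditioning on $\F^N_{t^N_{k-1}}$, the first term contributes $D_{i,k-1}\log D_{i,k-1}$ (all factors except $\Lambda_k$ are $\F^N_{t^N_{k-1}}$‑measurable, and $\E{\Lambda_k\mid\F^N_{t^N_{k-1}}}=1$). For the second I would invoke the elementary inequality $(1+x)\log(1+x)\le x+x^2$, valid for $x>-1$ and a consequence of $\log(1+x)\le x$; thus, writing $\mu=\mu_{t^N_k}$ and $\Delta W=\Delta W^N_{t^N_k}$,
\[
\E{\Lambda_k\log\Lambda_k\mid\F^N_{t^N_{k-1}}}\le\E{\mu\Delta W+(\mu\Delta W)^2\mid\F^N_{t^N_{k-1}}}=|\mu|^2\Delta\ang{W^N}_{t^N_k},
\]
since $\E{\mu\Delta W\mid\F^N_{t^N_{k-1}}}=0$ and, expanding $(\mu\Delta W)^2=\dsum_{l,m}\mu^l\mu^m\Delta W^l\Delta W^m$, the off‑diagonal terms vanish by the orthogonality $\E{\Delta W^{N,l}_{t^N_k}\Delta W^{N,m}_{t^N_k}}=0$ for $l\ne m$ from (W2), while the diagonal ones contribute $\dsum_l(\mu^l)^2\Delta\ang{W^N}_{t^N_k}$. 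Hence $\E{D_{i,k}\log D_{i,k}\mid\F^N_{t^N_{k-1}}}\le D_{i,k-1}\log D_{i,k-1}+D_{i,k-1}|\mu|^2\Delta\ang{W^N}_{t^N_k}$. Applying $\E{\,\cdot\mid\F^N_{t^N_i}}$, the tower property, the induction hypothesis, and $\E{D_{i,k}\mid\F^N_{t^N_{k-1}}}=D_{i,k-1}$ together with the $\F^N_{t^N_{k-1}}$‑measurability of $\dsum_{j=i+1}^{k}|\mu_{t^N_j}|^2\Delta\ang{W^N}_{t^N_j}$ (to re‑insert $D_{i,k}$ on the right‑hand side), closes the induction.

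At $k=i_N$ this is the assertion, up to rewriting the right side as a $\p^{\mu}$‑expectation: since $d\p^{\mu}/d\p=D_{0,i_N}=D_{0,i}D_{i,i_N}$ with $D_{0,i}\in\F^N_{t^N_i}$ and $\E{D_{i,i_N}\mid\F^N_{t^N_i}}=1$, the conditional Bayes rule gives $\Emu{X\mid\F^N_{t^N_i}}=\E{D_{i,i_N}X\mid\F^N_{t^N_i}}$ for every $\F^N_T$‑measurable $X$, which I would apply with $X=\dsum_{j=i+1}^{i_N}|\mu_{t^N_j}|^2\Delta\ang{W^N}_{t^N_j}$. The step needing the most care — and where a careless argument could fail — is integrability, as $(\mu_t)$ is not assumed bounded: one uses $x\log x\ge-1/e$ to bound the left side below by an integrable constant, the nonnegativity of the right side (so there is nothing to prove when it equals $+\infty$), and the boundedness of the increments $\Delta W^N_{t^N_j}$ guaranteed by (W2) (so that $\E{|\mu_{t^N_j}\Delta W^N_{t^N_j}|\mid\F^N_{t^N_{j-1}}}<\infty$ almost surely), which makes all the conditional expectations above well defined and the tower‑property and monotonicity manipulations legitimate for random variables bounded below by an integrable one.
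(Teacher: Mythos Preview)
Your proof is correct but follows a different route from the paper's. The paper immediately passes to the measure $\p^{\mu}$: since $D_{i,i_N}$ is the conditional density on $\F^N_T$ given $\F^N_{t^N_i}$, the left-hand side equals $\sum_{j=i+1}^{i_N}\Emu{\log\Lambda_j\mid\F^N_{t^N_i}}$; Jensen's inequality for the concave logarithm then bounds this by $\sum_j\log\Emu{\Lambda_j\mid\F^N_{t^N_i}}$, and a one-line computation of the $\p^{\mu}$-drift of $W^N$ (using the orthogonality in (W2), via $\Emu{\Delta W^{N,k}_{t^N_j}\mid\F^N_{t^N_{j-1}}}=\mu^k_{t^N_j}\Delta\ang{W^N}_{t^N_j}$) gives $\Emu{\Lambda_j\mid\F^N_{t^N_i}}=1+\Emu{|\mu_{t^N_j}|^2\Delta\ang{W^N}_{t^N_j}\mid\F^N_{t^N_i}}$, after which $\log(1+x)\le x$ finishes. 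You instead stay under $\p$ throughout, proving the one-step bound $(1+x)\log(1+x)\le x+x^2$ and chaining it by induction, deferring the change of measure to the final identification of the right-hand side. The paper's argument is shorter and sidesteps the bookkeeping needed to re-insert $D_{i,k}$ on the right at each inductive step; your argument makes the tensorisation structure of the entropy explicit and keeps all manipulations under the reference measure, which is arguably more transparent regarding integrability. Both proofs invoke the orthogonality clause of (W2) at the same place --- computing the second moment of $\mu\,\Delta W$ under $\p$, respectively the mean of $\Delta W^{N,k}$ under $\p^{\mu}$.
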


\begin{pf}
One can write
\begin{eqnarray*}
&& {\mathbb{E}} \Biggl[\prod_{j = i+1}^{i_N}
\bigl(1 + \mu_{t^N_j} \Delta W^N_{t^N_j}\bigr) \log
\Biggl(\prod_{j = i+1}^{i_N} \bigl(1 +
\mu_{t^N_j} \Delta W^N_{t^N_j}\bigr) \Biggr) \Big|
\F^N_{t^N_i} \Biggr]
\\[-2pt]
&&\quad= \sum_{j = i+1}^{i_N} {\mathbb{E}}^\mu
\bigl[\log\bigl(1 + \mu_{t^N_j} \Delta W^N_{t^N_j}
\bigr) | \F^N_{t^N_i} \bigr] \le\sum
_{j = i+1}^{i_N} \log \bigl( {\mathbb{E}}^\mu
\bigl[\bigl(1 + \mu_{t^N_j} \Delta W^N_{t^N_j}\bigr) |
\F^N_{t^N_i} \bigr] \bigr),
\end{eqnarray*}
where the inequality follows from
Jensen's inequality. The right-hand side can be estimated as
follows:
\begin{eqnarray*}
&& \sum_{j = i+1}^{i_N} \log \Biggl\{1+\sum
_{k=1}^d {\mathbb{E}}^\mu
\bigl[\mu^{k}_{t^N_j} {\mathbb {E}}^\mu \bigl[\Delta
W^{N,k}_{t^N_j} |\F^N_{t^N_{j-1}} \bigr] |
\F^N_{t^N_i} \bigr] \Biggr\}
\\[-2pt]
&&\quad= \sum_{j=i+1}^{i_N}\log \Biggl\{1+\sum
_{k=1}^d {\mathbb{E}}^\mu
\bigl[\bigl(\mu^{k}_{t^N_{j}}\bigr)^2 \Delta\bigl
\anglel W^{N}\bigr\angler_{t^N_{j}}|\F^N_{t^N_i}
\bigr] \Biggr\} \le\sum_{j =
i+1}^{i_N} {
\mathbb{E}}^\mu \bigl[|\mu_{t^N_{j}}|^2\Delta\bigl
\anglel W^N\bigr\angler_{t^N_{j}}|\F^N_{t^N_i}
\bigr].
\end{eqnarray*}
The equality holds
because
\[
{\mathbb{E}}^\mu \bigl[\Delta W^{N,k}_{t^N_{j}}|
\F^N_{t^N_{j-1}} \bigr] = \mu^k_{t^N_j} \Delta
\bigl\anglel W^N_{t^N_j}\bigr\angler .
\]
For the inequality we used $\log(1+x)\leq x$.
\end{pf}

\begin{lemma} \label{lemmagronwall}
For all $N \in\mathbb{N}$, let $h^N \dvtx  [0,T] \to\mathbb{R}$ be a function
that is constant on the intervals $[t^N_i, t^N_{i+1})$. If there exist
constants $a,b \in
\mathbb{R}_+$ such that
\[
\bigl|h^N(T)\bigr| \le a \quad\mbox{and}\quad \bigl|h^N\bigl(t^N_i
\bigr)\bigr| \le a + b \sum_{j=i+1}^{i_N}
\bigl|h^N\bigl(t^N_{j-1}\bigr)\bigr| \Delta\bigl\anglel
W^N\bigr\angler_{t^N_{j}} \qquad\mbox{for all } N \mbox{ and } i \le
i_N - 1,
\]
there exists an $N_0 \in\mathbb{N}$ such that
\[
\bigl|h^N\bigl(t^N_i\bigr)\bigr| \le2a\exp\bigl(b
\bigl(T-t^N_i\bigr)\bigr)\qquad \mbox{for all } N \ge
N_0 \mbox{ and } i =0 , \ldots, i_N.
\]
\end{lemma}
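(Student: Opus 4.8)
The plan is to run a standard backward discrete Gronwall argument, using two consequences of \eqref{WNt}: $\varepsilon_N := b\max_i \Delta\ang{W^N}_{t^N_i} \to 0$ and $\eta_N := \sup_{0\le t\le T}|\ang{W^N}_t - t| \to 0$ as $N\to\infty$. I would fix $N$ large enough that $\varepsilon_N<1$, suppress the superscript $N$, and write $x_i := |X^N_{t^N_i}|$ and $\delta_j := \Delta\ang{W^N}_{t^N_j}$. Reindexing the hypothesis via $k=j-1$ gives $x_i \le a + b\sum_{k=i}^{i_N-1} x_k\delta_{k+1}$; isolating the $k=i$ term and dividing by $1-b\delta_{i+1}\ge 1-\varepsilon_N>0$ yields
\[
x_i \;\le\; \tilde a_N + \tilde b_N\sum_{k=i+1}^{i_N-1} x_k\,\delta_{k+1},\qquad
\tilde a_N:=\frac{a}{1-\varepsilon_N},\quad \tilde b_N:=\frac{b}{1-\varepsilon_N},
\]
for all $0\le i\le i_N-1$, where the empty sum at $i=i_N-1$ gives $x_{i_N-1}\le \tilde a_N$.

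Then I would apply the backward Gronwall step: setting $z_i:=\tilde a_N+\tilde b_N\sum_{k=i+1}^{i_N-1}x_k\delta_{k+1}$, one has $x_i\le z_i$, $z_{i_N-1}=\tilde a_N$, and $z_i-z_{i+1}=\tilde b_N x_{i+1}\delta_{i+2}\le \tilde b_N z_{i+1}\delta_{i+2}$, hence $z_i\le z_{i+1}(1+\tilde b_N\delta_{i+2})$. Iterating down to $i_N-1$ and using $1+u\le e^{u}$ gives
\[
x_i \;\le\; \tilde a_N\prod_{j=i+2}^{i_N}(1+\tilde b_N\delta_j)
\;\le\; \tilde a_N\exp\!\Big(\tilde b_N\big(\ang{W^N}_T-\ang{W^N}_{t^N_{i+1}}\big)\Big).
\]
Since $\ang{W^N}$ is nondecreasing and $|\ang{W^N}_s-s|\le\eta_N$ for all $s\in[0,T]$, the exponent is at most $\tilde b_N\big((T-t^N_i)+2\eta_N\big)$, so
\[
x_i \;\le\; \Big(\tilde a_N\,e^{2\tilde b_N\eta_N}\,e^{(\tilde b_N-b)T}\Big)\,\exp\!\big(b(T-t^N_i)\big).
\]

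Finally, the prefactor in parentheses tends to $a$ as $N\to\infty$ (since $\tilde a_N\to a$, $\tilde b_N\to b$, $\eta_N\to 0$), hence is $\le 2a$ for all $N$ beyond some $N_0$, uniformly in $i$; if $a=0$ the bound $x_i\le 0=2a$ is immediate. This establishes the claim for $0\le i\le i_N-1$, and the case $i=i_N$ is trivial: $|X^N_T|\le a\le 2a=2a\exp(b(T-t^N_{i_N}))$. The only points requiring care are the reindexing/absorption bookkeeping in the first display and checking that the limiting prefactor is strictly below $2a$ so that a single $N_0$ works for all $i$ simultaneously; beyond this there is no genuine obstacle, as the mesh shrinking and $\ang{W^N}_t\to t$ uniformly (both from \eqref{WNt}) supply exactly what is needed.
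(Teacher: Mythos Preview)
Your argument is correct. Both you and the paper run a backward discrete Gronwall argument, but the execution differs: the paper introduces the auxiliary deterministic process $\hat X^N_{t^N_i}=a\prod_{j>i}(1-b\Delta\ang{W^N}_{t^N_j})^{-1}$, which solves the corresponding difference \emph{equation}, shows it converges uniformly to $a\exp(b(T-t))$, and then proves $|X^N_{t^N_i}|\le \hat X^N_{t^N_i}$ by one-step backward induction. You instead absorb the self-term $b\,x_i\,\delta_{i+1}$ up front, iterate the resulting inequality directly, and bound the product via $1+u\le e^u$ together with the uniform convergence $\ang{W^N}_t\to t$. Both routes produce a product bound that converges to the desired exponential; the paper's comparison-process formulation is slightly cleaner bookkeeping, while your direct iteration avoids introducing the auxiliary $\hat X^N$ at the cost of the extra absorption step and the $\tilde a_N,\tilde b_N$ constants. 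Your treatment of the edge cases ($i=i_N$ and $a=0$) is also fine.
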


\begin{pf}
For $N$ so large that $\sup_i \Delta\anglel W^N\angler_{t^N_i} <
1/b$, the
function given by
\[
H^N(T) :=a \quad\mbox{and}\quad H^N(t) := a \prod
_{j : t^N_j > t} \bigl(1 - b \Delta\bigl\anglel W^N\bigr
\angler_{t^N_j} \bigr)^{-1},\qquad t < T
\]
solves
\[
H^N\bigl(t^N_i\bigr) = a + b \sum
_{j=i+1}^{i_N} H^N\bigl(t^N_{j-1}
\bigr) \Delta\bigl\anglel W^N\bigr\angler_{t^N_{j}}\qquad \mbox{for
all } i \le i_N - 1,
\]
and converges uniformly to $a \exp(b(T\!-\!t))$. In particular, there
exists an $N_0\!\in\!\mathbb{N}$ such that
\[
H^N(t) \le2 a \exp\bigl(B(T-t)\bigr)\qquad \mbox{for all $t$ and $N \ge
N_0$}.\vadjust{\goodbreak}
\]
So the lemma follows if we can show
that $|h^N(t^N_i)| \le H^N(t^N_i)$ for all $N \ge N_0$ and $i = 0,
\ldots, i_N$.
For $i = i_N$ this is obvious, and if it holds for $j \ge i+1$, then
\begin{eqnarray*}
\bigl|h^N\bigl(t^N_i\bigr)\bigr| &\le&
\frac{a + b \sum_{j=i+2}^{i_N} |h^N(t^N_{j-1})|
\Delta\anglel W^N\angler_{t^N_j}}{1 - b \Delta\anglel W^N\angler_{t^N_{i+1}}}
\\
&\le& \frac{a + b \sum_{j=i+2}^{i_N} |h^N(t^N_{j-1})| \Delta\anglel W^N\angler_{t^N_j}}{1 - b \Delta\anglel W^N\angler_{t^N_{i+1}}} = H^N\bigl(t^N_i
\bigr).
\end{eqnarray*}
\upqed\end{pf}

\end{appendix}

\section*{Acknowledgments}
Financial support from NSF Grant DMS-06-42361 is gratefully
acknowledged.




\printhistory

\end{document}